\newcommand{\entrylabel}[1]{\mbox{\textsf{{\rm c}1}}\hfil}
{\end{list}}
{
   \newtheorem{theorem}{Theorem}[subsection]
   
   \newtheorem{lemma}[theorem]{Lemma}

   \newtheorem{corollary}[theorem]{Corollary}

}
{\theoremstyle{definition}
   
   \newtheorem{example}[theorem]{Example}
   \newtheorem{definition}[theorem]{Definition}
}
{\theoremstyle{remark}
   \newtheorem*{remark}{Remark}
   \newtheorem*{remarks}{Remarks}
}
\newcommand{\Hi}{\cite{Hir}}
\newcommand{\Hii}{\cite{Hir1}}
\newcommand{\Vi}{\cite{Vi}}
\newcommand{\BM}{\cite{BM}}
\newcommand{\cCC}{{\mathbb{C}}}
\newcommand{\CC}{{\mathbb{C}}}
\newcommand{\ZZ}{{\mathbb{Z}}}
\newcommand{\bA}{{\mathbf{A}}}
\newcommand{\cC}{{\mathcal C}}
\newcommand{\cD}{{\mathcal D}}
\newcommand{\cE}{{\mathcal E}}
\newcommand{\cF}{{\mathcal F}}
\newcommand{\cH}{{\mathcal H}}
\newcommand{\cI}{{\mathcal I}}
\newcommand{\cJ}{{\mathcal J}}
\newcommand{\cM}{{\mathcal M}}
\newcommand{\cN}{{\mathcal N}}
\newcommand{\cO}{{\mathcal O}}
\newcommand{\cU}{{\mathcal U}}
\newcommand{\cT}{{\mathcal T}}
\newcommand{\supp}{{\operatorname{supp}}}
\newcommand{\res}{{\operatorname{res}}}
\newcommand{\Sub}{\operatorname{Sub}}
\newcommand{\Spec}{\operatorname{Spec}}
\newcommand{\Der}{{\operatorname{Der}}}
\newcommand{\ord}{{\operatorname{ord}}}
\newcommand{\codim}{\operatorname{codim}}
\newcommand{\Reg}{{\operatorname{Reg}}}
\begin{document}


\title[Effective Hironaka resolution and its Complexity ]{ Effective Hironaka resolution and its Complexity \\(with appendix on applications in positive characteristic)}

\author{{Edward Bierstone},~{Dima Grigoriev},~ {Pierre Milman},~ {Jaros{\l}aw W{\l}odarczyk}}
\thanks{Bierstone's and Milman's research was partially supported by NSERC Discovery Grants OGP 0009070 and OGP 008949}
\thanks{W\l odarczyk was supported in part by NSF grants DMS-0100598 and DMS-1201502}


\address{E. Bierstone, Fields Institute, 222 College Street, Toronto, Ontario, Canada M5T 3J1
and Department of Mathematics, University of Toronto, Toronto, Ontario, Canada M5S 2E4}
\email{bierston@fields.utoronto.ca}

\address{D. Grigoriev, Laboratoire des Math\'ematiques , Universit\'e de Lille I,
59655, Villeneuve d'Ascq, France\newline http://en.wikipedia.org/wiki/Dima\_Grigoriev}

\email{dmitry.grigoryev@math.univ-lille1.fr}

\address{P.D. Milman, Department of Mathematics, University of Toronto, Toronto, Ontario, Canada M5S 2E4}
\email{milman@math.toronto.edu}

\address{J. Wlodarczyk, Department of Mathematics\\Purdue University\\West
Lafayette, IN-47907\\USA}
\email{wlodar@math.purdue.edu}

\date{\today}
\begin{abstract}
Building upon works of Hironaka,
 Bierstone-Milman, Villamayor and W\l odarczyk, we give an
\emph{a priori} estimate for the
complexity of the simplified Hironaka algorithm. As a consequence of this 
result, we show that there exists canonical Hironaka embedded 
desingularization and principalization over fields of large characteristic 
(relative to the degrees of generating polynomials).
 \end{abstract}
\maketitle
\centerline{\it Dedicated to Professor Heisuke Hironaka on the occasion of his 80th birthday}

\bigskip

\tableofcontents
\addtocounter{section}{-1}

\section{Introduction} In this paper we discuss the complexity of  the Hironaka
theorem on resolution of singularities
of a marked ideal.  Recall that approach to the problem of embedded
resolution was originated by Hironaka (see \cite{Hir}) and later
developed and simplified  by Bierstone-Milman  (see \cite{B-M''},~\cite{BM},~\cite{BM1}), 
Villamayor (see \cite{Vi},~\cite{Vi1}), and W\l odarczyk (\cite{Wlod}) and others. In particular, we also use some elements from the recent development by Koll\'ar (\cite{Kollar2}).

It seems easier to estimate the complexity of the resolution algorithm from the
recursive descriptions in W\l odarczyk \cite{Wlod} or Bierstone-Milman
\cite{B-M5} than from the earlier iterative versions. The algorithms
in \cite{Wlod} and \cite{B-M5} (or \cite{BM2}) lead to identical blowing-up
sequences; whether one proof is preferable to the other is partly a matter of taste.
In this article, we estimate the complexity of the ``weak-strong desingularization''
algorithm (see Section 1) using the construction of \cite{Wlod}, though \cite{B-M5}
could also be used (see Remark in this section below). In a
subsequent paper, we plan to use \cite{B-M5} to give a comparable complexity
estimate for the algorithm of ``strong desingularization'' (where the centres of
blowing up are smooth subvarieties of the successive strict transforms).


The basic question which arises is in what terms to estimate 
the complexity \emph{a priori}? 
We recall (see, e.~g., \cite{Wagner}, \cite{G2}) that
the complexity is usually measured as a function of the bit-size of
the input. In particular, in this paper we study varieties and ideals
which are represented by families of polynomials with integer
coefficients, and the vector of all these coefficients (for an
initial variety and an ideal) is treated as an input.  Hironaka's
algorithm consists of many steps of elementary calculations, but
they are organized in several (nested) recursions where the
resolution of an object (a variety or a marked ideal, see below) is
reduced to resolutions of suitable objects with smaller values of
appropriate parameters (like dimension or multiplicity). It is
instructive to represent the Hironaka algorithm as a tree, to each 
node $a$ of which corresponds a marked ideal. The marked ideals which
correspond to child nodes of $a$ have either smaller multiplicity of an
ideal or smaller dimension of a variety. An initial marked ideal
corresponds to the root of the tree. The depth of the tree is
bounded by $2\cdot m$ where $m$ denotes the dimension of the initial
variety, while the number of the nested recursions does not exceed
$m+3$. It appears that just the number of nested recursions is
the overwhelming contribution to the complexity of the Hironaka's
algorithm.

That is why as a relevant language for expressing a complexity bound
we have chosen the Grzegorczyk class $\cE^l,\, l\ge 0$
\cite{Gr}, \cite{Wagner} which consists of (integer) functions whose
construction requires $l$ nested primitive recursions. The classes
$\cE^l,\, l\ge 0$ provide a hierarchy of the set of all
primitive-recursive functions $\cup_{l<\infty} \cE^l$. In
particular, $\cE^2$ contains all (integer) polynomials and
$\cE^3$ contains all finite compositions of the exponential
function. 

As an illustration of complexity bounds from small
Grzegorczyk classes, we give examples of a few algebraic-geometrical
computational problems: Polynomial factoring \cite{G2}, with
polynomial complexity (so in $\cE^2$); finding irreducible
components of a variety, with the exponential complexity (so in
$\cE^3$) \cite{G2}; and constructing a Groebner basis of an ideal,
with double-exponential complexity (so also in $\cE^3$)
\cite{Giusti, Mora}.

The principal complexity result of this paper (Theorem~\ref{main3}) states that the
complexity of resolution of an ideal on an $m$-dimensional variety is bounded by a
function from class $\cE^{m+3}$. We mention also that the complexity of 
Hilbert's Idealbasissatz for polynomial
ideals in $n$ variables (much simpler
from a purely mathematical point of view) belongs to class $\cE^{n+1}$ (cf. \cite{Seidenberg}, \cite{Simpson},
 where the latter was formulated in different languages), and, moreover, the
number $n+1$ is sharp. This shows that these two quite different algorithmic problems
have a common feature in recursion on the dimension which mainly determines their
complexities.

\begin{remark} The main differences between the proofs in \cite{B-M5} and
\cite{Wlod} come from the notions of derivative ideal that are used (\cite{B-M5}
uses only derivatives that preserve the ideal of the exceptional
divisor) and from passage to a ``homogenized ideal'' in \cite{Wlod} (see
\S2.8). The latter has the advantage that any two maximal contact hypersurfaces
for the homogenized ideal are related by an automorphism, while \cite{B-M5}
provides a stronger version of functoriality that is needed for strong
desingularization. Since \cite{B-M5} does not involve homogenization, certain
complexity estimates can be improved (see Remark  after Corollary \ref{number}),
although the overall Grzegorczyk complexity class $\cE^{m+3}$ is unchanged.
\end{remark}

We mention that in \cite{Vladuts, Teitelbaum, Kozen} polynomial complexity algorithms for
resolution of a curve are presented.
Also in \cite{Blanco}, a complexity estimated in $\cE^3$ for an algorithm for 
resolution of singularities in the
non-exceptional monomial case is presented.

In Section 1 below,
we formulate the results on canonical principalization of a
sheaf of ideals and on embedded desingularization. In Section 2 we give
definitions of basic notions 
like marked ideals, hypersurfaces of maximal contact and coefficient ideals,
and we formulate their properties (one can find proofs in \cite{Wlod}).
In Section 3 we describe the resolution algorithm. In Section 4, we provide bounds
on the degrees and on the number of polynomials which describe a single blow-up.
In Section 5 we give some auxiliary  bounds --- on the multiplicity of an 
ideal in terms of degrees of describing  polynomials, on the degree of a 
hypersurface of maximal contact, and on the number of generators 
of the coefficient ideal and their degrees.
Finally in Section 6 we estimate
the complexity of the resolution algorithm in terms of Grzegorczyk's classes (their definition is also provided
in Section 6). 

In the appendix we give some applications of the obtained estimates. We show that resolution of singularities exists in positive characteristic, provided that the characteristic is very large relatively to degree of polynomials describing singularities and their  number.

\subsection{Acknowledgements} We heartily thank the Max Planck Institut fuer 
Mathematik, Bonn for its warm hospitality.

\section{Formulation of the Hironaka  resolution  theorems}
All algebraic varieties in this paper are defined over a  ground field of characteristic zero. The assumption of characteristic zero is only needed for the local existence of a hypersurface of  maximal contact (Lemma \ref{le: Gi}).

We give proofs of the following Hironaka Theorems (see \cite{Hir}):
\begin{enumerate}

\item {\bf Canonical Principalization.}
\begin{theorem} \label{th: 1} Let ${\cI}$ be a sheaf of ideals on a smooth algebraic variety $X$.
There exists a principalization of ${\cI}$; that is, a sequence
$$ X=X_0 \buildrel \sigma_1 \over\longleftarrow X_1
\buildrel \sigma_2 \over\longleftarrow X_2\longleftarrow\ldots
\longleftarrow X_i \longleftarrow\ldots \longleftarrow X_r =\widetilde{X}$$
of blow-ups $\sigma_i:X_{i-1}\leftarrow X_{i}$ of smooth centers $C_{i-1}\subset
 X_{i-1}$,
such that:
\begin{enumerate}
\item The exceptional divisor $E_i$ of the induced morphism $\sigma^i=\sigma_1\circ \ldots\circ\sigma_i:X_i\to X$ has only  simple normal
crossings and $C_i$ has simple normal crossings with $E_i$.
\item The
total transform $\sigma^{r*}({\cI})$ is the ideal of a simple normal
crossing divisor
$\widetilde{E}$ which is  a natural  combination of the irreducible components of the divisor ${E_r}$.
\end{enumerate}
Moreover, the morphism $(\widetilde{X},\widetilde{\cI})\rightarrow(X,{\cI}) $ defined by 
the above principalization  commutes with smooth morphisms and  embeddings 
of ambient varieties. It is equivariant with respect to any group action,
not necessarily preserving the ground field $K$.
\end{theorem}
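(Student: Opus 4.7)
The plan is to translate principalization of $\cI$ into desingularization of the marked ideal $(\cI,1)$ on $X$, where desingularization means producing a canonical sequence of smooth, SNC-compatible blow-ups after which the total transform of $\cI$ has order strictly less than $1$ at every point — equivalently, is locally the ideal of an SNC exceptional divisor. Once framed this way, the problem falls within the marked-ideal formalism of Section 2, and I would prove the existence of canonical resolutions of marked ideals $(\cJ,d)$ by a double induction on the ambient dimension $n=\dim X$ and on the maximum order $\mu=\max_x \ord_x\cJ$. The base cases ($n=0$, or $\mu<d$) are trivial.

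For the inductive step, work locally near a point of the Samuel (top-order) stratum. Lemma~\ref{le: Gi}, which is where characteristic zero is used, supplies a hypersurface of maximal contact $H$ — a smooth hypersurface through the top-order locus along which every admissible blow-up restricts compatibly. Next form the coefficient ideal $\cC(\cJ)$ on $H$: a marked ideal on $H$ with the property that dropping its order below $1$ automatically drops the order of $\cJ$ below $\mu$ on the ambient $X$. Since $\dim H = n-1$, the canonical resolution of $\cC(\cJ)$ is given by the dimension induction, and the resulting smooth centres on $H$ are pushed up to $X$. Iterating and then decreasing $\mu$ terminates the algorithm. Global well-definedness of the centres uses the homogenization device of \S2.8: any two maximal-contact hypersurfaces are related by a local automorphism fixing the homogenized ideal, so the coefficient ideal, and hence the centres, descend from $H$ to $X$ canonically. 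Equivariance and functoriality under smooth morphisms and embeddings then follow because every ingredient — Samuel stratum, derivative ideals, coefficient ideal, homogenization — is itself canonically defined, independent of coordinates and of the ground field.

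To keep the sequence compatible with the simple-normal-crossings requirement on both $E_i$ and each new centre $C_i$, at each stage I would split the current transform of $\cI$ into a factor supported on the already-created exceptional components (which is locally monomial) and a complementary residual marked ideal, and perform blow-ups only along the top-order stratum of the residual part, intersected transversally with $E_i$. The main obstacle, and the technical heart of the argument, lies precisely here: arranging this bookkeeping so that the residual marked ideal behaves well under pullback, so that the order function and the coefficient-ideal construction remain stable along the inductive sequence, and so that transversality with $E_i$ is preserved after each blow-up. This is where the companion/derivative-ideal machinery of \cite{Wlod} (or of \cite{B-M5}) is essential. Once it is in place, termination, canonicity, and both conclusions (a) and (b) of Theorem~\ref{th: 1} follow formally from the double induction sketched above.
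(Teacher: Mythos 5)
Your proposal follows essentially the same route as the paper: principalization is reduced to canonical resolution of the marked ideal $(X,\cI,\emptyset,1)$, which is then constructed by induction on dimension using maximal contact (Lemma~\ref{le: Gi}), homogenized coefficient ideals for canonical descent of the centres, the monomial/nonmonomial split with companion ideals to drop the order, and a separate treatment of the monomial case --- exactly Steps 1--2 of Section~\ref{Resolution algorithm}. The only quibble is a slip of wording: it is the controlled (residual) transform, not the total transform, whose order must drop below $1$; the total transform then becomes the ideal of the SNC exceptional divisor, as you in fact intend.
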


\item {\bf  Weak-Strong Hironaka Embedded  Desingularization.}

\begin{theorem} \label{th: emde} \label{th: 2} Let $Y$ be a subvariety of a smooth variety
$X$ over a field of characteristic zero.
There exists a  sequence $$ X_0=X \buildrel \sigma_1 \over\longleftarrow X_1
\buildrel \sigma_2 \over\longleftarrow X_2\longleftarrow\ldots
\longleftarrow X_i \longleftarrow\ldots \longleftarrow X_r=\widetilde{X}$$ of
blow-ups  $\sigma_i:X_{i-1}\longleftarrow X_{i}$ of smooth centers $C_{i-1}\subset
 X_{i-1}$, such that:
\begin{enumerate}
\item The exceptional divisor $E_i$ of the induced morphism $\sigma^i=\sigma_1\circ \ldots\circ\sigma_i:X_i\to X$ has only  simple normal
crossings and $C_i$ has simple normal crossings with $E_i$.
\item Let $Y_i\subset X_i$ be the strict transform of $Y$. All centers $C_i$ are disjoint from the set $\Reg(Y)\subset Y_i$ of points where   $Y$  (not $Y_i$) is smooth
(and are not necessarily contained in $Y_i$).
\item  The strict transform $\widetilde{Y}:=Y_r$ of  $Y$  is smooth and
has only simple normal crossings with the exceptional divisor $E_r$.
\item The morphism $(X,{Y})\leftarrow (\widetilde{X},\widetilde{Y})$ defined 
by the embedded desingularization commutes with smooth morphisms and  
embeddings of ambient varieties. It is equivariant with respect to any group 
action, not necessarily preserving the ground $K$.
 \end{enumerate}
\end{theorem}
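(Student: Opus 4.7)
The plan is to reduce embedded desingularization of $Y \subset X$ to iterated canonical resolution of marked ideals (the core construction developed in Section 3 following \cite{Wlod}), using multiplicity of the ideal $\cI_Y$ as the driving parameter, followed by one final principalization step to achieve simple normal crossings with the accumulated exceptional divisor. The key identity to exploit is that for $x \in Y$, the order $\ord_x(\cI_Y)$ is $1$ exactly when $Y$ is smooth at $x$, and $\geq 2$ otherwise; in particular, whenever $\mu := \max_x \ord_x(\cI_Y) \geq 2$, the top-multiplicity locus $\Sigma_\mu := \{x : \ord_x(\cI_Y) = \mu\}$ lies in $\Sing(Y)$.

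The first phase (multiplicity reduction) proceeds as follows. Starting from the marked ideal $(\cI_Y, \mu_0)$ with $\mu_0 := \max_x \ord_x(\cI_Y)$, I apply the canonical resolution of marked ideals on $(X, \emptyset)$ to obtain a sequence of smooth SNC blow-ups $\sigma_1, \ldots, \sigma_{k_0}$ whose centres are contained in the successive top-multiplicity loci of (the transforms of) $\cI_Y$, after which $\max \ord(\cI_Y) < \mu_0$. Tracking the strict transform $Y_i$, the morphism $\sigma^i$ is an isomorphism over $\Reg(Y)$ at every stage, so all centres automatically lie over $\Sigma_{\mu_0} \subset \Sing(Y)$ and are therefore disjoint from $\Reg(Y) \subset Y_i$. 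Iterate: replace $(Y, X)$ by $(Y_{k_0}, X_{k_0})$, read off $\mu_1 < \mu_0$, and resolve $(\cI_{Y_{k_0}}, \mu_1)$. After at most $\mu_0$ rounds the maximal multiplicity drops to $1$, i.e.\ the current strict transform $\widetilde{Y}^\circ$ is smooth; along the way conditions (1) and (2) of Theorem~\ref{th: 2} are maintained.

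For the second phase I still need $\widetilde Y^\circ$ to have simple normal crossings with the current exceptional divisor $E$. I apply canonical principalization (Theorem~\ref{th: 1}) to an ideal that detects this failure, for instance to the ideal $\cI_{\widetilde{Y}^\circ} + \cI_{\reg\text{-locus of } E \cup \widetilde{Y}^\circ}$, or equivalently, locally near a point of $\widetilde{Y}^\circ \cap E$, to a suitable marked ideal encoding non-transversality. The resulting centres are supported on $\widetilde{Y}^\circ \cap E$, and since throughout the entire construction the composed morphism $\widetilde{X} \to X$ is an isomorphism over $\Reg(Y)$ (which carries no exceptional divisor), these centres project into $\Sing(Y)$ and again miss $\Reg(Y)$. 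Functoriality with respect to smooth morphisms, closed embeddings, and group actions (condition (4)) is inherited from the corresponding canonical properties of the marked-ideal resolution recalled in Section~2, because every choice made above (the multiplicity $\mu_i$, the successive top loci, the detection ideal for SNC) is defined intrinsically from $(X, Y)$.

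The main obstacle is the final SNC step: one must argue that the additional blow-ups required to put $\widetilde{Y}$ in normal crossing with $E$ can be chosen canonically \emph{and} stay away from $\Reg(Y)$. The second condition reduces, as indicated, to the observation that the problematic locus $\widetilde{Y}^\circ \cap E$ projects into $\Sing(Y)$; the first condition requires a uniform canonical recipe for the detection ideal, which is where the homogenisation/coefficient-ideal formalism of \cite{Wlod} (reviewed in Section~2) does the real work, guaranteeing that any two local choices of maximal contact agree up to automorphism and hence that the resulting principalization is intrinsic.
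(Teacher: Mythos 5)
The central gap is the termination criterion of your first phase. From ``the maximal multiplicity drops to $1$'' you conclude that the current strict transform is smooth, but this implication holds only for hypersurfaces. In codimension $\geq 2$ the order of $\cI_Y$ does not detect smoothness: for $Y=V(x,\,y^{2}-z^{3})\subset \CC^{3}$ one has $\ord_0(\cI_Y)=1$ (the generator $x$ has order one) while $Y$ is singular at the origin. So driving the order of the (controlled or weak) transforms of $\cI_Y$ down to $1$ by iterated resolution of marked ideals does not produce a smooth $\widetilde{Y}^{\circ}$, and your Phase 1 does not deliver condition (3). The paper obtains (3) by a different mechanism, which is exactly the idea missing from your argument: it runs the canonical principalization of $(X,\cI_Y,\emptyset,1)$ (the full algorithm of Section 3, in which the nonmonomial part $\cN(\cI_i)$ is the weak transform of $\cI_Y$) and observes that in the course of this process the strict transform $Y_i$ itself becomes the center of one of the blow-ups; the sequence is then truncated at that stage. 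Smoothness of $\widetilde{Y}=Y_i$ and simple normal crossings with $E_i$ are automatic, because every center of the algorithm is smooth and has SNC with the exceptional divisor by construction, so no separate ``SNC repair'' phase is needed; and disjointness of the earlier centers from $\Reg(Y)$ comes from canonicity (étale-locally over $\Reg(Y)$ the pair is the model of a smooth subvariety, for which the canonical algorithm performs no blow-up before blowing up the subvariety itself).

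Your Phase 2 is also not a proof. The proposed detection ideal $\cI_{\widetilde{Y}^{\circ}}+\cI_{\mathrm{reg\text{-}locus\ of\ }E\cup\widetilde{Y}^{\circ}}$ is not well defined (the regular locus is open and has no ideal), and principalizing any such ideal only makes its total transform a normal crossings divisor; it does not by itself show that the new strict transform of $\widetilde{Y}^{\circ}$ remains smooth, that it acquires SNC with the enlarged exceptional divisor, or that the new centers have SNC with $E$ — these are precisely the statements to be proved, so the phase amounts to a restatement of the goal. (Your Phase 1 bookkeeping that centers stay over $\Sigma_{\mu_0}\subset\Sing(Y)$ can be justified, for $\mu_0\geq 2$, by tracking that the support of each controlled transform maps into the original support; but since the termination criterion fails, the overall structure of the proposal collapses, whereas the paper's single-run truncation argument handles (1)--(4) simultaneously.)
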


\item {\bf Canonical Resolution of Singularities.}
\begin{theorem} \label{th: 3} Let $Y$ be an algebraic variety over a field of characteristic zero.
Then there exists a canonical desingularization of $Y$; that is,
a smooth variety $\widetilde{Y}$ together with a proper birational morphism $\res_Y: \widetilde{Y}\to Y$ such that:
\begin{enumerate}
\item $\res_Y: \widetilde{Y}\to Y$ is  an isomorphism over the nonsingular part of $Y$.
\item The inverse image $\res_Y^{-1}(Y_{\rm sing})$ of the singular locus of  
$Y$ is  a simple normal crossings divisor.
\item The morphism $\res_Y$  is functorial with respect to smooth morphisms.
For any smooth morphism $\phi: Y'\to Y$ there is a natural lifting $\widetilde{\phi}: \widetilde{Y'}\to\widetilde{Y}$ which is a smooth morphism.
\item $\res_Y$ is equivariant with respect to any group action, 
not necessarily preserving the ground field.
\end{enumerate}
\end{theorem}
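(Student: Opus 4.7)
The plan is to derive Theorem \ref{th: 3} from the embedded desingularization of Theorem \ref{th: 2} by a local construction followed by a functorial gluing argument.

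First, cover $Y$ by affine open subsets $\{U_\alpha\}$, each admitting a closed embedding $U_\alpha \hookrightarrow X_\alpha$ into a smooth variety (e.g.\ an affine space). Applying Theorem \ref{th: 2} to the pair $(X_\alpha,U_\alpha)$ produces a canonical blow-up sequence with smooth strict transform $\widetilde{U}_\alpha \subset \widetilde{X}_\alpha$. Define $\res_{U_\alpha} \colon \widetilde{U}_\alpha \to U_\alpha$ as the restriction of the composite $\widetilde{X}_\alpha \to X_\alpha$. Properties (1) and (2) of Theorem \ref{th: 3} then hold on each $U_\alpha$: clause (2) of Theorem \ref{th: 2} forces every centre to avoid $\Reg(U_\alpha)$, so $\res_{U_\alpha}$ is an isomorphism over the smooth locus, and clause (3) of Theorem \ref{th: 2} places $\res_{U_\alpha}^{-1}((U_\alpha)_{\rm sing})$ inside the simple normal crossings exceptional divisor.

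The main obstacle is showing that the local resolutions $\widetilde{U}_\alpha$ patch together into a global smooth variety $\widetilde{Y}$, independently of the chosen ambient embeddings. Given two closed embeddings $i_1 \colon U \hookrightarrow X_1$ and $i_2 \colon U \hookrightarrow X_2$, consider the diagonal embedding $U \hookrightarrow X_1 \times X_2$; the two projections $X_1 \times X_2 \to X_j$ are smooth morphisms of ambient varieties, so the functoriality clause (4) of Theorem \ref{th: 2} furnishes canonical comparison isomorphisms among the three resulting embedded desingularizations. Restricting to $U$ yields a canonical identification of $\widetilde{U}$ independent of the embedding. Applied to double overlaps $U_\alpha \cap U_\beta$, these isomorphisms serve as gluing data for a global $\widetilde{Y}$ with a proper birational morphism $\res_Y \colon \widetilde{Y} \to Y$; the cocycle condition on triple overlaps is verified by a three-factor variant of the same product trick.

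Functoriality under an arbitrary smooth morphism $\phi \colon Y' \to Y$ is then a local assertion: after shrinking, $\phi$ fits into a diagram of embeddings into compatible smooth ambient varieties related by a smooth morphism (achievable, if necessary, by enlarging one ambient space with a relative affine factor), and clause (4) of Theorem \ref{th: 2} delivers the desired smooth lifting $\widetilde{\phi} \colon \widetilde{Y'} \to \widetilde{Y}$. Equivariance under a group action is then immediate, since each group element acts by an automorphism of $Y$ that lifts canonically to $\widetilde{Y}$ via the functoriality just established---this is where the last sentence of Theorem \ref{th: 2}(4), concerning actions that do not preserve the ground field, is used. The hardest part of the argument is the notational bookkeeping for the cocycle condition on triple overlaps, where the strict canonicity of the embedded desingularization produced by the algorithm is essential.
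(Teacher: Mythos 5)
Your overall route coincides with the paper's: Theorem \ref{th: 3} is proved there as the step (3)$\Rightarrow$(4) of the Hironaka resolution principle --- locally embed $Y$ into affine space, apply the canonical embedded desingularization of Theorem \ref{th: 2}, and use its canonicity and functoriality to show independence of the embedding, to glue, and to get functoriality and equivariance --- with the details deferred to \cite{Wlod}. So the decomposition is the right one, but two steps of your write-up do not work as literally stated.

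First, the comparison of two embeddings $i_1\colon U\hookrightarrow X_1$ and $i_2\colon U\hookrightarrow X_2$. Functoriality of Theorem \ref{th: 2}(4) under a \emph{smooth morphism} of ambient varieties $\phi\colon X'\to X$ compares the canonical embedded desingularization of the pair $(X',\phi^{-1}(Y))$ with that of $(X,Y)$. For the projection $\mathrm{pr}_1\colon X_1\times X_2\to X_1$ one has $\mathrm{pr}_1^{-1}(i_1(U))=i_1(U)\times X_2$, which is \emph{not} the diagonally embedded copy $(i_1\times i_2)(U)$; so the smooth projections alone furnish no comparison between the graph embedding and the two given ones. What is needed is the other half of clause (4), compatibility with \emph{embeddings of ambient varieties}: after shrinking, extend $i_2\circ i_1^{-1}$ to a morphism $g\colon X_1\to X_2$ (possible since $X_2$ may be taken to be an affine space), and use the closed embedding of smooth varieties $(\id,g)\colon X_1\hookrightarrow X_1\times X_2$, which carries $i_1(U)$ onto $(i_1\times i_2)(U)$; this identifies the embedded desingularization obtained from $i_1$ with the one obtained from the product embedding, and symmetrically for $i_2$. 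Second, clause (2) of Theorem \ref{th: 3}: from Theorem \ref{th: 2}(3) you only conclude that $\res_Y^{-1}(Y_{\rm sing})$ is \emph{contained} in the simple normal crossings divisor $\widetilde{Y}\cap E_r$, whereas the theorem asserts that it \emph{is} a simple normal crossings divisor. This is not automatic for a proper birational morphism onto a singular variety (the preimage of the singular locus can have codimension $\geq 2$, as for small resolutions), so one must argue, as in \cite{Wlod}, that the preimage of $Y_{\rm sing}$ is a union of components of $\widetilde{Y}\cap E_r$, or continue the blow-up sequence (e.g.\ principalizing the pullback of the ideal of $Y_{\rm sing}$) until it becomes divisorial. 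The remaining points --- gluing over overlaps with the cocycle check by the same product device, the local factorization of a smooth $\phi\colon Y'\to Y$, and equivariance deduced from canonicity --- match the intended argument.
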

\end{enumerate}

\begin{remark} Note that a blow-up of codimension one components  is an isomorphism. 
However it defines a nontrivial transformation of marked ideals. In the 
actual desingularization process, blow-ups of this kind may occur for 
some marked ideals induced on subvarieties of ambient varieties. Though they define isomorphisms of those subvarieties they determine blow-ups of ambient
varieties which are not isomorphisms.
\end{remark}

\begin{remarks} 
(1)\, By the exceptional divisor of a blow-up $\sigma: X'\to X$ with smooth 
center $C$ we mean the inverse image $E:=\sigma^{-1}(C)$ of the center C. By 
the exceptional divisor of a composite of blow-ups $\sigma_i$ with  smooth centers $C_{i-1}$ we mean the union of the strict transforms of the exceptional divisors of $\sigma_i$. This definition coincides with the standard definition of the exceptional set of points of the birational morphism in the case when $\codim(C_i)\geq 2$ (as in Theorem  \ref{th: 2}). If $\codim(C_{i-1})=1$ the blow-up of $C_{i-1}$ is an identical isomorphism and defines a formal operation of converting a subvariety $C_{i-1}\subset X_{i-1}$ into a component of the exceptional divisor $E_i$ on $X_i$. This formalism is convenient  for  the proofs. In particular it indicates that $C_{i-1}$ identified via $\sigma_i$ with  a component of $E_i$ has simple normal crossings with other components of $E_i$.
\smallskip

(2)\, In Theorem \ref{th: 2}, we blow up centers of codimension $\geq 2$ and both definitions coincide.
\end{remarks}

\section{Marked ideals, coefficient ideals and  hypersurfaces of maximal contact}
We shall assume that the ground field is algebraically closed.

\subsection{Resolution of marked ideals}

For any sheaf of ideals ${\cI}$ on  a smooth  variety $X$ and any point $x\in X$ we denote by
$$\ord_x({\cI}):=\max\{i\mid \cI\subset m_x^i\}$$ the {\it order} of ${\cI}$ at $x$.
(Here $m_x$ denotes the maximal ideal of $x$.)

\begin{definition}(Hironaka (see \Hi, \Hii),  Bierstone-Milman (see \BM),Villamayor (see \Vi))
A {\it marked ideal} (originally a {\it basic object} in Villamayor) is a collection $(X,{\cI},E,\mu)$, where $X$ is a smooth variety,
${\cI}$ is a  sheaf of ideals on $X$, $\mu$ is a nonnegative integer and $E$ is a totally ordered collection of  divisors
whose  irreducible components are pairwise disjoint and all have multiplicity one. Moreover the irreducible components of divisors in $E$ have simultaneously
simple normal crossings. \end{definition}
\begin{definition}(Hironaka (\Hi, \Hii), Bierstone-Milman (see \BM),Villamayor (see \Vi))
By the {\it support} (originally {\it singular locus}) of $(X,{\cI},E,\mu)$ we mean
$$\supp(X,{\cI},E,\mu):=\{x\in X\mid \ord_x(\cI)\geq \mu\}.$$
\end{definition}

\begin{remarks}
(1)\, Sometimes for simplicity we will represent marked ideals $(X,{\cI},E,\mu)$ as couples $({\cI},\mu)$ or even ideals ${\cI}$.

(2)\, For any sheaf of ideals ${\cI}$ on $X$, we have
 $\supp({\cI},1)=\supp({\cO_X / \cI})$.
 
(3)\, For any marked ideal $({\cI},\mu)$ on $X$,
$\supp({\cI},\mu)$ is a closed subset of $X$ (Lemma \ref{le: Vi1}).
\end{remarks}

\begin{definition}(Hironaka (see \Hi, \Hii),  Bierstone-Milman (see \BM),Villamayor (see \Vi))
By a {\it resolution} of $(X,{\cI},E,\mu)$ we mean
a sequence of blow-ups $\sigma_i:X_i\to X_{i-1}$ of smooth centers $C_{i-1}\subset
 X_{i-1}$,
$$
X_0=X\buildrel \sigma_1 \over\longleftarrow X_1
\buildrel \sigma_2 \over\longleftarrow X_2 \buildrel
\sigma_3 \over \longleftarrow\ldots
X_i\longleftarrow \ldots \buildrel \sigma_{r}  \over\longleftarrow X_r,$$
which defines a sequence of  marked ideals
$(X_i,{\cI}_i,E_i,\mu)$ where
\begin{enumerate}
\item $C_i
\subset
\supp(X_i,{\cI}_i,E_i,\mu)$.
\item $C_i$ has simple normal crossings with $E_i$.
\item ${\cI}_i=\cI(D_i)^{-\mu}\sigma_i^*({\cI}_{i-1})$, where $\cI(D_i)$ is the ideal of  the  exceptional divisor $D_i$
of $\sigma_i$.
\item $E_i=\sigma_i^{\rm c}(E_{i-1})\cup \{D_i\}$, where  $\sigma_i^{\rm c}(E_{i-1})$ is the set of strict transforms of divisors in $E_{i-1}$.
\item The order  on $\sigma_i^{\rm c}(E_{i-1})$ is defined by the order on 
$E_{i-1}$, while $D_i$ is the maximal element of $E_i$.
\item $\supp(X_r,{\cI}_r,E_r,\mu)=\emptyset$.
\end{enumerate}

\begin{definition}
A sequence of morphisms which are either isomorphisms or  blow-ups satisfying  conditions (1)-(5) is called a {\it multiple  blow-up}. The number of morphisms in a multiple blow-up will be called its {\it length}.
\end{definition}
\begin{definition}
An {\it extension} of a multiple blow-up (or a resolution) $(X_i)_{0\leq i\leq m}$ is a sequence $(X'_j)_{0\leq j\leq m'}$ of blow-ups and isomorphisms
$X'_{0}=X'_{j_0} =\ldots=X'_{j_1-1}\leftarrow X'_{j_1}=\ldots=X'_{j_2-1}\leftarrow \ldots X'_{j_m}=\ldots=X'_{m'}$,
where $X'_{j_i}=X_i$.
\end{definition}

\begin{remarks}
(1)\, The definition of  extension  arises naturally when we pass to open 
subsets of the ambient variety $X$.

(2)\, The notion of a {\it multiple  blow-up} is analogous to the notion of a 
sequence of {\it admissible} blow-ups considered by Hironaka, Bierstone-Milman and Villamayor.
\end{remarks}

\subsection{Transforms of marked ideal and controlled transforms of functions}
In the setting of the above definition we will call $$({\cI}_i,\mu):=\sigma_i^{{\rm c}}({\cI}_{i-1},\mu)$$
\end{definition}
\noindent the {\it transform of the marked ideal} or  {\it controlled transform} of $(\cI,\mu)$. It makes sense for a single blow-up in a multiple  blow-up as well as for a multiple  blow-up. Let  $\sigma^i:=\sigma_1\circ\ldots\circ\sigma_i: X_i\to X$ be a composition of consecutive morphisms of a multiple  blow-up. Then in the above setting
$$({\cI}_i,\mu)=\sigma^{i{\rm c}}({\cI},\mu).$$
We will also denote the controlled transform $\sigma^{i{\rm c}}({\cI},\mu)$ by
$(\cI,\mu)_i$ or $[\cI,\mu]_i.$

The controlled transform can also be defined for local sections $f\in\cI(U)$. 
Let $\sigma: X\leftarrow X'$ be a blow-up of a smooth center 
$C\subset \supp(\cI,\mu)$ defining transformation of marked ideals 
$\sigma^{\rm c}(\cI,\mu)=(\cI',\mu)$. Let $f\in \cI(U)$ be a section of $\cI$.
Let $U'\subseteq\sigma^{-1}(U)$ be an open subset for which the sheaf of ideals of the exceptional divisor is generated by a function $y$. The function $$g=y^{-\mu}(f\circ\sigma) \in  \cI(U')$$ \noindent
is the {\it controlled transform} of $f$ on $U'$ (defined up to an invertible function). As before we extend it  to any multiple  blow-up.

The following lemma shows that the notion of controlled transform is  well defined.
\begin{lemma} \label{bb} Let $C\subset \supp({\cI},\mu)$ be a smooth center of 
a blow-up $\sigma: X\leftarrow X'$ and let $D$ denote the exceptional divisor.  Let ${\cI}_C$ denote the sheaf of ideals defined by $C$. Then
\begin{enumerate}
\item $\cI \subset {\cI}_C^\mu$.
\item  $\sigma^*({\cI})\subset ({\cI}_D)^\mu$.
\end{enumerate}
\end{lemma}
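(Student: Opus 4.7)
\noindent\textbf{Proof plan for Lemma \ref{bb}.}

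The plan is to prove (1) using the local support condition along the entirety of the center $C$, and then obtain (2) by pulling back and invoking the defining property of the blow-up.

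For (1), I would argue locally at an arbitrary point $y\in C$. Since $C$ is smooth, I would pick a local coordinate system $u_1,\dots,u_n$ on $X$ at $y$ such that $\cI_C=(u_1,\dots,u_c)$ in a neighbourhood $U$ of $y$, where $c=\codim_y C$, and such that $u_{c+1},\dots,u_n$ restrict to a local coordinate system on $C$. Given any local section $f\in\cI(U)$, I would write its Taylor expansion (shrinking $U$ if necessary) as
\[
f \;=\; \sum_{|\alpha|<\mu} A_\alpha(u_{c+1},\dots,u_n)\, u_1^{\alpha_1}\cdots u_c^{\alpha_c} \;+\; g,
\]
with $g\in \cI_C^{\,\mu}(U)$ and $A_\alpha\in\cO_C(U\cap C)$. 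Showing $f\in\cI_C^{\,\mu}$ locally amounts to proving $A_\alpha\equiv 0$ for every $\alpha$ with $|\alpha|<\mu$.

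To force this vanishing I would use the hypothesis $C\subset\supp(\cI,\mu)$, applied not only at $y$ but at every neighbouring point $y_0\in C\cap U$ with coordinates $(0,\dots,0,a_{c+1},\dots,a_n)$. Re-centering the expansion at $y_0$ by replacing $u_i$ with $u_i-a_i$ for $i>c$, the lowest-degree part of $f$ at $y_0$ is precisely $\sum_{|\alpha|<\mu} A_\alpha(a)\, u_1^{\alpha_1}\cdots u_c^{\alpha_c}$ (plus contributions of total order $\geq\mu$ coming from $g$ and from higher-order terms of each $A_\alpha$). The order condition $\ord_{y_0}(\cI)\geq\mu$ forces this leading part to vanish, so $A_\alpha(a)=0$ for every $\alpha$ with $|\alpha|<\mu$. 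Since $a$ ranges over a Zariski-open neighbourhood of $y$ in $C$, I conclude $A_\alpha\equiv 0$, hence $f\in\cI_C^{\,\mu}$ on $U$. This step — reading off local vanishing of the coefficients from the pointwise order bound along the whole of $C$ — is the main technical content, though it is routine once the coordinates are set up.

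For (2), I would appeal to the defining property of the blow-up $\sigma\colon X'\to X$ along $C$: the inverse-image ideal sheaf $\sigma^{-1}(\cI_C)\cdot\cO_{X'}$ is precisely the ideal sheaf $\cI_D$ of the exceptional divisor. Applying $\sigma^{*}$ (equivalently, $\sigma^{-1}(\,\cdot\,)\cdot\cO_{X'}$) to the inclusion from (1) gives
\[
\sigma^{*}(\cI) \;\subset\; \sigma^{*}\!\bigl(\cI_C^{\,\mu}\bigr) \;=\; \bigl(\sigma^{-1}(\cI_C)\cdot\cO_{X'}\bigr)^{\mu} \;=\; \cI_D^{\,\mu},
\]
which is (2). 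This shows at once that the controlled transform $g=y^{-\mu}(f\circ\sigma)$ is a regular section on any chart where $\cI_D=(y)$, so the notion of controlled transform of a section is well defined up to a unit, completing the lemma.
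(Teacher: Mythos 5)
Your proof is correct and follows essentially the same route as the paper: for (1) you expand $f$ in monomials in generators of $\cI_C$ and use the condition $\ord_{y_0}(\cI)\geq\mu$ at points $y_0$ of the center to force the coefficients of the low-degree monomials to vanish (the paper phrases this as a contradiction at a single point where a minimal-degree coefficient is nonzero, but the mechanism is identical), and for (2) you pull back and use $\sigma^*(\cI_C)=\cI_D$ exactly as the paper does. No gaps beyond the level of informality already present in the paper's own argument.
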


\begin{proof} (1)\, We can assume that the ambient variety $X$ is affine. Let $u_1,\ldots,u_k$ be parameters generating ${\cI}_C$
Suppose $f \in \cI \setminus {\cI}_C^\mu$. Then we can write $f=\sum_{\alpha}c_\alpha u^\alpha$, where either $|\alpha|\geq \mu$ or  $|\alpha|< \mu$ and $c_\alpha\not\in {\cI}_C$.
By the assumption there is $\alpha$ with $|\alpha|< \mu$ such that $c_\alpha\not \in {\cI}_C$. Take  $\alpha$  with  the smallest $|\alpha|$. There is a point $x\in C$ for which $c_\alpha(x)\neq 0$ and in the Taylor expansion of $f$ at $x$ there is a term $c_\alpha(x) u^\alpha$. Thus $\ord_x(\cI)< \mu$. This contradicts to the assumption $C\subset \supp({\cI},\mu)$.

(2)\, $\sigma^*({\cI})\subset \sigma^*({\cI}_C)^\mu=({\cI}_D)^\mu$.
\end{proof}

\subsection{Hironaka resolution principle}

Our proof is based on the following principle which can be traced back to Hironaka and was used by Villamayor in his simplification of Hironaka's algorithm:
\begin{eqnarray}  &\textbf{  (Canonical) Resolution of marked ideals $(X,\cI,E,\mu)$}\\ &\textbf{$\Downarrow$}\nonumber\\
& \textbf{ (Canonical) Principalization of the sheaves $\cI$ on $X$}\\
&\textbf{$\Downarrow$}\nonumber\\
&\textbf{(Canonical) Weak Embedded
Desingularization of subvarieties $Y\subset X$}\\
&\textbf{$\Downarrow$}\nonumber\\
&\textbf{(Canonical) Desingularization} \end{eqnarray}

\noindent
(1)$\Rightarrow$(2).  It follows immediately from the definition that a resolution of $(X,{\cI},\emptyset,1)$
determines a principalization of ${\cI}$. Denote by $\sigma: X\leftarrow \widetilde{X}$ the morphism defined by a resolution of $(X,\cI,\emptyset,1)$. The controlled transform $(\widetilde{\cI},1):=\sigma^{\rm c}(\cI,1)$ has the empty support. Consequently, $V(\widetilde{\cI})=\emptyset$,  and thus $\widetilde{\cI}$ is equal to the structural sheaf $\cO_{\widetilde{X}}$. This implies that the full transform $\sigma^*(\cI)$ is principal and generated by the sheaf of ideal of a divisor whose components are the exceptional divisors.
The actual process of desingularization is often achieved before $(X,{\cI},E,1)$ has been resolved (see \cite{Wlod}).

\noindent
(2)$\Rightarrow$(3).
Let $Y\subset X$ be an
irreducible  subvariety. Assume there is a principalization of sheaves of ideals ${\cI}_Y$ subject to conditions (a) and (b) of Theorem \ref{th: 1}.
Then, in the course of the
principalization of ${\cI}_Y$, the strict transform $Y_i$
of $Y$ in some $X_i$ is the center of a blow-up. At this stage $Y_i$ is nonsingular and has simple normal crossings with the exceptional divisors.

\noindent
(3)$\Rightarrow$(4). Every algebraic variety locally admits an embedding into an affine space.  Then we can show that the existence of canonical embedded desingularization independent of the embedding defines a canonical desingularization.

For more details, see \cite{Wlod}.

\begin{remark} {\bf Resolution scheme and marked ideals}.
Marked ideals will be understood as objects which carry vital information in the resolution scheme. There are four different types of information that can be associated with marked ideals:
\begin{enumerate}
\item The support $\supp(\cI,\mu)$ is the ``bad locus" which shall be eliminated. The blow-ups performed should have centers inside of $\supp(\cI,\mu)$.
\item The controlled transform $\sigma^c(\cI,\mu)=\cI_D^{-\mu}\sigma^{{*}}({\cI},\mu)
$ is the transform of the marked ideal associated with blow-ups with centers inside 
$\supp(\cI,\mu)$.
\item The resolution of $\supp(\cI,\mu)$ is the sequence of blow-ups and the induced transformations of marked ideals eliminating the 
support of  the resulting marked ideal $(\cI,\mu)$.
\item Canonical resolution is a unique resolution which will be assigned to a marked ideal.
Once we assign to a certain class of marked ideals their canonical resolutions they become useful operations  to resolve some larger class of marked ideals. In other words, the resolution of a certain marked ideal is always reduced to resolution of some ``simpler" marked ideals.
The notion of simplicity refers essentially to two very rough invariants : the dimension of the ambient variety, and the order of nonmonomial part.
\end{enumerate}
The algorithm builds upon two different canonical reductions:
\begin{itemize}
\item reduction of order by resolving a so called ``companion ideal" (see Step 2 in Section \ref{Resolution algorithm}).
\item reduction of dimension of the ambient variety which relies on the two fundamental concepts
of hypersurface of  {\it maximal contact}, and {\it coefficient ideal} (see Sections \ref{Maximal}, \ref{Coefficient}).
\end{itemize}
\end{remark}

\subsection{Equivalence relation for marked ideals}

Let us introduce the following equivalence relation for marked ideals:
\begin{definition} Let  $(X,{\cI},E_{\cI},\mu_{\cI})$ and $(X,{\cJ},E_{\cJ},\mu_{\cJ})$ be two marked ideals
on a smooth variety $X$. Then
{$(X,{\cI},E_{\cI},\mu_{\cI})\simeq (X,{\cJ},E_{\cJ},\mu_{\cJ})$}
\noindent if:
\begin{enumerate}
\item $E_{\cI}=E_{\cJ}$ and the orders on $E_{\cI}$ and on $E_{\cJ}$ coincide.
\item $\supp({\cI},\mu_{\cI})=\supp({\cJ},\mu_{\cJ}).$
\item The multiple  blow-ups $(X_i)_{i=0,\ldots,k}$ are the same for both marked ideals, and
$\supp({\cI}_i,\mu_{\cI})=\supp({\cJ}_i,\mu_{\cJ}).$
\end{enumerate}
\end{definition}

\begin{example}
For any $k\in {\bf N}$, $({\cI},\mu)\simeq ({\cI}^k,k\mu)$.
\end{example}

\begin{remark} The marked ideals considered in this paper satisfy a stronger equivalence condition:
For any smooth morphism $\phi: X' \to X$, $\phi^*(\cI,\mu)\simeq\phi^*(\cJ,\mu)$.
This condition will follow and is not added in the definition.
 \end{remark}


\subsection{Ideals of derivatives \label{der}}
Ideals of derivatives were first introduced and studied in the resolution context by Giraud.
Villamayor developed and applied this language to his {\it basic objects}.

\begin{definition}(Giraud, Villamayor) Let ${\cI}$ be a coherent sheaf of ideals on a smooth variety $X$. By the {\it  first derivative} (originally {\it extension}) ${\cD}({\cI})$ of $\cI$ we mean the coherent sheaf of ideals generated by all functions
$f\in {\cI}$ together with their first derivatives. Then the {\it i-th derivative} ${\cD}^i({\cI})$ is defined to be ${\cD}({\cD}^{i-1}({\cI}))$. If $({\cI},\mu)$ is a marked ideal and $i\leq \mu$ then we define
$${\cD}^i({\cI},\mu):=({\cD}^i({\cI}),\mu-i).$$
\end{definition}

Recall that on a smooth variety $X$ there is a locally free sheaf of differentials $\Omega_{X/K}$ over $K$ generated locally by $du_1,\ldots, du_n$ for a set of local parameters $u_1,\ldots, u_n$. The dual sheaf of derivations $\Der_K(\cO_X)$ is locally generated by the derivations $\frac{\partial }{\partial u_i}$.
Immediately from the definition we observe that ${\cD}({\cI})$ is a coherent sheaf
defined locally by  generators $f_j$ of ${\cI}$ and all their partial derivatives $\frac{\partial f_j}{\partial u_i}$. We see by induction that ${\cD}^i({\cI})$ is a coherent sheaf
defined locally by the generators $f_j$ of ${\cI}$ and their derivatives $\frac{\partial^{|\alpha|} f_j}{\partial u^\alpha}$ for all multiindices $\alpha=(\alpha_1,\ldots,\alpha_n)$, where $|\alpha|:=\alpha_1+\ldots+\alpha_n\leq i$.

\begin{lemma}(Giraud, Villamayor) \label{le: Vi1}
For any  $i\leq\mu-1$,  $$\supp({\cI},\mu)=\supp({\cD}^i({\cI}),\mu-i).$$
In particular,  $\supp({\cI},\mu)=\supp({\cD}^{\mu-1}({\cI}),1)=V({\cD}^{\mu-1}({\cI}))$ is a closed set.
\end{lemma}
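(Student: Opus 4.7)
My plan is to prove both inclusions of the equality $\supp(\cI,\mu) = \supp(\cD^i(\cI),\mu-i)$ by working locally in a system of parameters $u_1,\ldots,u_n$ at a point $x$ and comparing the Taylor expansion of a generator $f$ of $\cI$ at $x$ with the Taylor expansions of its iterated partial derivatives $\partial^{|\alpha|}f/\partial u^\alpha$ for $|\alpha|\leq i$. The local description of $\cD^i(\cI)$ recorded in the text gives that these partials, together with $f$ itself, generate $\cD^i(\cI)$, so $\ord_x(\cD^i(\cI))$ is the minimum of the orders $\ord_x(\partial^{|\alpha|}f/\partial u^\alpha)$ as $f$ ranges over generators of $\cI$ and $|\alpha|\leq i$.

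For the forward inclusion ($\subseteq$), I note that if $f\in m_x^{\mu}$ then each partial derivative $\partial f/\partial u_j$ lies in $m_x^{\mu-1}$, since differentiation drops the order of vanishing by at most one. Iterating this $i$ times shows $\partial^{|\alpha|}f/\partial u^\alpha \in m_x^{\mu-i}$ for every $|\alpha|\leq i$, hence $\cD^i(\cI)\subseteq m_x^{\mu-i}$. This part is purely formal and holds in any characteristic.

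For the reverse inclusion ($\supseteq$) I use characteristic zero. Write $f=\sum_{\delta} c_\delta(u-u(x))^\delta$ in Taylor expansion at $x$; then $\partial^{|\alpha|}f/\partial u^\alpha$ has Taylor coefficients $(\delta!/(\delta-\alpha)!)\,c_\delta$ at the monomial $(u-u(x))^{\delta-\alpha}$, with $\delta\geq\alpha$. Assuming $\cD^i(\cI)\subseteq m_x^{\mu-i}$, the hypothesis $\ord_x(\partial^{|\alpha|}f/\partial u^\alpha)\geq \mu-i$ forces $c_\delta=0$ whenever $\delta\geq\alpha$, $|\alpha|\leq i$, and $|\delta|<\mu-i+|\alpha|$ (using that the factorial coefficient is invertible in characteristic zero). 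To conclude $\ord_x(f)\geq\mu$, I show that every $\delta$ with $|\delta|<\mu$ admits such an $\alpha$: if $|\delta|\leq i$ take $\alpha=\delta$ (then the inequality $|\delta|<\mu-i+|\alpha|$ reduces to $i\leq\mu-1$, which is the hypothesis); if $|\delta|>i$ pick any $\alpha\leq\delta$ with $|\alpha|=i$, and the inequality reduces to $|\delta|<\mu$. Thus all $c_\delta$ with $|\delta|<\mu$ vanish, so $f\in m_x^\mu$, and since this holds for all generators of $\cI$ we get $\cI\subseteq m_x^{\mu}$, i.e.\ $\ord_x(\cI)\geq \mu$.

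The particular case $i=\mu-1$ gives $\supp(\cI,\mu) = \supp(\cD^{\mu-1}(\cI),1)$, and by definition $\supp(\cJ,1)=V(\cJ)$ is the vanishing set of a coherent sheaf of ideals, hence closed in $X$. The only place where care is required is the reverse direction, where the combinatorial selection of $\alpha$ depending on the size of $\delta$ is exactly where the bound $i\leq\mu-1$ enters; I expect this to be the main (though mild) obstacle, together with noting that the factorial coefficients produced by differentiation are nonzero in characteristic zero, which is precisely the standing assumption of the paper.
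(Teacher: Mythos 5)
Your proof is correct. The paper states this lemma without proof (the properties in Section 2 are deferred to \cite{Wlod}), and the standard argument given there is essentially yours: both inclusions via the Taylor expansion at $x$, comparing the coefficients of $f$ with those of $\partial^{|\alpha|}f/\partial u^{\alpha}$, with characteristic zero entering only through the invertibility of the factorial factors $\delta!/(\delta-\alpha)!$ --- which is also precisely why the appendix can reuse Lemma \ref{le: Vi1} in characteristic $p$ under the hypothesis that the relevant multiplicities are less than $p$.
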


We write $({\cI},\mu)\subset ({\cJ},\mu)$ if $\cI\subset {\cJ}$.

\begin{lemma}(Giraud,Villamayor) \label{le: inclusions} Let $({\cI},\mu)$ be a marked ideal,
$C\subset\supp ({\cI},\mu)$ a smooth center, and $r\leq \mu$.
Let $\sigma: X\leftarrow
X'$ be a blow-up at $C$. Then $$\sigma^{\rm c}({\cD}^r({\cI},\mu)) \subseteq {\cD}^r(\sigma^{\rm c}({\cI},\mu)).$$

\end{lemma}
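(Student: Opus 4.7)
The plan is to recast the inclusion of marked ideals as the plain inclusion of sheaves
$$\cI_D^{-(\mu-r)}\sigma^*(\cD^r(\cI))\subseteq\cD^r\bigl(\cI_D^{-\mu}\sigma^*(\cI)\bigr),$$
both marked at level $\mu-r$, and to prove it by induction on $r$. The case $r=0$ is tautological. For the inductive step I would write $\cD^r=\cD\circ\cD^{r-1}$: by Lemma~\ref{le: Vi1}, $C\subset\supp(\cI,\mu)=\supp(\cD(\cI),\mu-1)$, so the induction hypothesis applies to the marked ideal $\cD(\cI,\mu)=(\cD(\cI),\mu-1)$ at exponent $r-1\leq\mu-1$; combining this with the obvious monotonicity $\cJ_1\subseteq\cJ_2\Rightarrow\cD^{r-1}(\cJ_1)\subseteq\cD^{r-1}(\cJ_2)$ and the case $r=1$ applied to $(\cI,\mu)$ yields the inclusion for $r$.

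All the content therefore lies in the case $r=1$, which I would settle by an explicit local computation. Choose regular parameters $u_1,\ldots,u_n$ on $X$ at a point of $C$ such that $C=\{u_1=\cdots=u_k=0\}$, and work in the standard affine chart of $X'$ with coordinates $u_1',\ldots,u_n'$ satisfying $\sigma^*(u_1)=y:=u_1'$, $\sigma^*(u_j)=u_1'u_j'$ for $2\leq j\leq k$, and $\sigma^*(u_j)=u_j'$ for $j>k$; in this chart $\cI_D=(y)$. For a local section $f\in\cI$, Lemma~\ref{bb} gives $\sigma^*(f)=y^\mu g$ with $g\in\cI':=\cI_D^{-\mu}\sigma^*(\cI)$. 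Applying the chain rule to $\sigma^*(f)$ in two ways --- through $f\circ\sigma$ and through the factorization $y^\mu g$ --- and solving the resulting triangular system for the pullbacks of the $\partial_{u_j}f$, one obtains $\sigma^*(\partial_{u_j}f)/y^{\mu-1}=\partial_{u_j'}g$ for $2\leq j\leq k$, $\sigma^*(\partial_{u_j}f)/y^{\mu-1}=y\,\partial_{u_j'}g$ for $j>k$, and, for the transverse direction,
$$\sigma^*(\partial_{u_1}f)/y^{\mu-1}=\mu\,g+y\,\partial_{u_1'}g-\sum_{j=2}^{k}u_j'\,\partial_{u_j'}g.$$
Together with the immediate relation $\sigma^*(f)/y^{\mu-1}=y\,g$, each of these expressions visibly belongs to $\cD(\cI')$, using that $\cI'\subseteq\cD(\cI')$ and that $\cD(\cI')$ is a sheaf of ideals; this gives the case $r=1$.

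The step I expect to be the main obstacle is the transverse identity above: isolating $\sigma^*(\partial_{u_1}f)$ requires inverting a triangular chain-rule system that mixes in the contributions from $u_2',\ldots,u_k'$, and differentiating the prefactor $y^\mu$ produces a term $\mu y^{\mu-1}g$ that must be recognized as a zeroth-order element of $\cD(\cI')$ rather than as a genuine obstruction. Everything else is routine bookkeeping with the chain rule, and symmetry of the remaining standard charts of the blow-up transports the local inclusion to the global one.
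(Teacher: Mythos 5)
Your proposal is correct, and it is essentially the argument the paper has in mind: the text itself gives no proof beyond ``see the simple computations in \cite{Vi2}, \cite{Wlod}'', and those computations are exactly your chart-by-chart chain-rule calculation for first derivatives (including the term $\mu\,g$ from differentiating $y^{\mu}$ and the relation $\sigma^*(f)/y^{\mu-1}=y\,g$), iterated to general $r$ via Lemma~\ref{le: Vi1} just as you do.
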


\begin{proof}  See the simple computations in \cite{Vi2}, \cite{Wlod}.
\end{proof}

\subsection{Hypersurfaces of maximal contact}\label{Maximal}

The concept of the {\it hypersurfaces of maximal contact} is one of the key points of this proof. It was
originated by Hironaka, Abhyankhar and Giraud and developed in the papers of Bierstone-Milman and Villamayor.
In our terminology, we are looking for a smooth hypersurface containing the support of a marked ideal and whose strict transforms under multiple  blow-ups contain the supports of the induced marked ideals. Existence of such hypersurfaces allows a reduction of the resolution problem to  codimension 1.

First we introduce marked ideals which locally admit  hypersurfaces of maximal contact.

\begin{definition}(Villamayor (see \cite{Vi}) \label{Vi1})
We say that a marked ideal $({\cI},\mu)$  is of {\it maximal order} (originally {\it simple basic object}) if $\max\{\ord_x({\cI})\mid x\in X\}\leq \mu$ or equivalently ${\cD}^\mu({\cI})=\cO_X$.
\end{definition}

\begin{lemma}(Villamayor (see \cite{Vi}) \label{Vi2}) Let $({\cI},\mu)$ be a marked ideal of maximal order
and let $C\subset\supp ({\cI},\mu)$ be a smooth center. Let $\sigma: X\leftarrow
X'$ be a blow-up at $C$. Then $\sigma^{\rm c}({\cI},\mu)$ is of maximal order.
\end{lemma}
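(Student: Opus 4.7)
The plan is to deduce this immediately from Lemma \ref{le: inclusions} applied with $r=\mu$, combined with the characterization of ``maximal order'' given after Definition \ref{Vi1}.

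First, I would unpack the hypothesis. By definition, $(\cI,\mu)$ being of maximal order means $\cD^\mu(\cI)=\cO_X$, so in the language of marked ideals
\[
\cD^\mu(\cI,\mu)=(\cD^\mu(\cI),0)=(\cO_X,0).
\]
I would then apply Lemma \ref{le: inclusions} to the admissible center $C\subset \supp(\cI,\mu)$ with $r=\mu$, yielding the inclusion
\[
\sigma^{\rm c}\bigl(\cD^\mu(\cI,\mu)\bigr)\;\subseteq\;\cD^\mu\bigl(\sigma^{\rm c}(\cI,\mu)\bigr).
\]

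Next I would compute the left-hand side explicitly. Writing $(\cI',\mu):=\sigma^{\rm c}(\cI,\mu)$ and letting $D$ denote the exceptional divisor, the controlled transform in the case of exponent $0$ reads
\[
\sigma^{\rm c}(\cO_X,0)=\bigl(\cI(D)^{0}\cdot\sigma^{*}(\cO_X),\,0\bigr)=(\cO_{X'},0).
\]
Substituting into the previous inclusion gives $\cO_{X'}\subseteq \cD^\mu(\cI')$, hence $\cD^\mu(\cI')=\cO_{X'}$, which by Definition \ref{Vi1} is exactly the statement that $\sigma^{\rm c}(\cI,\mu)$ is of maximal order.

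There is no real obstacle here; the only point to be careful about is that Lemma \ref{le: inclusions} is stated with the constraint $r\le\mu$, so the borderline case $r=\mu$ must be allowed — which it is — and that the derivative operation on a marked ideal with exponent $0$ and the controlled transform of such a marked ideal interact in the expected trivial way. Once these bookkeeping points are checked, the proof is just the two-line chain of inclusions above.
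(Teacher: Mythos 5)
Your proof is correct and is essentially the same as the paper's: both apply Lemma \ref{le: inclusions} with $r=\mu$ to the identity $\cD^\mu(\cI)=\cO_X$ and observe that the controlled transform of $(\cO_X,0)$ is $(\cO_{X'},0)$, forcing $\cD^\mu(\sigma^{\rm c}(\cI,\mu))=\cO_{X'}$. Your version merely spells out the bookkeeping (the case $r=\mu$ and the exponent-$0$ controlled transform) that the paper leaves implicit.
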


\begin{proof} If $({\cI},\mu)$ is a marked ideal of maximal order then ${\cD}^\mu({\cI})=\cO_X$.
Then, by  Lemma \ref{le: inclusions},  ${\cD}^\mu(\sigma^{\rm c}({\cI},\mu))\supset \sigma^{\rm c}({\cD}^\mu({\cI}),0)=\cO_X$. 
\end{proof}

\begin{lemma}(Villamayor (see \cite{Vi}),  \label{Vi3}) If $({\cI},\mu)$ is a marked ideal of maximal order and $0\leq i \leq \mu$, then ${\cD}^{i}({\cI},\mu)$
is of maximal order.
\end{lemma}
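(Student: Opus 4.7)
The plan is to unwind the definitions and use the fact that iterated derivatives of ideals compose in the obvious way. By Definition \ref{Vi1}, the marked ideal $\cD^i(\cI,\mu) = (\cD^i(\cI),\mu-i)$ is of maximal order precisely when $\cD^{\mu-i}(\cD^i(\cI)) = \cO_X$. So the whole task reduces to verifying this single equality.

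The key observation is that the derivative operation on coherent sheaves of ideals, as defined in Section \ref{der}, satisfies $\cD^{a}(\cD^{b}(\cI)) = \cD^{a+b}(\cI)$. This is immediate from the recursive definition $\cD^{k}(\cJ) := \cD(\cD^{k-1}(\cJ))$: starting from $\cD^i(\cI)$ and applying $\cD$ a further $\mu-i$ times yields $\cD^{\mu}(\cI)$. (Alternatively, one sees this from the local description of $\cD^k(\cI)$ as the ideal generated by all partial derivatives of order $\leq k$ of local generators of $\cI$, together with the Leibniz rule.) Setting $a = \mu - i$ and $b = i$ gives
\[
\cD^{\mu-i}(\cD^i(\cI)) = \cD^{\mu}(\cI).
\]

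Now the hypothesis that $(\cI,\mu)$ is of maximal order gives, again by Definition \ref{Vi1}, that $\cD^{\mu}(\cI) = \cO_X$. Combining, $\cD^{\mu-i}(\cD^i(\cI)) = \cO_X$, which is exactly the statement that $(\cD^i(\cI),\mu-i)$ is of maximal order, completing the proof.

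There is essentially no obstacle: the only point meriting care is the semigroup identity $\cD^a \cD^b = \cD^{a+b}$, and this is formal once one notes that $\cD(\cJ)$ is the ideal generated by $\cJ$ together with first derivatives of its sections, so generators of $\cD^i(\cI)$ together with all their derivatives up to order $\mu - i$ produce exactly generators of $\cI$ together with all their derivatives up to order $\mu$. The constraint $0 \leq i \leq \mu$ is used only to ensure that the marked ideal $\cD^i(\cI,\mu)$ is defined in the sense of the convention preceding Lemma \ref{le: Vi1}.
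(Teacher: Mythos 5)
Your proof is correct and is essentially the paper's own argument, which consists precisely of the one-line identity ${\cD}^{\mu-i}({\cD}^{i}({\cI},\mu))={\cD}^{\mu}({\cI},\mu)=\cO_X$; you have simply spelled out the semigroup property ${\cD}^a{\cD}^b={\cD}^{a+b}$ and the definition of maximal order that the paper leaves implicit.
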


\begin{proof} ${\cD}^{\mu-i}({\cD}^{i}({\cI},\mu))={\cD}^{\mu}({\cI},\mu)=\cO_X$.
\end{proof}

\begin{lemma}(Giraud (see \cite{G}))\label{le: Gi}  Let $({\cI},\mu)$ be a marked ideal of maximal order.  Let $\sigma: X\leftarrow
X'$ be a blow-up at a smooth center $C\subset\supp({\cI},\mu)$. Let $u\in {\cD}^{\mu-1}({\cI},\mu)(U)$ be a function such that, for any $x\in V(u)$, $\ord_x(u)=1$.
Then \begin{enumerate}
\item $V(u)$ is smooth;
\item $\supp({\cI},\mu)\cap U \subset V( u).$
\end{enumerate}
Let $U'\subset \sigma^{-1}(U)\subset X'$ be an open
set where the exceptional divisor is described by $y$.  Let $u':=\sigma^{\rm c}(u)=y^{-1}\sigma^*(u)$ be the controlled transform of $u$.
Then
\begin{enumerate}
\item $u'\in {\cD}^{\mu-1}(\sigma^{\rm c}({\cI}_{|U'},\mu));$
\item  $V(u')$ is smooth;
\item $\supp({\cI'},\mu)\cap U' \subset V( u')$
\item $V(u')$ is the restriction of the strict transform of $V(u)$ to $U'$.
\end{enumerate}
\end{lemma}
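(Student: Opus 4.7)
The plan is to verify the ``before blow-up'' assertions directly from the hypothesis on $u$ together with Lemma~\ref{le: Vi1}, and then to treat the ``after blow-up'' assertions using Lemmas~\ref{bb} and~\ref{le: inclusions} plus a short local coordinate computation. Smoothness of $V(u)$ is immediate from the Jacobian criterion, since $\ord_x(u)=1$ at every $x\in V(u)$ means $du(x)\neq 0$. For $\supp(\cI,\mu)\cap U\subset V(u)$, Lemma~\ref{le: Vi1} gives $\supp(\cI,\mu)=V(\cD^{\mu-1}(\cI))$, and since $u\in\cD^{\mu-1}(\cI)(U)$, every point of $\supp(\cI,\mu)\cap U$ is a zero of $u$.

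For the post-blow-up statements, first note that $C\subset\supp(\cI,\mu)=\supp(\cD^{\mu-1}(\cI),1)$. Lemma~\ref{bb}(2) applied to the marked ideal $(\cD^{\mu-1}(\cI),1)$ then gives $\sigma^{*}(\cD^{\mu-1}(\cI))\subset\cI_D$, so $\sigma^{*}(u)$ is divisible by $y$ on $U'$ and $u'=y^{-1}\sigma^{*}(u)\in\cO_{U'}$ is the controlled transform of $u$ regarded as a section of $(\cD^{\mu-1}(\cI),1)=\cD^{\mu-1}(\cI,\mu)$. Hence $u'\in\sigma^{\rm c}(\cD^{\mu-1}(\cI,\mu))$, and Lemma~\ref{le: inclusions} gives $u'\in\cD^{\mu-1}(\sigma^{\rm c}(\cI,\mu))$, proving (1).

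For (2) and (4), at any point of $C$ complete $u$ to a system of local parameters $u=u_1,u_2,\ldots,u_n$ with $C$ cut out by $u_1,\ldots,u_k$; this is possible because $V(u)$ is smooth and contains $C$. On a chart of the blow-up where the exceptional divisor is $y=u_j$, the functions $y$, the $u_i':=u_i/y$ for $i\in\{1,\ldots,k\}\setminus\{j\}$, and the $u_i$ for $i>k$ are local parameters. If $j=1$ then $\sigma^{*}(u)=y$, so $u'=1$ and $V(u')=\emptyset$; otherwise $u'=u_1'$ is itself one of the new parameters, so $V(u')$ is smooth and has pointwise order $1$ on its vanishing locus, and the identity $V(\sigma^{*}(u))=V(yu_1')=V(y)\cup V(u')$ identifies $V(u')$ with the strict transform of $V(u)$ in this chart, giving (2) and (4). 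Finally (3) follows by applying the pre-blow-up inclusion to the maximal order marked ideal $\sigma^{\rm c}(\cI,\mu)$ (Lemma~\ref{Vi2}) using the section $u'$ of $\cD^{\mu-1}(\sigma^{\rm c}(\cI,\mu))$ provided by (1). The main obstacle is the local coordinate analysis: once one trusts that $u'=y^{-1}\sigma^{*}(u)$ cuts out the strict transform and retains order $1$ on its vanishing locus, everything else is a clean application of the cited lemmas.
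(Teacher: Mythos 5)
Your proposal is correct and follows essentially the same route as the paper's own (much terser) proof: item (1) via $u'\in\sigma^{\rm c}({\cD}^{\mu-1}({\cI}))\subset{\cD}^{\mu-1}(\sigma^{\rm c}({\cI}))$ using Lemma \ref{le: inclusions}, items (2) and (4) by observing that $u$ is one of the local parameters cutting out the center so that $u'=u/y$ is again a parameter, and item (3) from (1) together with Lemma \ref{le: Vi1}. You merely supply details the paper leaves implicit (divisibility of $\sigma^{*}(u)$ by $y$ via Lemma \ref{bb}, the explicit chart computation, and the degenerate chart where $u'$ is a unit), which is fine.
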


\begin{proof} (1) $u'=\sigma^{\rm c}(u)=u/y\in\sigma^{\rm c}({\cD}^{\mu-1}({\cI}))\subset {\cD}^{\mu-1}(\sigma^{\rm c}({\cI}))$.

(2) Since $u$ is one of the local parameters describing the center of the blow-up, $u'=u/y$ is a parameter; that
is, a function of order one.

 (3) follows from (2). 
 \end{proof}
 
\begin{definition} We will call a function $$u\in T({\cI})(U):={\cD}^{\mu-1}({\cI}(U))$$ \noindent of multiplicity one a {\it tangent direction} of $({\cI},\mu)$ on $U$.
\end{definition}

As a corollary from the above we obtain the following lemma.

\begin{lemma}(Giraud)\label{le: Giraud}
Let $u\in T({\cI})(U)$ be a tangent direction of $({\cI},\mu)$ on $U$. Then  for any multiple  blow-up  $(U_i)$ of $({\cI}_{|U},\mu)$, all the supports  of the induced marked ideals  $\supp(\cI_i,\mu)$ are contained in the strict transforms $V(u)_i$ of $V(u)$. 
\end{lemma}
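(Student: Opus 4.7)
The plan is to induct on the length $k$ of the multiple blow-up $(U_i)_{0\le i\le k}$, with the base case $k=0$ being exactly Lemma~\ref{le: Gi}(2): since $u\in T(\cI)(U)=\cD^{\mu-1}(\cI)(U)$ has order one, $\supp(\cI,\mu)\cap U\subset V(u)$.

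For the inductive step, assume the statement holds after $k$ blow-ups, so $\supp(\cI_k,\mu)\subset V(u)_k$, where $V(u)_k$ is the (iterated) strict transform of $V(u)$. The next center $C_k\subset\supp(\cI_k,\mu)$ is therefore contained in $V(u)_k$, putting us in position to apply the single-step Lemma~\ref{le: Gi} to the marked ideal $(\cI_k,\mu)$ together with the blow-up $\sigma_{k+1}:U_{k+1}\to U_k$. To carry this out, cover $U_k$ by open subsets $V$ on which the total exceptional divisor accumulated so far is principal, so that the iterated controlled transform $u_k:=\sigma_1^{\mathrm c}\cdots\sigma_k^{\mathrm c}(u)$ is defined on $V$ up to an invertible factor. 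Iterated application of Lemma~\ref{le: Gi}(1),(4) shows that $u_k\in\cD^{\mu-1}(\cI_k)(V)=T(\cI_k)(V)$ and $V(u_k)=V(u)_k\cap V$; in particular, $u_k$ is a tangent direction for $(\cI_k,\mu)$ on $V$.

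Now restrict to a chart $U'\subset\sigma_{k+1}^{-1}(V)$ on which the new exceptional divisor is cut out by a single function $y$, and set $u_{k+1}:=y^{-1}\sigma_{k+1}^{*}(u_k)$. Lemma~\ref{le: Gi}(3) applied to $(\cI_k,\mu)$ with tangent direction $u_k$ gives
\[
\supp(\cI_{k+1},\mu)\cap U'\subset V(u_{k+1}),
\]
while Lemma~\ref{le: Gi}(4) identifies $V(u_{k+1})$ with the restriction of the strict transform of $V(u_k)$ to $U'$, which in turn equals $V(u)_{k+1}\cap U'$. Covering $U_{k+1}$ by such charts and gluing completes the inductive step.

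The only subtle point, and the one step deserving attention, is to verify that the controlled transform $u_k$ remains a valid tangent direction at every stage (so that the single-step lemma is genuinely applicable); this is precisely what Lemma~\ref{le: Gi}(1) provides via the inclusion $\sigma^{\mathrm c}(\cD^{\mu-1}(\cI))\subset\cD^{\mu-1}(\sigma^{\mathrm c}(\cI))$, together with the fact that a controlled transform of an order-one function across a blow-up through its zero locus is again of order one. Once this observation is in place, the induction is purely formal.
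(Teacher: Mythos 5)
Your proposal is correct and follows exactly the route the paper intends: the paper states Lemma \ref{le: Giraud} as an immediate corollary of the single-blow-up Lemma \ref{le: Gi}, and your argument simply makes the iteration explicit, using parts (1), (3), (4) of that lemma (together with the fact, from its proof, that the controlled transform $u'=u/y$ is again a parameter, hence a tangent direction) to run the induction on the length of the multiple blow-up. The only detail worth adding is a one-line remark that each $(\cI_k,\mu)$ is again of maximal order (Lemma \ref{Vi2}), so the hypotheses of Lemma \ref{le: Gi} are met at every stage.
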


\begin{remarks}
(1)\, Tangent directions are functions locally defining hypersurfaces of maximal contact.

(2)\, The main problem leading to complexity of the proofs is that of noncanonical choice of
the tangent directions. We overcome this difficulty by introducing {\it homogenized ideals}.
\end{remarks}

\subsection{Arithmetical operations on marked ideals}
In this sections all marked ideals are defined for the smooth variety $X$ and the same
set of exceptional divisors $E$.
Define the following operations of addition and multiplication of marked ideals:
\begin{enumerate}
\item $({\cI},\mu_{\cI})+({\cJ},\mu_{\cJ}):=({\cI}^{\mu_{\cI}}+{\cJ}^{\mu_{\cI}},\mu_{\cI}\mu_{\cJ}),$
or, more generally, 
$$({\cI}_1,\mu_1)+\ldots+({\cI}_m,\mu_m):=({\cI}_1^{\mu_2\cdot\ldots\cdot\mu_m}+
{\cI}_2^{\mu_1\mu_3\cdot\ldots\cdot\mu_m}+\ldots+{\cI}_m^{\mu_1\ldots\mu_{k-1}} ,\mu_1\mu_2\ldots\mu_m)$$
(the operation of addition is not associative).
\item $(\cI,\mu_\cI) \cdot (\cJ,\mu_\cJ):=(\cI\cdot J,\mu_\cI+\mu_\cJ)$.
\end{enumerate}

\begin{lemma} \label{le: operations}
(1)\,
$\supp(({\cI}_1,\mu_1)+\ldots+({\cI}_m,\mu_m))= \supp({\cI}_1,\mu_1)\cap\ldots\cap\supp({\cI}_m,\mu_m)$. Moreover,  multiple  blow-ups $(X_k)$ of  $({\cI}_1,\mu_1)+\ldots+({\cI}_m,\mu_m)$ are exactly those which are simultaneous multiple  blow-ups for all $({\cI}_j,\mu_j)$,  and, for any $k$, we have the following equality for the controlled transforms $({\cI}_j,\mu_\cI)_k$: $$({\cI}_1,\mu_1)_k+\ldots+({\cI}_m,\mu_m)_k=[({\cI}_1,\mu_1)+\ldots+({\cI}_m,\mu_m)]_k.$$

(2)\,
$\supp(\cI,\mu_\cI)\cap\supp(\cJ,\mu_\cJ)\supseteq\supp((\cI,\mu_\cI)\cdot (\cJ,\mu_\cJ))$.
Moreover, any simultaneous multiple  blow-up $X_i$ of  both ideals $(\cI,\mu_\cI)$ and $(\cJ,\mu_\cJ)$ is a multiple  blow-up for
$(\cI,\mu_\cI)\cdot (\cJ,\mu_\cJ)$, and for the controlled transforms $(\cI_k,\mu_\cI)$ and $(\cJ_k,\mu_\cJ)$, we have the equality
$$(\cI_k,\mu_\cI)\cdot(\cJ_k,\mu_\cJ)=[(\cI,\mu_\cI)\cdot(\cJ,\mu_\cJ)]_k.$$
\end{lemma}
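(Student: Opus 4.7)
The plan is to split each statement into two parts: a pointwise characterization of supports, and a compatibility statement for controlled transforms that I would prove by induction on the length of the multiple blow-up, with the single-blow-up case handled by direct computation. The whole argument rests on the two pointwise identities $\ord_x(\cI\cdot\cJ)=\ord_x(\cI)+\ord_x(\cJ)$ and $\ord_x(\cI+\cJ)=\min\bigl(\ord_x(\cI),\ord_x(\cJ)\bigr)$, together with the multiplicativity of $\sigma^*$ and of multiplication by $\cI(D)^{-1}$ with respect to products of ideals.

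For the support characterization in (1), at any point $x$ I would compute
\[
\ord_x\bigl(\cI_1^{\mu_2\cdots\mu_m}+\cdots+\cI_m^{\mu_1\cdots\mu_{m-1}}\bigr)=\min_{1\le j\le m}\ord_x(\cI_j)\cdot\prod_{i\ne j}\mu_i,
\]
which is $\ge\mu_1\cdots\mu_m$ precisely when $\ord_x(\cI_j)\ge\mu_j$ for every $j$; this gives the claimed equality of supports. For (2), additivity of $\ord_x$ on products shows that $\ord_x(\cI)\ge\mu_\cI$ together with $\ord_x(\cJ)\ge\mu_\cJ$ imply $\ord_x(\cI\cdot\cJ)\ge\mu_\cI+\mu_\cJ$, which is exactly the support containment needed to make sense of the ``moreover'' clause (note that the reverse containment can fail, since an excess of order in one factor may compensate a deficit in the other).

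For compatibility with a single blow-up $\sigma\colon X\leftarrow X'$ with exceptional divisor $D$, the main algebraic fact in (1) is the identity
\[
\cI(D)^{-\mu_1\cdots\mu_m}\sigma^*\bigl(\cI_j^{\prod_{i\ne j}\mu_i}\bigr)=\bigl(\cI(D)^{-\mu_j}\sigma^*(\cI_j)\bigr)^{\prod_{i\ne j}\mu_i},
\]
which, summed over $j$, yields $\sigma^{\rm c}\bigl(\sum_j(\cI_j,\mu_j)\bigr)=\sum_j\sigma^{\rm c}(\cI_j,\mu_j)$. The analogue for products is the simpler $\cI(D)^{-(\mu_\cI+\mu_\cJ)}\sigma^*(\cI\cdot\cJ)=\bigl(\cI(D)^{-\mu_\cI}\sigma^*(\cI)\bigr)\cdot\bigl(\cI(D)^{-\mu_\cJ}\sigma^*(\cJ)\bigr)$. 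An induction on the length of the multiple blow-up, applying these single-step identities at every stage, then produces the full statements on controlled transforms.

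The one subtlety worth flagging is the well-foundedness of the induction at the level of \emph{admissibility} of the centers $C_i$. In (1), a center at stage $i$ is admissible for the sum iff it lies in $\supp\bigl([\sum_j(\cI_j,\mu_j)]_i\bigr)$; by the inductive hypothesis this controlled transform equals $\sum_j(\cI_j,\mu_j)_i$, and the pointwise support equality already established (applied to the transforms) identifies its support with $\bigcap_j\supp\bigl((\cI_j,\mu_j)_i\bigr)$, so admissibility for the sum is equivalent to simultaneous admissibility for the summands, closing the induction. For (2), I need only the one-way implication ``simultaneous admissibility implies admissibility for the product,'' which is immediate from the support containment above applied to the transforms. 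Beyond this bookkeeping, no real obstacle arises.
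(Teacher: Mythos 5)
The paper states this lemma without proof (it defers to \cite{Wlod}), so there is no in-paper argument to compare against; your proposal supplies the standard proof and it is correct. The ingredients are exactly the expected ones: the pointwise identities $\ord_x(\cI\cdot\cJ)=\ord_x(\cI)+\ord_x(\cJ)$ and $\ord_x(\cI+\cJ)=\min(\ord_x(\cI),\ord_x(\cJ))$ (valid because the local rings of a smooth variety are regular, so the associated graded ring is a domain and $\ord_x(\cI^k)=k\,\ord_x(\cI)$), Lemma \ref{bb} to guarantee that $\sigma^*(\cI_j^{\prod_{i\ne j}\mu_i})$ is divisible by $\cI(D)^{\mu_1\cdots\mu_m}$ so that your single-blow-up identities for $\sigma^{\rm c}$ make sense, and the induction on the length of the multiple blow-up in which admissibility of each center for the sum is identified, via the already-established support equality applied to the transforms, with simultaneous admissibility for the summands. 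One point deserves emphasis: in part (2) you prove $\supp(\cI,\mu_\cI)\cap\supp(\cJ,\mu_\cJ)\subseteq\supp((\cI,\mu_\cI)\cdot(\cJ,\mu_\cJ))$ and note that the opposite inclusion can fail (e.g. $\cI=(x^3)$, $\cJ=(x)$, $\mu_\cI=\mu_\cJ=2$ at the origin of $\bbA^1$). You are right: the inclusion sign as printed in the statement is reversed (compare the corresponding lemma in \cite{Wlod}), and the inclusion you prove is the one actually needed for the ``moreover'' clause, so your reading and your argument are the correct ones.
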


\subsection{Homogenized ideals and tangent directions\label{hom}}
Let $({\cI},\mu)$ be a marked ideal of maximal order. Set $T({\cI}):={\cD}^{\mu-1}{\cI}$.
By the {\it homogenized ideal} we mean $${\cH}({\cI},\mu):=({\cH}({\cI}),\mu)=({\cI}+{\cD}\cI\cdot T({\cI})+\ldots+{\cD}^i\cI\cdot T({\cI})^i+ \ldots+{\cD}^{\mu-1}\cI\cdot T({\cI})^{\mu-1},\mu)$$

\begin{remark}
A homogenized ideal has two important properties:
\begin{enumerate}
\item It is equivalent to the given ideal.
\item It ``looks the same'' from all possible tangent directions.
\end{enumerate}
By the first property we can   use the homogenized ideal to construct resolution via the Giraud Lemma \ref{le: Giraud}. By the second property such a construction does not depend on the choice of tangent directions.
\end{remark}

\begin{lemma} Let $({\cI},\mu)$ be a marked ideal of maximal order. Then
\begin{enumerate}
\item $({\cI},\mu)\simeq ({\cH}({\cI}),\mu)$ (see Definition 2.4.1).
\item For any multiple  blow-up $(X_k)$ of $(\cI,\mu)$, $$({\cH}({\cI}),\mu)_k=({\cI},\mu)_k+[{\cD}(\cI,\mu)]_k\cdot [(T({\cI}),1)]_k+\ldots [{\cD}^{\mu-1}(\cI,\mu)]_k\cdot +[(T({\cI}),1)]_k^{\mu-1}.$$
\end{enumerate}
\end{lemma}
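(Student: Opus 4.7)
My plan is to derive both parts by induction on the length $k$ of the multiple blow-up, exploiting the representation
\[
\cH(\cI,\mu)\;=\;\sum_{i=0}^{\mu-1}\,{\cD}^i(\cI,\mu)\cdot (T(\cI),1)^i
\]
(where $(T(\cI),1)={\cD}^{\mu-1}(\cI,\mu)$). Under this rewriting, part (2) becomes the assertion that the controlled transform commutes with the above sum of products, while part (1) asserts the resulting equality of supports at every stage.

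First I would verify, by induction on $k$, that the given multiple blow-up of $(\cI,\mu)$ is admissible for each ${\cD}^i(\cI,\mu)$ and for $(T(\cI),1)$. Assuming this holds through step $k-1$, Lemma \ref{le: inclusions} yields the inclusion $[{\cD}^i(\cI,\mu)]_k\subseteq{\cD}^i(\cI_k,\mu)$ of marked ideals; Lemma \ref{le: Vi1} then gives
\[
\supp(\cI_k,\mu)\;=\;\supp({\cD}^i(\cI_k,\mu))\;\subseteq\;\supp([{\cD}^i(\cI,\mu)]_k),
\]
and analogously for $(T(\cI),1)$. Because $C_k\subset\supp(\cI_k,\mu)$ by admissibility of the blow-up for $(\cI,\mu)$, the center $C_k$ lies in the supports of the transformed factors, closing the induction. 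Lemma \ref{le: operations} now gives part (2) directly: the blow-up is admissible for each product and for their sum (i.e., for $\cH(\cI,\mu)$), and the controlled transform commutes with both addition and multiplication of marked ideals.

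For part (1) I would prove $\supp(\cH(\cI)_k,\mu)=\supp(\cI_k,\mu)$ at each step. The inclusion $\supseteq$ is a direct order computation: for $x\in\supp(\cI_k,\mu)$, any generator of the $i$-th summand $[{\cD}^i(\cI,\mu)]_k\cdot[(T(\cI),1)]_k^i$ has the form $fg_1\cdots g_i$ with $f\in{\cD}^i\cI_k$ and each $g_l\in T(\cI_k)$ (using the transform inclusions established above), so by Lemma \ref{le: Vi1} we have $\ord_x(f)\ge\mu-i$ and $\ord_x(g_l)\ge 1$, giving total order at least $\mu$; hence $x\in\supp(\cH(\cI)_k,\mu)$. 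The reverse inclusion follows from $\cI_k\subseteq\cH(\cI)_k$, which is automatic since $(\cI,\mu)$ appears as the $i=0$ summand and controlled transform commutes with this sum by part (2). Matching the exceptional divisors $E_k$ on both sides, this yields $(\cI,\mu)\simeq(\cH(\cI),\mu)$.

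The one conceptual subtlety is the apparent circular dependence: the formula in part (2) is needed to extract the order data for part (1), while admissibility of the blow-up for $\cH(\cI,\mu)$ at later steps seems to require the support equality from earlier. This is resolved by the key observation that admissibility for the auxiliary factors ${\cD}^i(\cI,\mu)$ and $(T(\cI),1)$ depends only on admissibility for $(\cI,\mu)$ via Lemmas \ref{le: inclusions} and \ref{le: Vi1}, with no hypothesis on $\cH$ needed; once these are in place, Lemma \ref{le: operations} simultaneously delivers the formula of part (2) and the admissibility for $\cH(\cI,\mu)$.
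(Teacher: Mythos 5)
Your proof is correct, and it is the natural route given the paper's toolkit: the paper itself states this lemma without proof (deferring to \cite{Wlod}), but your argument runs exactly parallel to the paper's own proof of the analogous Lemma \ref{le: coeff} --- admissibility for the auxiliary factors ${\cD}^i({\cI},\mu)$ and $(T({\cI}),1)$ is inherited from admissibility for $({\cI},\mu)$ via Lemmas \ref{le: Vi1} and \ref{le: inclusions}, after which the transformation formula and the equality of supports follow, and your resolution of the apparent circularity is the right one. One point to tighten: Lemma \ref{le: operations}(1) is stated for the weighted addition $({\cI}_1^{\mu_2\cdots\mu_m}+\ldots,\mu_1\cdots\mu_m)$, whereas ${\cH}({\cI})$ is a plain sum of ideals all carrying the same marked multiplicity $\mu$; what you actually need is the easier observation that $\sigma^*$ distributes over plain sums and products and that ${\cI}(D)^{-\mu}={\cI}(D)^{-(\mu-i)}\cdot{\cI}(D)^{-i}$ can be factored out of each summand once the center lies in $\supp({\cD}^i{\cI}\cdot T({\cI})^i,\mu)\supseteq\supp({\cI},\mu)$, which is exactly the computation behind Lemma \ref{bb}, so the conclusion stands with the same inductive bookkeeping. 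Finally, for part (1) you should state explicitly that the induction runs in both directions at once: since the supports agree at every stage, a center admissible for $({\cH}({\cI}),\mu)_k$ is admissible for $({\cI},\mu)_k$ and vice versa, which is what condition (3) of the equivalence definition requires.
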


Although the following Lemmas  \ref{le: homo0} and \ref{le: homo} are used in this paper only in the case $E=\emptyset$, we formulate them in slightly more general versions.

\begin{lemma} \label{le: homo0} Let $(X,{\cI},E,\mu)$ be a marked ideal of maximal order.
Assume there exist   tangent directions $u,v\in T(\cI,\mu)_x={\cD}^{\mu-1}({\cI},\mu)_x$ at $x\in\supp({\cI},\mu)$ which are transversal to $E$.
Then there exists an automorphism $\widehat{\phi}_{uv}$ of the completion $\widehat{X}_x:=\Spec(\widehat{\cO}_{x,X})$ such that
\begin{enumerate}
\item  $\widehat{\phi}_{uv}^*({\cH}\widehat{\cI})_x=({\cH}\widehat{\cI})_x$;
\item  $\widehat{\phi}_{uv}^*(E)=E$;
\item  $\widehat{\phi}_{uv}^*(u)=v$;
\item  $\supp(\widehat{\cI},\mu):=V(T(\widehat{\cI},\mu))$ is contained in the fixed point set of $\phi$.
\end{enumerate}
\end{lemma}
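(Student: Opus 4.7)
The plan is to construct $\hat\phi_{uv}$ as a formal coordinate change sending $u$ to $v$ while fixing a system of complementary parameters compatible with $E$, and then to verify invariance of $\cH(\cI)$ via Taylor expansion and a Leibniz-rule bookkeeping argument tailored to the graded exponent pattern in the definition of $\cH$.  Concretely, let $x_1,\dots,x_k$ be local equations at $x$ for the components of $E$ through $x$.  Since both $u$ and $v$ have order $1$ and are transversal to $E$, the tuples $(u,x_1,\dots,x_k)$ and $(v,x_1,\dots,x_k)$ each have linearly independent differentials at $x$; over an algebraically closed field, an elementary cotangent-space argument then produces $u_{k+2},\dots,u_n$ so that \emph{both} $(u,x_1,\dots,x_k,u_{k+2},\dots,u_n)$ and $(v,x_1,\dots,x_k,u_{k+2},\dots,u_n)$ are regular systems of parameters.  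I define $\hat\phi_{uv}^\ast$ on $\widehat\cO_{x,X}$ as the continuous algebra automorphism determined by $u\mapsto v$, $x_i\mapsto x_i$, $u_j\mapsto u_j$, with inverse $\hat\phi_{vu}^\ast$.

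Properties (2), (3), (4) then come out quickly.  Property (3) is automatic from the definition.  For (2), each component of $E$ near $x$ is cut out by some $x_i$, which is fixed, so $E$ is preserved setwise.  For (4), writing $h:=v-u\in T(\cI)$, a point $p\in\widehat X_x$ is fixed iff $u(p)=v(p)$, i.e., iff $h(p)=0$, so the fixed locus equals $V(h)$; since $h\in T(\widehat\cI)$, this contains $V(T(\widehat\cI))=\supp(\widehat\cI,\mu)$ by Lemma~\ref{le: Vi1}.

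The main work is property (1).  Taylor expansion in the $u$-direction gives
\[
\hat\phi_{uv}^\ast f \;=\; f(v,x_i,u_j) \;=\; \sum_{k\ge 0}\frac{h^k}{k!}\,\partial_u^k f,\qquad f\in\widehat\cO_{x,X},
\]
a sum converging $\widehat m_x$-adically because $h\in\widehat m_x$.  It suffices to show that $\hat\phi_{uv}^\ast$ sends each generator $g\,t_1\cdots t_i$ of $\cD^i\widehat\cI\cdot T(\widehat\cI)^i$ into $\cH(\widehat\cI)_x$.  A Leibniz expansion writes $\partial_u^k(g\,t_1\cdots t_i)$ as a sum of terms $\partial_u^a g\cdot\prod_j\partial_u^{c_j}t_j$ with $a+\sum c_j=k$; using $\partial_u^a g\in\cD^{i+a}\widehat\cI$, the maximal-order identity $\partial_u^{c}t_j\in\cD^{\mu-1+c}\widehat\cI=\widehat\cO_x$ for $c\ge 1$, and $h^k\in T(\widehat\cI)^k$, each such term belongs to $\cD^{i+a}\widehat\cI\cdot T(\widehat\cI)^{k+s}$, where $s=|\{j:c_j=0\}|\ge i+a-k$, so the total $T$-exponent satisfies $k+s\ge i+a$.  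If $i+a\le\mu-1$, this lands in $\cD^{i+a}\widehat\cI\cdot T(\widehat\cI)^{i+a}\subseteq\cH(\widehat\cI)_x$; if $i+a\ge\mu$, then $\cD^{i+a}\widehat\cI=\widehat\cO_x$ and $k+s\ge\mu$, placing the term in $T(\widehat\cI)^\mu\subseteq\cD^{\mu-1}\widehat\cI\cdot T(\widehat\cI)^{\mu-1}\subseteq\cH(\widehat\cI)_x$.  Swapping $u$ and $v$ yields the reverse inclusion.  The main obstacle is precisely this order-bookkeeping --- one must verify that whenever differentiation collapses some $\cD^j\widehat\cI$ into $\widehat\cO_x$, the surviving $T(\widehat\cI)$-factors still accumulate to exponent at least $\mu$; the graded exponent pattern $\cD^i\cI\cdot T(\cI)^i$ in $\cH$ is designed precisely so this balance holds.
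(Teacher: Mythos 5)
Your construction and verification follow the paper's own proof: the same formal coordinate change $u\mapsto v$ fixing complementary parameters adapted to $E$, the same Taylor expansion of $\widehat{\phi}_{uv}^*(f)$ into terms lying in $\cD^i\widehat{\cI}\cdot T(\widehat{\cI})^i$, and the same treatment of (2)--(4) via $h=v-u\in\cD^{\mu-1}(\cI)$ vanishing on $\supp(\cI,\mu)$. Your Leibniz-rule bookkeeping, which checks that every graded piece $\cD^i\cI\cdot T(\cI)^i$ (not only $\cI$ itself) is carried into $\cH(\widehat{\cI})$, plus the symmetry argument for the reverse inclusion, is in fact a more complete verification of (1) than the abbreviated computation printed here, which only establishes $\widehat{\phi}_{uv}^*(\widehat{\cI})\subset\cH\widehat{\cI}$ and leaves the remaining details to \cite{Wlod}.
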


\begin{proof} (0) {\bf Construction of the automorphism $\widehat{\phi}_{uv}$.}
Find parameters $u_2,\ldots, u_n$ transversal to $u$ and $v$ such that
 $u=u_1,u_2,\ldots, u_n$  and $v,u_2,\ldots, u_n$ form  two sets of parameters at $x$ and divisors in $E$ are described by some parameters $u_i$ where $i\geq 2$.
Set \begin{equation}\widehat{\phi}_{uv}(u_1)=v, \quad \widehat{\phi}_{uv}(u_i)=u_i \quad\mbox{for} \quad i>1.\nonumber \end{equation}

(1)  Let $h:=v-u\in {T({\cI})}$. For any  $f\in \widehat{\cI}$,
 \begin{equation} \widehat{\phi}_{uv}^*(f)=f(u_1+h,u_2,\ldots,u_n)= f(u_1,\ldots,u_n)+\frac{\partial{f}}{\partial{u_1}}\cdot h+ \frac{1}{2!}
 \frac{\partial^2{f}}{\partial{u_1^2}}\cdot h^2+\ldots +\frac{1}{i!}
 \frac{\partial^i{f}}{\partial{u_1^i}}\cdot h^i +\ldots\nonumber \end{equation}
 The latter element belongs to \begin{equation}\widehat{\cI}+{\cD}\widehat{\cI}\cdot \widehat{T({\cI})}+\ldots +{\cD}^i\widehat{\cI}\cdot \widehat{T({\cI})}^i+ \ldots +{\cD}^{\mu-1}\widehat{\cI}\cdot \widehat{T({\cI})}^{\mu-1}={\cH}\widehat{\cI}.\nonumber\end{equation}
Hence $\widehat{\phi}_{uv}^*(\widehat{\cI})\subset {\cH}\widehat{\cI}$.

(2)(3) follow from the construction.

(4) The fixed point set of $\widehat{\phi}^*_{uv}$ is defined by $u_i=\widehat{\phi}^*_{uv}(u_i)$, $i=1,\ldots,n$;  that is, by $h=0$. But $h\in {\cD}^{\mu-1}({\cI})$ is $0$ on $\supp({\cI},\mu)$. 
\end{proof}

\begin{lemma} \label{le: homo} {\bf Glueing Lemma.} Let $(X,{\cI},E,\mu)$ be a marked ideal of maximal order for which there exist
tangent directions $u,v\in T({\cI},\mu)$ at  $x\in\supp({\cI},\mu)$ which are transversal to $E$.
Then there exist \'etale neighborhoods $\phi_{u},\phi_v: \overline{X}\to X$ of $x=\phi_u(\overline{x})=\phi_v(\overline{x}) \in X$,  where $\overline{x}\in \overline{X}$, such that
\begin{enumerate}
\item  $\phi_{u}^*({\cH}({\cI}))=\phi_{v}^*({\cH}({\cI}))$;
\item  $\phi_{u}^*(E)=\phi_{v}^*(E)$;
\item  $\phi_{u}^*(u)=\phi_{v}^*(v)$.
\end{enumerate}
Set $(\overline{X},\overline{\cI},\overline{E},\mu):=\phi_{u}^*(X,{\cH}({\cI}),E,\mu)=\phi_{v}^*(X,{\cH}({\cI}),E,\mu).$
\begin{enumerate}
\item[(4)]  For any $\overline{y}\in \supp(\overline{X},\overline{\cI},\overline{E},\mu)$, $\phi_u(\overline{y})=\phi_v(\overline{y})$.
\item[(5)]  For any multiple  blow-up $(X_i)$ of $({X},{\cI},\emptyset,\mu)$, the induced multiple  blow-ups $\phi_u^*(X_i)$ and $\phi_v^*(X_i)$ of $(\overline{X},\overline{\cI},\overline{E},\mu)$ are the same (defined by the same centers).
\end{enumerate}
Set $(\overline{X}_i):=\phi_u^*(X_i)=\phi_v^*(X_i)$.
\begin{enumerate}
\item[(6)] For any  $\overline{y}_i\in \supp(\overline{X}_i,\overline{\cI}_i,\overline{E}_i,\mu)$,
 $\phi_{u i}(\overline{y}_i)=\phi_{v i}(\overline{y}_i)$, where 
$\phi_{u i},\phi_{v i}:\overline{X}_i\to X_i$ are the induced morphisms.
\end{enumerate}
\end{lemma}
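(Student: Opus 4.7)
The plan is to algebraize the formal automorphism $\widehat{\phi}_{uv}$ of Lemma \ref{le: homo0} into an honest étale-local identification, using a fiber product over affine space. First, choose local parameters $u_2, \ldots, u_n$ at $x$ so that both $(u, u_2, \ldots, u_n)$ and $(v, u_2, \ldots, u_n)$ are regular systems of parameters at $x$ and each divisor in $E$ is cut out by some $u_i$ with $i \geq 2$; this is possible because $u$ and $v$ are both transversal to $E$. After shrinking $X$ to an affine open neighborhood $U$ of $x$, both $\psi := (u, u_2, \ldots, u_n)\colon U \to \bbA^n$ and $\psi' := (v, u_2, \ldots, u_n)\colon U \to \bbA^n$ are étale. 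Let $\overline{X}$ be the connected component of $(x,x)$ in the fiber product $U \times_{\bbA^n} U$ taken with respect to $\psi$ and $\psi'$, and let $\phi_u, \phi_v \colon \overline{X} \to U$ be the two projections, so that both are étale neighborhoods of $x$, meeting at $\overline{x} = (x,x)$. By construction $\phi_u^*(u) = \phi_v^*(v)$ and $\phi_u^*(u_i) = \phi_v^*(u_i)$ for $i \geq 2$, which yields (2) and (3) immediately.

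For (1), pass to completions at $\overline{x}$. Setting $t_1 := \phi_u^*(u) = \phi_v^*(v)$ and $t_i := \phi_u^*(u_i) = \phi_v^*(u_i)$, we have $\widehat{\cO}_{\overline{X},\overline{x}} \cong K[[t_1, \ldots, t_n]]$. The completed pullback $\widehat{\phi_u^*\cH(\cI)}$ is obtained by rewriting generators of $\cH(\cI)$ in the $u$-parameters with $u \mapsto t_1$, $u_i \mapsto t_i$, while $\widehat{\phi_v^*\cH(\cI)}$ comes from the same generators rewritten in the $v$-parameters. Lemma \ref{le: homo0}(1) says precisely that these two representations of $\cH(\widehat{\cI})$ in $K[[t_1, \ldots, t_n]]$ coincide as ideals, since $\widehat{\phi}_{uv}^*$ preserves $\cH(\widehat{\cI})$ and sends $u \mapsto v$, $u_i \mapsto u_i$. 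Faithful flatness of completion at a Noetherian local ring gives equality of stalks at $\overline{x}$, and a Zariski shrinking of $\overline{X}$ upgrades this to equality of sheaves, establishing (1); the common pullback defines $\overline{\cI}$.

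The geometric heart of (4) uses the vanishing of $h := v - u \in T(\cI)$ on $\supp(\cI,\mu)$, which holds because by Lemma \ref{le: Vi1} we have $\supp(\cI,\mu) = V(\cD^{\mu-1}(\cI)) = V(T(\cI))$. A point $\overline{y} = (a,b) \in \supp(\overline{\cI}, \mu)$ satisfies $a, b \in \supp(\cH(\cI),\mu) = \supp(\cI,\mu)$ together with $u(a) = v(b)$ and $u_i(a) = u_i(b)$ for $i \geq 2$. Vanishing of $h$ on the support gives $u(a) = v(a)$ and $u(b) = v(b)$, forcing $u(a) = u(b)$, hence $\psi(a) = \psi(b)$; since $\psi$ is étale and therefore unramified at $x$, after a final Zariski shrinking of $U$ we may assume $\psi$ is injective there, so $a = b$. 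Properties (5) and (6) then follow by induction on the length of the multiple blow-up: the first center $C_0 \subset \supp(\cI,\mu)$ pulls back identically under $\phi_u$ and $\phi_v$ by (4), so $\phi_u^*(X_1) = \phi_v^*(X_1)$ as blow-ups of $\overline{X}$; the induced pair $\phi_{u1}, \phi_{v1} \colon \overline{X}_1 \to X_1$ continues to satisfy the hypotheses because $\cD$, $T$, $\cH$ and the controlled transform all commute with étale base change, and the inductive step repeats.

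The chief technical point to watch is controlling the shrinkings required to promote stalk-level or formal statements to honest étale neighborhoods valid throughout the whole induction; this is harmless because multiple blow-ups restrict well to open sets containing the relevant supports, and only points of support are involved in (4)--(6). No deeper obstacle arises, since every ideal-theoretic operation entering $\cH(\cI)$ commutes with étale pullback.
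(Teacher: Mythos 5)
Your proposal follows the same route as the paper: you build $\overline{X}$ from the fiber product of the two \'etale parameter maps $\psi=(u,u_2,\ldots,u_n)$ and $\psi'=(v,u_2,\ldots,u_n)$ to $\bA^n$, deduce (2) and (3) from the construction, prove (1) by reducing to the formal symmetry of the homogenized ideal in Lemma \ref{le: homo0} (your completion/faithful-flatness argument is a correct filling-in of the paper's ``follow from the construction''), obtain (4) from the vanishing of $h=v-u\in T(\cI)$ on $\supp(\cI,\mu)$, and get (5)--(6) by lifting centers inductively, at about the same level of detail as the paper, which defers those details to \cite{Wlod}.

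There is, however, a genuine gap in your proof of (4): having shown $\psi(a)=\psi(b)$ for a support point $\overline{y}=(a,b)$, you conclude $a=b$ by asserting that, since $\psi$ is \'etale, ``after a final Zariski shrinking of $U$ we may assume $\psi$ is injective there.'' That claim is false: an \'etale morphism need not become injective on any Zariski-open subset (the squaring map on $\GG_m$ is the standard example, and it can occur in the present setting, e.g.\ with a parameter such as $u_2=s^2$ on $\{s\neq 0\}$; in such an example the full fiber product really does contain points of $\supp(\overline{\cI},\mu)$ with distinct images under $\phi_u$ and $\phi_v$, so some restriction is genuinely needed and it cannot be achieved by making $\psi$ injective on $U$). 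The correct repair is to shrink $\overline{X}$ around $\overline{x}$, not $U$: writing $S=\supp(\cI,\mu)$, the map $\psi|_S$ is unramified and separated, so the diagonal $\Delta_S$ is open and closed in $S\times_{\bA^n}S$; your computation shows that every ``bad'' point lies in the closed set $(S\times_{\bA^n}S)\setminus\Delta_S$, which does not contain $\overline{x}=(x,x)$, and deleting it from $\overline{X}$ yields (4) while preserving (1)--(3). (Equivalently, one restricts to the locus where $\phi_u=\phi_v$, which is what the coincidence on $\phi_u^{-1}(V(h))$ asserted in the paper and in \cite{Wlod} amounts to.) With that replacement, and keeping in mind that the persistence of the coincidence property under the blow-ups in (5)--(6) still needs the \'etale-compatibility argument you sketch, your proof matches the paper's.
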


\begin{proof}
(0) {\bf Construction of  \'etale neighborhoods  ${\phi}_{u}, {\phi}_{v}: U\to X$.}
Let $U\subset X$ be an open subset for which there exist $u_2,\ldots, u_n$ which are transversal to $u$ and $v$ on $U$ such  that
 $u=u_1,u_2,\ldots, u_n$  and $v,u_2,\ldots, u_n$ form  two sets of parameters on $U$, and the divisors in $E$ are described by some $u_i$, where $i\geq 2$.
 Let ${\bf A}^n$ be the affine space with coordinates $x_1,\ldots,x_n$.
First  construct \'etale morphisms $\phi_1,\phi_2: U\to {\bf A}^n$ with
 \begin{equation} {\phi}^*_{1}(x_i)=u_i \quad \textrm{for all } i \quad\quad \mbox{and} \quad
{\phi}^*_{2}(x_1)=v, \quad {\phi}^*_{2}(x_i)=u_i\quad\mbox{for} \quad i>1.\nonumber \end{equation}
Then \begin{equation} \overline{X}:=U\times_{{\bf A}^n}U \nonumber \end{equation} \noindent is a fiber product for  the morphisms $\phi_1$ and $\phi_2$. The morphisms $\phi_u$, $\phi_v$ are defined to be the natural projections $\phi_u, \phi_v: \overline{X}\to U$ such that $\phi_1\phi_u=\phi_2\phi_v$. Set
\begin{align*} w_1&:=\phi_u^*(u)=(\phi_1\phi_u)^*(x_1)=(\phi_2\phi_v)^*(x_1)=\phi_v^*(v), \\
w_i&:=\phi_u^*(u_i)=\phi_v^*(u_i),\quad  \textrm{for $i\geq 2$}. 
\end{align*}
 
(1), (2), (3) follow from the construction.

(4) Let $h:=v-u$. By the above the morphisms $\phi_u$ and $\phi_v$ coincide on $\phi_u^{-1}(V(h))=\phi_v^{-1}(V(h))$.

By (4), a blow-up of a center $C\subset \supp(\cH(\cI))$  lifts to the  blow-ups at the
same center $\phi_u^{-1}(C)=\phi_v^{-1}(C)$. Thus (5), (6) follow (see \cite{Wlod}
for details).
\end{proof}

\subsection{Coefficient ideals and Giraud Lemma}\label{Coefficient}
The idea of a coefficient ideal was originated by Hironaka and then developed in papers of Villamayor and Bierstone-Milman.

\begin{example} {\bf Motivating example.} Assume that $u=0$ defines locally a hypersurface of maximal contact.
Consider a coordinate system $u=u_1,u_2,\ldots,u_n$. Write any function $f\in (\cI,\mu)$ as follows
$$ f:=c_{\mu,f}\cdot u^{\mu}+ c_{\mu-1,f}(u_2,\ldots,u_n)u^{\mu-1}+\ldots+c_{0,f}((u_2,\ldots,u_n)$$
Then it can be  easily seen that
$$\ord_x(f)\geq \mu \quad \Leftrightarrow \quad \ord_x(c_{\mu-i,f})\geq \mu-i\quad\mbox{for}\quad\mbox{all}\quad i=1,\ldots,\mu.$$
In other words, $$\supp(\cI,\mu)=\supp({\rm Coeff}_{V(u)}(\cI,\mu)),$$
where $${\rm Coeff}_{V(u)}(\cI,\mu):= \{((c_{\mu-i,f|V(u)})_{f\in \cI},\mu-i)| i=1,\ldots,\mu\}.$$
Here ${\rm Coeff}_{V(u)}(\cI,\mu)$ can be considered as a very first definition of coefficient ideal. It allows one to reduce resolution of $(\cI,\mu)$ to a resolution of ${\rm Coeff}_{V(u)}(\cI,\mu)$ ``living" on a  hypersurface of maximal contact.
One of the problems here is that this definition depends on a choice of coordinates. That is why we replace it with the definition below.

It is important to observe that the controlled transformed preserves the  form above:
$$ \sigma^c(f,\mu):=c_{\mu,f}\cdot u'^{\mu}+ c'_{\mu-1,f}(u'_2,\ldots,u'_n)u'^{\mu-1}+\ldots+c'_{0,f}((u'_2,\ldots,u'_n),$$
where $u'=\sigma^c(u,1), c'_{\mu-i,f}=\sigma^c(c_{\mu-i,f},\mu-i).$
In other words
$$\supp (\sigma^c(\cI,\mu))=\supp (\sigma^c({\rm Coeff}_{V(u)}(\cI,\mu))).$$
\end{example}

The following definition modifies and generalizes the definition of Villamayor.

\begin{definition}
Let $({\cI},\mu)$ be a marked ideal of maximal order. By the
 {\it coefficient ideal}  we mean
$${\cC}({\cI},\mu)=\sum_{i=1}^\mu ({\cD}^i{\cI},\mu-i).$$
\end{definition}

\begin{remark} The coefficient ideal $\cC(\cI)$ has two important properties:
\begin{enumerate}
\item  $\cC(\cI)$ is equivalent to  $\cI$.
\item The intersection  of the support of $(\cI,\mu)$ with any smooth subvariety $S$ is the support of the restriction of $\cC(\cI)$ to $S$:
$$\supp(\cI)\cap S=\supp(\cC(\cI)_{|S}).$$
\noindent Moreover this condition is persistent under relevant multiple  blow-ups.
\end{enumerate}
These properties allow one to control and modify the part of support of $(\cI,\mu)$ contained in $S$ by applying multiple  blow-ups of  $\cC(\cI)_{|S}$.
\end{remark}

\begin{lemma}\label{le: coeff}
 ${\cC}({\cI},\mu)\simeq({\cI},\mu)$.
\end{lemma}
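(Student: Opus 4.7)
My plan is to verify the three conditions in the equivalence relation for marked ideals: identity of the ordered exceptional divisors, coincidence of supports at stage zero, and coincidence of supports under every common multiple blow-up (with the attendant agreement of admissible centers). The first condition is automatic since $\cC(\cI,\mu)$ and $(\cI,\mu)$ are built on the same ambient data $(X,E)$. For the second, I would combine Lemma~\ref{le: Vi1}---which gives $\supp(\cD^i\cI,\mu-i) = \supp(\cI,\mu)$ whenever $\mu-i\ge 1$---with Lemma~\ref{le: operations}(1), which identifies the support of a sum of marked ideals with the intersection of the supports of the summands. These two facts collapse $\supp(\cC(\cI,\mu))$ to $\supp(\cI,\mu)$, after noting that any summand of the form $(\cD^\mu\cI,0)$ contributes $X$ to the intersection and is therefore harmless.

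For the third condition, I would induct on the length $k$ of an admissible multiple blow-up, the base case being the stage-zero equality just established. At the inductive step, Lemma~\ref{le: operations}(1) distributes the controlled transform over the sum defining $\cC(\cI,\mu)$, so $\cC(\cI,\mu)_k = \sum_i (\cD^i\cI,\mu-i)_k$. Iterated application of Lemma~\ref{le: inclusions} then gives the one-sided containment $(\cD^i\cI,\mu-i)_k \subseteq \cD^i((\cI,\mu)_k)$. Combining this with Lemma~\ref{le: Vi1} applied now to the transformed marked ideal $(\cI,\mu)_k$ yields
$\supp((\cD^i\cI,\mu-i)_k)\supseteq \supp(\cD^i((\cI,\mu)_k),\mu-i) = \supp((\cI,\mu)_k)$
for every $i$ with $\mu-i\ge 1$. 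Intersecting over $i$ produces the containment $\supp(\cC(\cI,\mu)_k) \supseteq \supp((\cI,\mu)_k)$.

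The main obstacle is the reverse inclusion $\supp(\cC(\cI,\mu)_k)\subseteq\supp((\cI,\mu)_k)$. The difficulty is that Lemma~\ref{le: inclusions} only controls $\sigma^c\cD^i\cI$ from above by $\cD^i\sigma^c\cI$, not from below, so the orders of the transformed derivative ideals can in principle exceed the order of $(\cI)_k$ without forcing $x\in\supp((\cI,\mu)_k)$. I would circumvent this by noting that $(\cI,\mu)=(\cD^0\cI,\mu)$ should itself be understood as a summand of the coefficient ideal (shifting the index range to $i=0,\ldots,\mu-1$, which is the only reading consistent with the motivating example and with Lemma~\ref{le: operations}'s multiplicative formula for sums). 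Once $(\cI,\mu)$ is literally among the summands, Lemma~\ref{le: operations}(1) makes the reverse inclusion $\supp(\cC(\cI,\mu)_k)\subseteq\supp((\cI,\mu)_k)$ immediate, and the two classes of admissible multiple blow-ups then coincide stage by stage, closing the induction and completing the verification of the equivalence.
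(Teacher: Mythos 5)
Your proof is correct and follows essentially the same route as the paper's: it rests on Lemma \ref{le: operations} (supports and controlled transforms of sums), Lemma \ref{le: inclusions}, and Lemma \ref{le: Vi1}, with the reverse inclusion supplied by treating $(\cI,\mu)$ itself as a summand of $\cC(\cI,\mu)$ --- exactly the $0\le i\le \mu-1$ indexing that the paper's own proof uses. You are merely more explicit about the induction on the length of the blow-up sequence and about the one-sidedness of Lemma \ref{le: inclusions}.
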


\begin{proof} By Lemma \ref{le: operations}, multiple  blow-ups of $\cC(\cI,\mu)$ are simultaneous multiple  blow-ups of $\cD^i(\cI,\mu)$ for $0\leq i\leq \mu-1$. By Lemma \ref{le: inclusions}, multiple  blow-ups of $(\cI,\mu)$ define  multiple  blow-ups of all $\cD^i(\cI,\mu)$.
Thus multiple  blow-ups of $(\cI,\mu)$ and $\cC(\cI,\mu)$ are the same and $\supp({\cC}({\cI},\mu))_k=\bigcap \supp({\cD}^i{\cI},\mu-i)_k=\supp({\cI}_k,\mu).$
\end{proof}

\begin{lemma} \label{le: S} Let $(X,{\cI},E,\mu)$ be a marked ideal of maximal order.  Assume that $S$ has only simple normal crossings with $E$. Then
$$\supp({\cI},\mu)\cap S= \supp({\cC}({\cI},\mu)_{|S}).$$
Moreover  let  $(X_i)$ be  a multiple blow-up with  centers $C_i$   contained in the strict transforms $S_i\subset X_i$ of $S$. Then:
\begin{enumerate}
\item The restrictions  $\sigma_{i|S_i}: S_i\to S_{i-1}$ of the morphisms $\sigma_i: X_i\to X_{i-1}$ define
  a multiple  blow-up $(S_i)$
of ${\cC}({\cI},\mu)_{|S}$.
\item $\supp({\cI}_i,\mu)\cap S_i= \supp[{\cC}({\cI},\mu)_{|S}]_i.$
\item Every multiple  blow-up $(S_i)$  of ${\cC}({\cI},\mu)_{|S}$  defines  a multiple  blow-up $(X_i)$ of $({\cI},\mu)$ with  centers $C_i$ contained in the strict transforms $S_i\subset X_i$ of $S\subset X$.
\end{enumerate}
\end{lemma}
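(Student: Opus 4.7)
My plan is to establish the pointwise equality $\supp(\cI,\mu) \cap S = \supp(\cC(\cI,\mu)_{|S})$ first, and then extend it through the multiple blow-up by induction on length, using two commutativity principles.

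For the base case, fix a point $x \in S$ and choose local parameters $u_1, \ldots, u_n$ at $x$ with $S = V(u_1, \ldots, u_c)$, which is possible because $S$ is smooth. Any $f \in \cI_x$ expands (in the completion) as $f = \sum_\alpha c_\alpha(u_{c+1}, \ldots, u_n)\, u_1^{\alpha_1} \cdots u_c^{\alpha_c}$, and a direct computation yields $(\partial^{|\alpha|} f / \partial u_1^{\alpha_1} \cdots \partial u_c^{\alpha_c})|_S = \alpha!\, c_\alpha$. Hence $c_\alpha \in \cD^{|\alpha|}\cI|_S$ whenever $|\alpha| < \mu$, and the bounds $\ord_x^S(c_\alpha) \geq \mu - |\alpha|$ forced by $\ord_x^S(\cD^r \cI|_S) \geq \mu - r$ give $\ord_x(f) \geq \mu$ term by term. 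Conversely, if $\ord_x(\cI) \geq \mu$ then by Lemma \ref{le: Vi1} $\ord_x(\cD^r \cI) \geq \mu-r$, which passes to $\ord_x^S(\cD^r \cI|_S) \geq \mu - r$ because restriction can only increase order. This gives the displayed equality.

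For the inductive step I would rely on two local geometric facts, both verified in a coordinate chart adapted to $C_{i-1} \subset S_{i-1}$: (a) the blow-up $\sigma_i$ of $X_{i-1}$ at $C_{i-1}$ restricts on the strict transform $S_i$ of $S_{i-1}$ to the blow-up of $S_{i-1}$ at $C_{i-1}$, with $S_i$ smooth and retaining SNC with $E_i$; and (b) controlled transform commutes with restriction, $[\sigma_i^c(\cJ,\nu)]_{|S_i} = (\sigma_{i|S_i})^c(\cJ_{|S_{i-1}}, \nu)$ for any marked ideal $(\cJ,\nu)$ on $X_{i-1}$. Iterating (b) over $r$, and using that the additive operation on marked ideals commutes with both restriction and controlled transform, yields the identity $[\cC(\cI,\mu)_{|S}]_i = \bigl([\cC(\cI,\mu)]_i\bigr)_{|S_i}$ as marked ideals on $S_i$. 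Combining this with the ambient equivalence $\cC(\cI,\mu) \simeq (\cI,\mu)$ from Lemma \ref{le: coeff} (which supplies $\supp[\cC(\cI,\mu)]_i = \supp(\cI_i,\mu)$ on $X_i$) and with the base equality applied at level $i$ to $(\cI_i,\mu)$ and $S_i$ (whose hypotheses persist by Lemma \ref{Vi2} and a standard SNC-preservation check) delivers part (2), $\supp[\cC(\cI,\mu)_{|S}]_i = \supp(\cI_i,\mu) \cap S_i$.

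Parts (1) and (3) are then bookkeeping on top of the induction. For (1), the centers $C_i$ of the given multiple blow-up lie in $S_i \cap \supp(\cI_i,\mu) = \supp[\cC(\cI,\mu)_{|S}]_i$ by part (2) and have SNC with the trace of $E_i$ on $S_i$; together with (a) this verifies all the conditions defining $(S_i)$ as a multiple blow-up of $\cC(\cI,\mu)_{|S}$. Part (3) runs the same induction in reverse: given centers $C_i \subset \supp[\cC(\cI,\mu)_{|S}]_i$, part (2) places them inside $\supp(\cI_i,\mu)$, so performing the same blow-ups ambiently on $X_i$ produces a multiple blow-up of $(\cI,\mu)$. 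The main obstacle I anticipate is the mismatch between ``differentiate then transform'' and ``transform then differentiate'', since Lemma \ref{le: inclusions} provides only the inclusion $\sigma^c(\cD^r \cI, \mu-r) \subseteq \cD^r \sigma^c(\cI,\mu)$; I would sidestep this by working at the level of supports throughout, using the ambient equivalence of $\cC$ with the underlying ideal as the bridge rather than attempting a direct ideal-theoretic comparison of the two sides.
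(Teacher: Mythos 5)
Your base case is exactly the paper's argument: the inclusion $\supp(\cI,\mu)\cap S\subseteq\supp(\cC(\cI,\mu)_{|S})$ via Lemma \ref{le: coeff} plus the fact that restriction only raises order, and the reverse inclusion via the coordinate expansion $f=\sum c_{\alpha f}(y)x^{\alpha}$ with $c_{\alpha f|S}=\bigl(\tfrac{1}{\alpha!}\partial^{|\alpha|}f/\partial x^{\alpha}\bigr)_{|S}\in\cD^{|\alpha|}(\cI)_{|S}$. Your commutation facts (a) and (b) are also correct: blow-up at $C\subseteq S$ restricts on the strict transform to the blow-up of $S$, and since $\cI(D)_{|S_i}$ is the exceptional ideal of $S_i\to S_{i-1}$ and full transform commutes with restriction, one gets $[\cC(\cI,\mu)_{|S}]_i=\bigl([\cC(\cI,\mu)]_i\bigr)_{|S_i}$ (using Lemma \ref{le: operations} for the sum). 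Up to this point you have faithfully reduced part (2) to the statement $\supp\bigl(([\cC(\cI,\mu)]_i)_{|S_i}\bigr)=\supp(\cI_i,\mu)\cap S_i$.

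The gap is in how you close that statement. The combination you invoke --- the equivalence $\supp[\cC(\cI,\mu)]_i=\supp(\cI_i,\mu)$ together with ``the base equality applied at level $i$ to $(\cI_i,\mu)$ and $S_i$'' --- only yields the easy inclusion. The level-$i$ base equality concerns $\cC(\cI_i,\mu)$, the coefficient ideal of the transform, whereas your object is $[\cC(\cI,\mu)]_i$, the transform of the coefficient ideal; Lemma \ref{le: inclusions} gives $[\cD^r(\cI,\mu)]_i\subseteq\cD^r(\cI_i,\mu)$, hence $[\cC(\cI,\mu)]_i\subseteq\cC(\cI_i,\mu)$, and a smaller ideal has a \emph{larger} support, so restricting and taking supports produces $\supp\bigl(([\cC(\cI,\mu)]_i)_{|S_i}\bigr)\supseteq\supp(\cI_i,\mu)\cap S_i$ --- the direction you already had. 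The hard inclusion $\supp\bigl(([\cC(\cI,\mu)]_i)_{|S_i}\bigr)\subseteq\supp(\cI_i,\mu)\cap S_i$ does not follow from any of the cited lemmas, and ``working at the level of supports'' does not sidestep the mismatch: the equivalence of Lemma \ref{le: coeff} controls the support of $[\cC(\cI,\mu)]_i$ on $X_i$ but says nothing about the order of its \emph{restriction} to $S_i$, which a priori can be large at points where the ambient order is below the marking. What is actually needed is the persistence of the coefficient expansion under controlled transform --- the observation in the paper's motivating example that $\sigma^{\rm c}(f)$ again has the form $\sum\sigma^{\rm c}(c_{\mu-i,f})\,u'^{\,\mu-i}$ in adapted coordinates, so that the transformed coefficients still bound $\ord(\cI_i)$ along $S_i$. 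This is precisely the content the paper itself does not reprove but defers to \cite{Wlod} (``the above relation is preserved by multiple blow-ups''), and it must be supplied for your induction to go through; parts (1) and (3) are then routine as you describe, but they stand or fall with this missing step.
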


\begin{proof}
By Lemma \ref{le: coeff}, $\supp({\cI},\mu)\cap S= \supp ({\cC}({\cI},\mu))\cap S\subseteq\supp({\cC}({\cI},\mu)_{|S})
$.
Let $x_1,\ldots,x_k,y_1,\ldots, y_{n-k}$ be local parameters at $x$ such that $\{x_1=0,\ldots,x_k=0\}$ describes $S$. Then any function $f\in {\cI}$ can be written as
$$f=\sum c_{\alpha f}(y) x^\alpha,$$\noindent where $c_{\alpha f}(y)$ are formal power series in $y_i$.

Now $x\in \supp({\cI},\mu)\cap S$ iff $\ord_x(c_\alpha)\geq \mu-|\alpha|$ for all $f\in {\cI}$ and $|\alpha|\leq \mu$. Note that $$c_{\alpha f|S}=\bigg(\frac{1}{\alpha!}\frac{\partial^{|\alpha|}(f)}{\partial x^\alpha}\bigg)_{|S}\in {\cD}^{|\alpha|}({\cI})_{|S}$$ and consequently  $\supp({\cI},\mu)\cap S=\bigcap_{f\in {\cI}, |\alpha|\leq \mu}\supp({c_{\alpha f|S}}, \mu-|\alpha|)\supset  \supp({\cC}({\cI},\mu)_{|S})$.

The above relation is preserved by multiple  blow-ups of $({\cI},\mu)$. For details
see \cite{Wlod}.
\end{proof}

\begin{lemma} \label{le: etale3} Let $\phi: X'\to X$ be an \'etale morphism of smooth varieties
and let $(X,{\cI},\emptyset,\mu)$ be a marked ideal. Then
\begin{enumerate}
\item $\phi^*({\cD}({\cI}))={\cD}(\phi^*({\cI}))$;
\item $\phi^*({\cH}({\cI}))={\cH}(\phi^*({\cI}))$;
\item $\phi^*({\cC}({\cI}))={\cC}(\phi^*({\cI}))$.
\end{enumerate}
\end{lemma}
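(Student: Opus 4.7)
My plan is to focus on item (1), as (2) and (3) will follow from it by purely formal manipulations with the ideal-theoretic operations $+$ and $\cdot$ that were introduced in the previous subsection.

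The key input for (1) is that an \'etale morphism $\phi: X' \to X$ satisfies $\Omega_{X'/X}=0$, which via the conormal exact sequence yields a canonical isomorphism $\phi^*\Omega_{X/K} \isomto \Omega_{X'/K}$. Dually, every $K$-derivation $\partial$ on $\cO_X$ lifts uniquely to a $K$-derivation $\widetilde{\partial}$ on $\cO_{X'}$ compatible with $\phi$, and this lift satisfies the \'etale chain rule $\widetilde{\partial}(\phi^*f) = \phi^*(\partial f)$. Working locally on an open $U \subset X$ on which $\cI$ has generators $f_1,\dots,f_s$ and there exist local parameters $u_1,\dots,u_n$, I would choose generators $\partial/\partial u_i$ of $\Der_K(\cO_U)$; their unique lifts $D_i$ then form local generators of $\Der_K(\cO_{\phi^{-1}(U)})$ dual to $\phi^*(du_i)$, and $D_i(\phi^*f_j) = \phi^*(\partial f_j/\partial u_i)$. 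Since $\cD(\cI)$ is locally generated by the $f_j$ together with the $\partial f_j/\partial u_i$, the pullback $\phi^*(\cD(\cI))$ is locally generated by the $\phi^*f_j$ and $\phi^*(\partial f_j/\partial u_i) = D_i(\phi^*f_j)$, which is exactly a local generating set of $\cD(\phi^*(\cI))$. This proves (1).

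For (2) and (3), I would first iterate (1) to conclude $\phi^*(\cD^i \cI) = \cD^i(\phi^*\cI)$ for every $i \geq 0$ by induction, and in particular $\phi^*(T(\cI)) = T(\phi^*\cI)$. Then I invoke the facts that inverse image of ideals under any morphism commutes with sums and products of ideals, i.e., $\phi^*(\cA + \cB) = \phi^*\cA + \phi^*\cB$ and $\phi^*(\cA \cdot \cB) = \phi^*\cA \cdot \phi^*\cB$, and similarly $\phi^*(\cA^k) = (\phi^*\cA)^k$. Unwinding the definitions,
\[
\cH(\cI) = \cI + \cD\cI\cdot T(\cI) + \cdots + \cD^{\mu-1}\cI \cdot T(\cI)^{\mu-1},
\]
so applying $\phi^*$ term by term gives $\phi^*(\cH(\cI)) = \cH(\phi^*\cI)$. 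The same argument applied to the definition $\cC(\cI,\mu) = \sum_{i=1}^{\mu}(\cD^i\cI,\mu-i)$, using that the marked-ideal sum is built out of integer powers and ordinary sums of ideals (with the weight $\mu$ unchanged by pullback), yields $\phi^*(\cC(\cI)) = \cC(\phi^*\cI)$.

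The main obstacle, and the only nontrivial step, is the \'etale chain rule $D_i(\phi^*f) = \phi^*(\partial f/\partial u_i)$ used in (1); everything else is formal. This chain rule is a standard consequence of the unique lifting of derivations through an unramified morphism, but it is important to check that the \emph{same} $u_i$'s can be used on both sides, which is justified by the isomorphism $\phi^*\Omega_{X/K} \isomto \Omega_{X'/K}$ that identifies $du_i$ with $\phi^*(du_i)$ as local bases of the cotangent sheaves.
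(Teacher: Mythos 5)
Your proposal is correct, but it argues part (1) by a somewhat different (and more explicit) route than the paper. The paper's proof is a one-line reduction to the formal-local situation: for each $x\in X$ the induced map on completed local rings $\widehat{\phi_x^*}$ is an isomorphism, the formation of ${\cD}$ is compatible with completion, and equality of the two coherent ideal sheaves can be checked after completion; parts (2) and (3) are then declared to follow from (1), exactly as you spell out via compatibility of $\phi^*$ with sums, products and powers of ideals (and with the unchanged marks) applied to the defining formulas of ${\cH}$ and ${\cC}$. You instead avoid completions altogether: you use $\Omega_{X'/X}=0$ to identify $\phi^*\Omega_{X/K}$ with $\Omega_{X'/K}$, lift the derivations $\partial/\partial u_i$ uniquely to derivations $D_i$ on $X'$, and verify the chain rule $D_i(\phi^*f)=\phi^*(\partial f/\partial u_i)$, so that the standard local generating set of ${\cD}({\cI})$ (generators of ${\cI}$ plus their first partials, as recorded in \S\ref{der}) pulls back to a generating set of ${\cD}(\phi^*({\cI}))$. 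Both arguments are sound; the paper's is shorter but leans implicitly on faithful flatness of completion and on ${\cD}$ commuting with completion, whereas yours makes the local verification explicit and self-contained, at the cost of a slightly longer computation. Your correct insistence that the \emph{same} parameters $u_i$ serve on both sides (their pullbacks being parameters on $X'$ because $\phi$ is \'etale) is precisely the point that the paper's completion argument encodes implicitly.
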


\begin{proof}
 Note that for any point $x\in X$ the completion $\widehat{\phi_x^*}$ is an isomorphism.
Thus $\widehat{\phi_x^*}\widehat{({\cD}({\cI}))}=\widehat{{\cD}(({\cI}))}$ and therefore $\phi^*({\cD}({\cI}))={\cD}(\phi^*({\cI})).$ (2) and (3) follow from (1).
\end{proof}


\section{Resolution algorithm}\label{Resolution algorithm}

The presentation of the following Hironaka resolution algorithm  builds upon
 Bierstone-Milman's, Villamayor's and Wlodarczyk's algorithms which are simplifications of the original Hironaka proof. We also use Koll\'ar's trick allowing to completely eliminate the use of invariants.

\begin{remarks} (1)\, Note that a blow-up of codimension one components  is an isomorphism. However it defines a nontrivial transformation of marked ideals. The inverse image of the center is still called the exceptional divisor.

(2)\, In the actual desingularization process, this kind of blow-up may occur for some marked ideals induced on subvarieties of ambient varieties. Though they define isomorphisms of these subvarieties, they determine blow-ups of ambient
varieties which are not isomorphisms.

(3)\, The  blow-up of the center $C$ which coincides with the whole variety $X$ is an empty set.
The main feature which characterizes the notion of blow-up is the following ``restriction property'':
\begin{itemize}
\item[]If $X$ is a smooth variety containing a smooth subvariety $Y\subset X$, which contains the center $C\subset Y$
then the blow-up $\sigma_{C,Y}:\tilde{Y}\to Y$ at $C$ coincides with the strict transform of $Y$ under the blow-up $\sigma_{C,X}:\tilde{X}\to X$, i.e.,
$$\tilde{Y}\simeq \overline{\sigma_{C,X}^{-1}(Y\setminus C).}$$
\end{itemize}
\end{remarks}

{\bf Inductive setup.} Let $(X,{\cI},E,\mu)$ denote an arbitrary marked ideal. We will present 
an algorithm which establishes the following assertion, by induction on the dimension of  
$X$. 

\begin{theorem}\label{main}
There is an associated  resolution $(X_i)_{0\leq i\leq m_X}$, called \underline{canonical},
 satisfying the following conditions:
\begin{enumerate}
\item For any  surjective \'etale morphism $\phi: X '\to X$ the induced sequence
 $(X'_i)=\phi^*(X_i)$ is  the canonical resolution of $(X',{\cI}',E',\mu):=\phi^*(X,{\cI},E,\mu)$.
\item For any  \'etale  mophism $\phi: M'\to M$ the induced sequence
 $(X'_i)=\phi^*(X_{i})$ is an extension   of the canonical resolution of  $(X',{\cI}',E',\mu):=\phi^*(X,{\cI},E,\mu)$.
\end{enumerate}
\end{theorem}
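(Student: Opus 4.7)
The argument proceeds by induction on $m=\dim X$. The base case $m=0$ is immediate: a zero-dimensional smooth variety admits no nonempty proper closed subsets of positive codimension, so either $\supp(\cI,\mu)$ is already empty, or it equals $X$ and by the convention of Remark~1.4 we formally blow up $X$ itself to obtain the empty scheme, finishing the resolution. Assume the theorem in all dimensions $<m$. Following the inductive setup, the canonical resolution is assembled from two canonical reductions.

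\emph{Order reduction.} Split $\cI$ into its monomial part along $E$ and its nonmonomial part, and replace $(\cI,\mu)$ by its companion ideal---a marked ideal of maximal order with the same support as $(\cI,\mu)$, built out of the nonmonomial part and a suitable power of the monomial part via the arithmetical operations of \S2.7. Lemma~\ref{le: operations} ensures that multiple blow-ups of the companion ideal correspond to multiple blow-ups of $(\cI,\mu)$, so we may henceforth assume $(X,\cI,E,\mu)$ is itself of maximal order.

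\emph{Dimension reduction.} On a sufficiently small \'etale neighborhood $U\to X$ pick a tangent direction $u\in T(\cH(\cI))(U)$ of the homogenized ideal that is transversal to $E$; existence is guaranteed because $\cH(\cI)$ is of maximal order. Set $H:=V(u)$. Lemma~\ref{le: Gi} shows $H$ is smooth; Lemma~\ref{le: Giraud} shows that the supports of $(\cI,\mu)$ and of all its transforms under admissible multiple blow-ups lie inside the successive strict transforms of $H$; and Lemma~\ref{le: S}, applied to the equivalent coefficient ideal $\cC(\cH(\cI))_{|H}$, puts the multiple blow-ups of $(\cI,\mu)_{|U}$ with centers in $H$ in bijection with those of $\cC(\cH(\cI))_{|H}$ on the lower-dimensional variety $H$. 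The inductive hypothesis applied to that restricted marked ideal produces a canonical resolution on $H$, which lifts to a canonical resolution of $(\cI_{|U},\mu)$.

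The central obstacle is that the tangent direction $u$ is a local and noncanonical choice, so a priori different selections on overlapping \'etale charts could yield incompatible blow-up centers. This is precisely what the homogenization machinery was designed to handle. By the Glueing Lemma~\ref{le: homo}, any two tangent directions $u,v\in T(\cH(\cI))$ on a common open set fit into a compatible pair of \'etale neighborhoods $\phi_u,\phi_v:\overline{X}\to X$ that agree on the supports of all iterated transforms of $\cH(\cI)$; hence the sequences of centers induced via $u$ and via $v$ coincide. Combining this with the \'etale compatibilities $\phi^*\cD=\cD\phi^*$, $\phi^*\cH=\cH\phi^*$, $\phi^*\cC=\cC\phi^*$ of Lemma~\ref{le: etale3} patches the local canonical resolutions into a well-defined global one and yields properties (1) and (2): a surjective \'etale morphism reproduces the recipe verbatim, whereas a non-surjective \'etale morphism produces only an extension, since some canonical centers over $X$ may be disjoint from the image of $X'$. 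Koll\'ar's trick is invoked during the order-reduction step to replace numerical invariants when matching the companion ideal construction across charts.
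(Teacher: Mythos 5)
Your overall architecture (induction on $\dim X$, companion ideal, homogenized coefficient ideal, maximal contact, gluing via the homogenization machinery and Koll\'ar's disjoint-union trick) is the same as the paper's, but two essential pieces of the paper's proof are missing, and at both places your shortcut assertion is false. First, the order reduction does not allow you to ``henceforth assume $(X,\cI,E,\mu)$ is of maximal order.'' The companion ideal $O(\cI,\mu)$ does not have the same support as $(\cI,\mu)$ (by Lemma \ref{le: operations} its support is $\supp(\cN(\cI),\ord_{\cN(\cI)})\cap\supp(\cI,\mu)$), and resolving it only forces the order of the nonmonomial part to drop. One must iterate this reduction (Step 2a of the paper), and the iteration terminates not with a resolution but with a purely monomial marked ideal $\cI=\cM(\cI)$ whose support can still be nonempty. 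The paper's Step 2b handles this by a separate combinatorial argument: the support is a union of simple normal crossing components cut out by subsets of $E$, a canonical center is selected using the ordering on $\Sub(E)$ and the invariant $\rho$, and termination follows because $\ord_x(\cI)$ drops on the components that are blown up. Your proposal contains no treatment of the monomial case, so the resolution is never completed and properties (1) and (2) cannot even be formulated for the full sequence.

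Second, you skip Step 1a entirely. You claim a tangent direction transversal to $E$ exists ``because $\cH(\cI)$ is of maximal order''; maximal order only guarantees some order-one section of $\cD^{\mu-1}$ near the support, with no control on its position relative to the accumulated exceptional divisors. The paper first separates $\supp\,\cJ$ from $E$ by restricting $\cJ$ to the intersections $H^s_\alpha$ of $s$ divisors of $E$ and resolving these restrictions by the dimension induction (via Lemma \ref{le: S}), decreasing $s$ until the strict transforms of $E$ no longer meet the support; only then, in the nonboundary case $E=\emptyset$, are the maximal-contact construction and the Glueing Lemma applied --- the paper states explicitly that Lemmas \ref{le: homo0} and \ref{le: homo} are used only when $E=\emptyset$. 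Without this step you can guarantee neither the transversality hypotheses of Lemma \ref{le: homo} nor that the centers produced inside the maximal-contact hypersurfaces have simple normal crossings with $E$, which is part of the admissibility conditions in the definition of a resolution. (A smaller inaccuracy: Koll\'ar's trick, i.e.\ passing to $\coprod U_\alpha$, is used in the dimension-reduction Step 1b to glue the locally chosen maximal-contact hypersurfaces, not in the order-reduction step as you state.)
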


\begin{proof} If $\cI= 0$ and $\mu>0$ then $\supp(X, {\cI},\mu)=X$, and the blow-up of $X$ is the empty set and thus it defines a unique resolution. Assume that $\cI\neq 0$.

We will use induction  on the dimension of $X$. If $X$ is $0$-dimensional, $\cI\neq 0$ and $\mu>0$ then $\supp(X, {\cI},\mu)=\emptyset$ and all resolutions are trivial.
\medskip

{\bf Step 1}.  {\bf Resolving a marked ideal $(X,{\cI},E,\mu)$ of maximal
order.}
Before performing the  resolution algorithm  for the marked ideal $({\cI},\mu)$ of maximal order in Step 1 we will replace it with the equivalent  homogenized ideal ${\cC}({\cH}({\cI},\mu))$.
Resolving the ideal ${\cC}({\cH}({\cI},\mu))$ defines a resolution of $({\cI},\mu)$ at this step.
To simplify notation we shall denote ${\cC}({\cH}({\cI},\mu))$ by 
$\cJ = (\cJ, \mu(\cJ))$.
\medskip

 {\bf Step 1a}. {\bf Reduction to the nonboundary case. Moving $\supp\,\cJ$ and $H^s_\alpha$ apart .}
 For any multiple  blow-up $(X_i)$ of $(X,\cJ,E,\mu(\cJ))$, we will denote (for simplicity) the strict transform of $E$ on any $X_i$ also by $E$.

For any $x\in X_i$, let $s(x)$ denote the number of divisors in $E$ through $x$ and
set $$ s_i=\max\{ s(x) \mid x \in \supp\,\cJ_i)\}.$$

Let $s=s_0$. By  assumption the intersections of any $s> s_0$ components of the exceptional divisors are disjoint from
$\supp\,\cJ$.
Each intersection of divisors in $E$ is locally defined by the intersection of some irreducible components of these divisors.
Find all   intersections $H^s_\alpha$, $\alpha\in A$, of $s$ irreducible components of divisors $E$ such that $\supp\,\cJ \cap H^s_\alpha\neq\emptyset$. By the maximality of $s$, the supports $\supp\,\cJ_{|H^s_\alpha})\subset H^s_\alpha$ are disjoint from $H^s_{\alpha'}$, where $\alpha'\neq\alpha$.

Set  $$H^s:=\bigcup_\alpha H^s_\alpha,\quad U^s:=X \setminus H^{s+1}, \quad \underline{H^s}:=H^s\setminus H^{s+1}.$$
Then $\underline{H^s}\subset U_s$ is a smooth closed subset $U_s$. Moreover $\underline{H^s}\cap \supp(\cJ)={H^s}\cap \supp(\cJ)$ is closed.

Construct the canonical resolution of $\cJ_{|\underline{H^s}}$. By Lemma \ref{le: S}, it
defines a multiple  blow-up of $(\cJ,\mu(\cJ))$ such that
$$\supp\,\cJ_{j_1}\cap{H^s_{j_1}}=\emptyset.$$
In particular the number of the strict tranforms of $E$ passing through a single point of the support drops $s_{j_1}<s$. Now we put $s=s_{j_1}$ and repeat the procedure. We continue the above process until $s_{j_k}=s_r=0$.
Then $(X_j)_{0\leq j\leq r}$  is a multiple  blow-up of $(X,\cJ,E,\mu(\cJ))$
such that $\supp\,\cJ_r$ does not intersect any divisor in $E$.

Therefore $(X_j)_{0\leq j\leq r}$ and further longer multiple  blow-ups $(X_j)_{0\leq j\leq m}$ for any $m\geq r$   can be considered as multiple  blow-ups of $(X,\cJ,\emptyset,\mu(\cJ))$ since starting from $X_r$ the strict transforms of $E$ play no further role in the resolution process since they do not intersect  $\supp\,\cJ_j$ for $j\geq r$. We reduce the situation to the ``nonboundary case" $E = \emptyset$.
\medskip

{\bf Step 1b}.   {\bf Nonboundary case.}
Let $(X_j)_{0\leq j\leq r}$ be the multiple  blow-up of $(X,\cJ,\emptyset,\mu(\cJ))$ defined in Step 1a.

For any $x\in \supp\,\cJ \subset X$, find a tangent direction 
$u_\alpha \in {\cD}^{\mu(\cJ)-1}(\cJ)$  on some neighborhood $U_\alpha$ of $x$.
Then $V(u_\alpha)\subset U_\alpha$ is a hypersurface of maximal contact.
By the quasicompactness of $X$, we can assume that the covering defined by $U_\alpha$ is finite.
Let $U_{i\alpha}\subset X_i$ be the inverse image of $U_{\alpha}$ and let $H_{i\alpha}:=V(u_\alpha)_i\subset U_{i\alpha}$ denote the strict transform of $H_\alpha:=V(u_\alpha)$.

Set (see also \cite{Kollar2}) $$\widetilde{X}:= \coprod U_\alpha\quad \quad  \widetilde{H}:=\coprod H_\alpha\subseteq \widetilde{X}.$$
The closed embeddings $H_\alpha \subseteq U_\alpha$ define the closed embedding $\widetilde{H}\subset \widetilde{X}$ of a hypersurface of maximal contact $\widetilde{H}$.

Consider the surjective \'etale morphism
$$\phi_U: \widetilde{X}:= \coprod U _\alpha\to X.$$
Denote by $\widetilde{\cJ}$ the pull back of the ideal sheaf $\cJ$ via $\phi_U$.
The multiple  blow-up  $(X_{i})_{0\leq i\leq r}$ of $\cJ$ defines a multiple  blow-up $(\widetilde{X}_{0\leq i\leq r})$ of $\widetilde{\cJ}$  and a multiple  blow-up $(\widetilde{H}_i)_{0\leq i\leq r}$ of $\widetilde{\cJ}_{|H}$.

Construct the canonical resolution of $(\widetilde{H}_{i})_{r\leq i\leq m}$ of the marked ideal ${\widetilde{\cJ}}_{r|\widetilde{H}_{r}}$ on $\widetilde{H}_r$. It defines, by Lemma \ref{le: coeff}, a resolution
$(\widetilde{X}_{r\leq i\leq m})$ of ${\widetilde{\cJ}}_{r}$ and thus also a resolution  $(\widetilde{X}_i)_{0\leq i\leq m}$
of $(\widetilde{X},{\widetilde{\cJ}},\emptyset, \mu(\cJ))$. Moreover both resolutions are related by the property $$\supp(\widetilde{\cJ}_{i})=\supp(\widetilde{\cJ}_{i|\widetilde{H}_i}).$$

Consider a  (possible) lifting  of $\phi_U$: $$\phi_{iU}: \widetilde{X}_i:= \coprod U _{i\alpha}\to X_i,$$ which is a  surjective locally \'etale morphism.
The lifting is constructed for $0\leq i\leq r$.

For ${r\leq i\leq m}$ the resolution $\widetilde{X}_i$ is induced by the canonical resolution $(\widetilde{H}_i)_{r\leq i\leq m}$ of $\cJ_{r|\widetilde{H}_r}$

We show that the resolution $(\widetilde{X_i})_{r\leq i\leq m}$
descends to the resolution $({X_i})_{r\leq i\leq m}$.

Let $\widetilde{C}_{j_0}=\coprod C_{j_0\alpha}$ be the center of the blow-up $\widetilde{\sigma}_{j_0}:\widetilde{X}_{j_0+1}\to\widetilde{X}_{j_0}$. The closed subset $C_{j_0\alpha}\subset U_{j_0\alpha}$ defines the center of an extension of the canonical resolution $(H_{j\alpha})_{r\leq j\leq m}$.

 If  ${C}_{j_0\alpha}\cap   U_{j_0\beta}\neq \emptyset $, then, by the canonicity and condition (2) of the inductive assumption, the subset
${C}_{j_0\alpha\beta}:={C}_{j_0\alpha}\cap   U_{j_0\beta}$ defines  the center
of an extension of  of the canonical resolution $H_{j\alpha\beta}:=((H_{j\alpha}\cap U_{j\beta}))_{r\leq j\leq m}$.
On the other hand ${C}_{j_0\beta\alpha}:={C}_{j_0\beta}\cap   U_{j\alpha}$ defines
 the center of an extension of  the canonical resolution $((H_{j\beta\alpha}:=H_{j\beta}\cap U_{j\alpha}))_{r\leq j\leq m}$.


By the Glueing Lemma \ref{le: homo} for the tangent directions $u_\alpha$ and $u_\beta$,
there exist \'etale neighborhoods  $\phi_{u_\alpha},\phi_{u_\beta}: \overline{U}_{\alpha\beta}\to {U}_{\alpha\beta}:=U_\alpha\cap U_\beta $ of $x=\phi_u(\overline{x})=\phi_v(\overline{x}) \in X$,  where $\overline{x}\in \overline{X}$, such that
\begin{enumerate}
\item  $\phi_{u_\alpha}^*({\cJ})=\phi_{u_\beta}^*(\cJ)$;
\item  $\phi_{u_\alpha}^*(E)=\phi_{u_\beta}^*(E)$;
\item  $\phi_{u_\alpha}^{-1}(H_{j\alpha\beta})=\phi_{u_\beta}^{-1}(H_{j\beta\alpha})$;
\item $\phi_{u_\alpha}(\bar{x})=\phi_{u_\beta}(\bar{x})$ for $\overline{x}\in \supp(\phi_{u_\alpha}^*({\cJ}))$.
\end{enumerate}
Moreover, all the properties lift to the relevant \'etale morphisms    $\phi_{u_{\alpha i}},\phi_{u_{\beta i}}: \overline{U}_{\alpha\beta i}\to {U}_{\alpha\beta i}$.
Consequently, by canonicity, $\phi_{u_\alpha j_0}^{-1}({C}_{j_0\alpha\beta})$ and $\phi_{u_\beta j_0}^{-1}(C_{j_0\beta\alpha})$  both define the next center of the extension of the canonical resolution
$\phi_{u_\alpha}^{-1}(H_{j_0\alpha\beta})=\phi_{u_\beta}^{-1}(H_{j_0\beta\alpha})$ of $\phi_{u_\alpha j_0}^*({\cJ}_{|H_{\alpha\beta}})=\phi_{u_\beta}^*({\cJ}_{|H_{\beta\alpha}})$.
Thus $$\phi_{u_\alpha}^{-1}({C}_{j_0\alpha\beta})=\phi_{u_\beta}^{-1}({C}_{j_0\beta\alpha}),$$ and finally, by property (4),
 $${C}_{j_0\alpha\beta}={C}_{j_0\beta\alpha}.$$
Consequently $\widetilde{C}_{j_0}$ descends to a smooth closed center $C_{j_0}=\bigcup {C}_{j_0\alpha} \subset X_{j_0}$ and
the resolution $(\widetilde{X_i})_{r\leq i\leq m}$
descends to a resolution $({X_i})_{r\leq i\leq m}$.
\medskip

{\bf Step 2}.   {\bf Resolving marked ideals $(X,{\cI},E,{\mu})$.}
For any  marked ideal $(X,{\cI},E,\mu)$ write
 $$I=\cM({\cI}){\cN}({\cI}),$$  \noindent where $\cM({\cI})$ is the {\it monomial
part} of ${\cI}$, that is, the product of the principal ideals
defining the irreducible components of the divisors in $E$, and ${\cN}({\cI})$ is the {\it nonmonomial
part} which is not divisible by any ideal of a divisor in $E$.
Let $$\ord_{{\cN}({\cI})}:=\max\{\ord_x({\cN}({\cI}))\mid x\in\supp(\cI,\mu)\}.$$

\begin{definition} (Hironaka, Bierstone-Milman,Villamayor, Encinas-Hauser)
By the {\it companion ideal} of $({\cI},\mu)$  where $I={\cN}({\cI})\cM({\cI})$
we mean the marked ideal of maximal order
\begin{displaymath}
O({\cI},\mu)= \left\{ \begin{array}{ll}
({\cN}({\cI}),\ord_{{\cN}({\cI})})\, + \,(\cM({\cI}),\mu-\ord_{{\cN}({\cI})})& \textrm{if \,  $\ord_{{\cN}({\cI})}<\mu$},\\ ({\cN}({\cI}),\ord_{{\cN}({\cI})})&\textrm{if \,  $\ord_{{\cN}({\cI})}\geq \mu$}.
\end{array} \right.
\end{displaymath}
\end{definition}

In particular $O({\cI},\mu)=({\cI},\mu)$ for ideals $({\cI},\mu)$ of maximal order.
\medskip

{\bf Step 2a}. {\bf Reduction to the monomial case by using companion ideals.}
By Step 1 we can resolve the marked ideal of maximal order 
$O({\cI})=O({\cI},\mu)=(O({\cI}),\mu(O({\cI})))$.  By Lemma \ref{le: operations}, for any multiple  blow-up of  $O({\cI},\mu)$,
\begin{align*}
\supp(O({\cI},\mu))_i&= \supp[{\cN}({\cI}),\ord_{{\cN}({\cI})}]_i\,\, \cap \,\, \supp[M({\cI}),\mu-\ord_{{\cN}({\cI})}]_i \\
&= \supp[{\cN}({\cI}),\ord_{{\cN}({\cI})} ]_i\,\,\cap \,\, \supp({\cI}_i,\mu).
\end{align*}
Consequently, such a resolution leads to an ideal $({\cI}_{r_1},\mu)$ such that $\ord_{{\cN}({\cI}_{r_1})}<\ord_{{\cN}({\cI})}$. Then we repeat the procedure for $({\cI}_{r_1},\mu)$.
 We find marked ideals $({\cI}_{r_0},\mu)=(\cI, \mu), ({\cI}_{r_1},\mu), \ldots, ({\cI}_{r_m},\mu)$ such that
$\ord_{{\cN}({\cI}_0)}>\ord_{{\cN}({\cI}_{r_1})}>\ldots>\ord_{{\cN}({\cI}_{r_m})}$.
  The procedure terminates after a finite number of steps when we arrive at an ideal $({\cI}_{r_m},\mu)$ with $\ord_{{\cN}({\cI}_{r_m})}=0$ or with $\supp({\cI}_{r_m},\mu)=\emptyset$. In the second case we get a resolution. In the first case $\cI_{r_m}=\cM({\cI}_{r_m})$ is monomial.
\medskip

{\bf Step 2b}. {\bf Monomial case $\cI=\cM({\cI})$}.
The collection of divisors $E$ is ordered (see Definitions 2.1.1 and 2.1.3); say
$E = \{D_1, D_2, \ldots\}$. Let $\Sub(E)$ denote the set of all subsets of $E$. The ordering of $E$ induces a natural lexicographic order on
$\Sub(E)$: We can associate to each $S \in \Sub(E)$ the lexicographic order of the binary
sequence $(\delta_1, \delta_2,\ldots)$, where $\delta_i = 0$ or $1$ according as $D_i \notin S$
or $D_i \in S$.  (The actual formula for the order is irrelevant as long as it is canonical and linear for the divisors passing through a point.)

Let $x_1,\ldots,x_k$ define equations of the components ${D}^x_1,\ldots,{D}^x_k\in E$ through $x\in \supp(X,{\cI},E,\mu)$ and let
 ${\cI}$  be generated by a monomial $x^{(a_1,\ldots,a_k)}$ at $x$. In particular $$\ord_x(\cI)(x):=a_1+\ldots+a_k.$$

 Let
 $\rho(x):=\{D_{i_1},\ldots,D_{i_l}\}\in \Sub(E)$  be the maximal (with respect to the order on $\Sub(E)$) subset satisfying the properties
\begin{enumerate}
\item  $a_{i_1}+\ldots+a_{i_l}\geq \mu.$
\item For any $j=1,\ldots,l$, $a_{i_1}+\ldots +\check{a}_{i_j}+\ldots +a_{i_l} < \mu. $
\end{enumerate}
Let $R(x)$ denote the subsets in $\Sub(E)$ satisfying the properties (1) and (2).
The maximal irreducible components of the $\supp({\cI},\mu)$ through $x$ are described by the intersections
$\bigcap_{D\in A} D$ where $A\in R(x)$. In particular $\supp({\cI},\mu)$ is a union of components with simple normal crossings.

The maximal locus of $\rho$ determines at most one maximal component of $\supp({\cI},\mu)$ through each $x$. The invariant $\rho$ is introduced to describe the center of the blow-up in a unique way. As we see below to resolve monomial case we can randomly pick any maximal irreducible component of  $\supp({\cI},\mu)$. The algorithm is controlled by the order and there is no need to introduce additional invariants unless we would like to construct invariants which describe the center and decrease after blow-up. (see \cite{BM2}, \cite{EV})

  After the blow-up at the maximal locus $C=\{x_{i_1}=\ldots=x_{i_l}=0\}$ of $\rho$, the ideal $\cI=(x^{(a_1,\ldots,a_k)})$ is equal to $\cI'=({x'}^{(a_1,\ldots,a_{i_j-1},a,a_{i_j+1},\ldots,a_k)})$ in the neighborhood  corresponding to $x_{i_j}$, where $a=a_{i_1}+\ldots+a_{i_l}-\mu<a_{i_j}$. In particular the invariant $\ord_x(\cI)$ drops for all  points of some maximal components of $\supp({\cI},\mu)$. Thus the maximal value of $\ord_x(\cI)$ on the maximal components of  $\supp({\cI},\mu)$ which were blown up is bigger than the maximal value of $\ord_x(\cI)$  on the new maximal components of $\supp({\cI},\mu)$.
The algorithm
terminates after a finite number of steps.
\end{proof}


\subsection{Summary of the resolution algorithm}

The resolution algorithm can be represented by the following scheme.

\noindent {\bf Step 2}.  Resolve $(\cI,\mu)$.
\begin{itemize}
\item[]
{\bf Step 2a.} Reduce $(\cI,\mu)$ to the monomial marked
 ideal $\cI=\cM(\cI)$.
$$\Downarrow$$
\item[]
If $\cI\neq\cM(\cI)$, decrease the maximal order of the nonmonomial part $\cN(\cI)$ by resolving the companion
ideal $O(\cI) = O(\cI,\mu)$.
\begin{itemize}
\item[]
{\bf Step 1}. Resolve the companion ideal $O(\cI)$:\\ 
Replace $O(\cI)$ with $\cJ:=\cC(\cH(O(\cI)))\simeq O(\cI)$. (*)
\medskip

\begin{itemize}
\item[]
{\bf Step 1a}. Move apart all strict transforms of $E$ and $\supp\,\cJ$.
$$\Downarrow$$
\item[]
Move away all intersections $H^s_\alpha$
of $s$ divisors in $E$
(where $s$ is the maximal number of divisors in $E$ through
points in $\supp\,\cJ = \supp\,O(\cI)$).
$$\Updownarrow$$
\item[]
For any $\alpha$,  resolve $\cJ_{ | (\bigcup_\alpha H^s_\alpha)}$.
\medskip

\item[]
{\bf Step 1b}. If  the strict transforms of $E$ do not intersect  $\supp\,\cJ$, resolve $\cJ$.
$$\Updownarrow$$
\item[]
Simultaneously resolve
 all $\cJ_{|V(u)}$ , where $V(u)$ is a hypersurface of
 maximal contact. (Use the property of homogenization (\cite{Wlod}), 
and Koll\'ar's trick (\cite{Kollar2}).
\end{itemize}
\end{itemize}
\medskip
\item[]
{\bf Step 2b}. Resolve the monomial marked ideal $\cI=\cM(\cI)$.
\end{itemize}
\medskip

\begin{remarks}
(1)\, (*) The ideal $O(\cI)$ is replaced with $\cH(O(\cI))$ to ensure that 
the algorithm constructed in Step 1b is independent of the 
choice of the tangent direction $u$.

We replace $\cH(O(\cI))$  with $\cC(\cH(O(\cI)))$ to ensure the equalities 
$\supp(\cJ_{|S})=\supp(\cJ)\cap S$, where $S=H^s_{\alpha}$ in Step 1a and  
$S=V(u)$ in Step 1b.

(2)\, If $\mu=1$ the companion ideal is equal to $O(\cI,1)=(\cN(\cI),\mu_{\cN(\cI)})$ so the general strategy of the resolution of $(\cI,\mu)$ is to decrease the order of the nonmonomial part and then to resolve the monomial part.

(3)\, In particular, in order to desingularize $Y$, we put  $\mu=1$ and 
$\cI=\cI_Y$, where $\cI_Y$ is the sheaf of the subvariety $Y$,
 and we resolve the marked
ideal $(X,\cI,\emptyset,\mu)$. The nonmonomial part $\cN(\cI_i)$ is nothing but the \underline{weak transform} $(\sigma^i)^{\rm w}(\cI)$ of $\cI$.
\end{remarks}

In the next sections, we provide
a complexity bound for the algorithm.

\section{Complexity bounds on a blow-up}

Our purpose for the rest of the paper is to estimate the complexity of the  desingularization algorithm
described in the previous sections.

\subsection{Preliminary setup}

\subsubsection{Affine marked ideals}  \label{4.1.1}
An input of the algorithm is an {\it affine marked ideal}; that is, a collection of tuples
$$\cT:=(\{X_{\alpha,\beta},\cI_{\alpha,\beta},E_{\alpha,\beta},U_{\alpha,\beta},(\CC^{n_\alpha})_\alpha\mid \alpha \in A, \beta \in B_\alpha\},\mu),$$
  where:
\begin{enumerate}
\item $(\CC^{n_\alpha})_{\alpha}\simeq \CC^{n_\alpha}$.
\item   $\{U_{\alpha,\beta}\mid \beta\in B_\alpha\}$ is an open cover of $(\CC^{n_\alpha})_{\alpha}$.

\item $U_{\alpha,\beta} \subset (\CC^{n_\alpha})_\alpha$ is an open subset whose complement is given by $f_{\alpha,\beta} =0$.

\item $X_{\alpha,\beta}\subset (\cCC^{n_\alpha})_\alpha$ is a closed subset such that $X_{\alpha,\beta}\cap U_{\alpha,\beta}$ is
 a nonsingular $m$-dimensional variety (possibly reducible). Moreover there exists  a set of parameters (coordinates) on $U_{\alpha,\beta}$,
$$u_{{\alpha,\beta},1},\ldots,u_{{\alpha,\beta},{n_\alpha}} \in  \CC[x_{\alpha,1},
\dots,x_{\alpha,{n_\alpha}}], $$
such that  $u_{{\alpha,\beta,i}}$ is a coordinate $x_{\alpha,j}$ describing an exceptional divisor or it is transversal
to the exceptional divisors (over $U_{\alpha,\beta}$) , and, moreover, $\cI_{X_{\alpha,\beta}}=(u_{{\alpha,\beta},i_1},\ldots,u_{{\alpha,\beta},i_k} ) \subset  \cCC[x_{\alpha,1}, . . . , x_{\alpha,{n_\alpha}}]$, for a certain subset $\{i_1,\ldots,i_k\}\subset \{1,\ldots,{n_\alpha}\}$.

\item $\cI_{\alpha,\beta}=\langle g_{{\alpha,\beta},1},\dots,g_{{\alpha,\beta},\overline j}\rangle\subset \cCC[x_{\alpha,1},
\dots,x_{\alpha,{n_\alpha}}]$ is an ideal,

\item $E_{\alpha,\beta}$ is a collection of $s\leq m$ smooth divisors in $(\cCC^{n_\alpha})_\alpha$ described by some $x_{\alpha,{j}}=0$, where $j=n_\alpha-s+1,\ldots,n_\alpha$. The divisors in  $E_{\alpha,\beta}$ are either transversal to $X_{\alpha,\beta}$ or they contain $X_{\alpha,\beta}$.
The restrictions of the divisors in $E_{\alpha,\beta}$  that are transversal to $X_{\alpha,\beta}$ define the exceptional divisors on $X_{\alpha,\beta}$ .

\item There exist birational maps $i_{\alpha_1\beta_1,\alpha_2,\beta_2}: X_{\alpha_1,\beta_1}\dashrightarrow X_{\alpha_2,\beta_2}$ given by
$$X_{\alpha_1,\beta_1} \ni  x\mapsto i_{\alpha_1\beta_1,\alpha_2,\beta_2}(x)=(\frac{v_{\alpha_1\beta_1,\alpha_2,\beta_2,1}}{w_{\alpha_1\beta_1,\alpha_2,\beta_2,1}},\ldots, \frac{v_{\alpha_1\beta_1,\alpha_2,\beta_2,n_{\alpha_2}}}{w_{\alpha_1\beta_1,\alpha_2,\beta_2,n_{\alpha_2}}})(x) \in X_{\alpha_2,\beta_2}$$
for regular functions $v_{\alpha_1\beta_1,\alpha_2,\beta_2,1},\ldots, v_{\alpha_1\beta_1,\alpha_2,\beta_2,n_{\alpha_2}},w_{\alpha_1\beta_1,\alpha_2,\beta_2,1},\ldots, w_{\alpha_1\beta_1,\alpha_2,\beta_2,n_{\alpha_2}}$ in \\ $\CC[x_{\alpha_1,1},\ldots,x_{\alpha_1, n_{\alpha_1}}]$.

\item The birational maps $i_{\alpha\beta,\alpha',\beta'}$ determine uniquely up to an isomorphism a variety $X_\cT$ in the following sense: There exist open embeddings
$j_{\alpha,\beta}:X_{\alpha,\beta}\cap U_{\alpha,\beta}\hookrightarrow X_\cT$ defining an open cover of
   $X_\cT$, and satisfying
   $$j^{-1}_{\alpha_2,\beta_2}j_{\alpha_1,\beta_1}=i_{\alpha_1\beta_1,\alpha_2,\beta_2}\,.$$
\item $\mu\ge 0$
is an integer.

\item $\supp(\cI_{\alpha,\beta},\mu)\cap U_{\alpha,\beta}\cap U_{{\alpha,\beta}'}=\supp(\cI_{{\alpha,\beta}'},\mu)\cap U_{\alpha,\beta}\cap U_{{\alpha,\beta}'}$.
\end{enumerate}
\medskip

\begin{remarks} 
(1)\, The objects $X_{\alpha,\beta}, E_{\alpha,\beta}, \cI_{\alpha,\beta}$, as well as corresponding functions $g_{\alpha,\beta,i},u_{\alpha,\beta,j}, x_{\alpha,i}$  are relevant for the algorithm after they are restricted  to $U_{\alpha,\beta}$. Their behavior in the complement $(\CC^{n_\alpha})_\alpha \setminus U_{\alpha,\beta}$ has no relevance.

(2)\, The operation of restricting to maximal contact  leads to considering open subsets $U_{\alpha,\beta} \subset (\CC^{n_\alpha})_\alpha$.

(3)\, While studying the complexity of the algorithm we will  assume that the coefficients of  the input polynomials belong just  to  $\ZZ$. All general considerations are given  for coefficients from  $\CC$, and remain valid for any algebraically closed field of zero characteristic.

(4)\, The open embeddings $j_{\alpha,\beta}$ can be constructed from $i_{\alpha_1\beta_1,\alpha_2,\beta_2}$ after performing the algorithm but we do not dwell 
on this.
\end{remarks}
\medskip

\begin{definition} By the support of $\cT$ we mean the collection of the sets
$$\supp(\cT):=\{\supp(X_{\alpha,\beta}\cap U_{\alpha,\beta},\cI_{\alpha,\beta},E_{\alpha,\beta}\cap U_{\alpha,\beta}\cap X_{\alpha,\beta},\mu)\}_{{\alpha\in A,\beta\in B}}$$
\end{definition}
\medskip

\begin{definition} Given an affine marked ideal
$\cT:=(\{X_{\alpha,\beta},\cI_{\alpha,\beta},E_{\alpha,\beta},U_{\alpha,\beta},(\CC^{n_\alpha})_\alpha\mid \alpha \in A, \beta \in B_\alpha\},\mu),$ we say that
an affine marked ideal $\cT':=(\{X_{\alpha',\beta'},\cI_{\alpha,\beta},E_{\alpha,\beta},U_{\alpha,\beta},(\CC^{n_\alpha})_\alpha\mid \alpha \in A', \beta \in B'_\alpha\},\mu),$ {\it  is defined over $\cT$, }provided:
\begin{enumerate}
\item There exist maps of index-sets  $p:A'\to A$ , and $p_{\alpha'}: B'_{\alpha'} \to B_{p(\alpha')}$.
\item The canonical projection on the first $\alpha=p(\alpha')$ components $$\pi_\alpha': (\CC^{n_{\alpha'}})_{\alpha'}\to  (\CC^{n_\alpha})_\alpha$$ \noindent determine
birational morphisms
$\pi_{\alpha',\beta'}:=\pi'_{|X_{\alpha',\beta'}}: X_{\alpha',\beta'} \to X_{\alpha,\beta}$
commuting with $i_{\alpha_1\beta_1,\alpha_2,\beta_2}$ and $i_{\alpha'_1\beta'_1,\alpha'_2,\beta'_2}$.
\item  There exist natural birational morphisms $X_{\cT'}\to X_{\cT}$ commuting with $j_{\alpha,\beta}$, and $j_{\alpha',\beta'}$.
\end{enumerate}
\end{definition}
\medskip

We introduce the following functions to characterize the affine marked ideal $$\cT:=(\{X_{\alpha,\beta},\cI_{\alpha,\beta},E_{\alpha,\beta},U_{\alpha,\beta}\}_{\alpha,\beta},(\CC^{n_\alpha})_\alpha ,\mu):$$

\begin{enumerate}
\item $m(\cT)=\dim(X_\cT),$
\item $\mu(\cT)=\mu,$
\item $d(\cT)$ is the maximal degree of all polynomials in
$$
\Psi(\cT):=\{u_{{\alpha,\beta},i}, g_{{\alpha,\beta},i}, f_{{\alpha,\beta}},  v_{\alpha,\beta,\alpha_2,\beta_2,i}, w_{\alpha,\beta,\alpha_2,\beta_2,j}.\}
$$
\item  $n(\cT)=\max{n_\alpha}$,
\item  $l(\cT)$ is the maximal number of all polynomials in $\Psi(\cT)$.

\item $q(\cT)$ is the number of neighborhoods $U_{\alpha,\beta}$ in $\cT$ ( i. e the number of  the indices such that  $\alpha\in A, \beta \in B$).
\item  $b(\cT)$  the maximum bit size of any (integer) coefficient of each of the polynomials  in $\Psi(\cT)$
 \end{enumerate}
\medskip

\begin{remark} The function $b(\cT)$ is used only for the estimation of the  total complexity of the algorithm.
In particular it has no relevance for the estimates of the number of
blow-ups,  the maximal embedding dimension $n(\cT)$, or the number
of neighborhoods.
\end{remark}

Algorithmically the input is represented by the coefficients of polynomials describing an affine marked ideal $\cT_0$.
We assume $$m(\cT_0)=m,\quad  d(\cT_0)\leq d_0, \quad  n(\cT_0)\leq n_0, \quad  l(\cT_0)\leq l_0,  \quad  b(\cT_0)\leq b_0.$$
Then in particular, the
total bit-size of the input does not exceed $b_0\cdot l_0 \cdot d_0^{O(n_0)}$, cf. \cite{G2}.
\medskip

\subsubsection{Resolution of singularities}
For simplicity consider an irreducible affine variety $Y\subset \CC^n$ described by some equations. The algorithm resolves $Y$ by the following procedure:
\medskip

{\bf Step A}. Find the generators of $\cI_Y=\langle g_1,\dots,g_{\overline j}\rangle\subset \cCC[x_1,
\dots,x_n]$ and construct the affine marked ideal $$\cT:=(X=\CC^n, \cI_Y, E=\emptyset, U=\CC^n,\CC^n,\mu=1)$$

{\bf Step B}. Start the resolving procedure for  the affine marked ideal $(\{X=\CC^n, \cI_Y, E=\emptyset, U=\CC^n\},\CC^n, \mu=1)$ (see below).
\medskip

{\bf Step C}. Pick a nonsingular point $p \in Y \subset \CC^n$. Stop the resolution procedure when the constructed center of the following blow-up in the algorithm passes through the inverse image of $p$. As an output of the resolution algorithm we  get  an affine marked ideal
$$\cT':=(\{ X_{\alpha',\beta'},\cI_{\alpha',\beta'},E_{\alpha',\beta'},U_{\alpha,\beta}, \CC^{n}_{\alpha'}\}_{\alpha',\beta'},1)$$
\noindent over $\cT$.  In particular we have a collection of projections $$\pi_{\alpha'}: (\CC^{{n}_{\alpha'}})_{\alpha'}\to \CC^{n}$$ \noindent (projection on the first $n$  coordinates.).
(Note that the restriction of $\pi_\alpha$ defines a  birational morphism $\pi_{\alpha',\beta'}: X_{\alpha',\beta'} \to X$ which is an isomorphism in a neighborhood of $p\in X$.)

The center of the following blow-up is described on  some open subcover $\{ U_{{\alpha',\beta''}}\}_{\alpha'\in A', \beta''\in B''_{\alpha'}} $  of \\
$\{U_{\alpha',\beta'}\}_{\alpha'\in A', \beta'\in B'_{\alpha'}}$
by  $C_{{\alpha',\beta''}}\cap U_{{\alpha',\beta''}}$,
for closed subsets
$C_{{\alpha',\beta''}}\subset (\CC^{n_{\alpha'}})_{\alpha'}$.
Consider the unique irreducible  component $\tilde{C}_{{\alpha',\beta''}}$ of $C_{{\alpha',\beta''}}$ containing the inverse image of  the point $p$.
Then $$\overline{\pi}_{\alpha',\beta''}:=\pi_{| \tilde{C}_{\alpha',\beta''}\cap  U_{{\alpha',\beta''}}}: \tilde{Y}_{\alpha',\beta''}:=\tilde{C}_{{\alpha',\beta''}}\cap  U_{{\alpha',\beta''}} \to Y$$ is a local resolution of $Y$.
The resolution space $\tilde{Y}$ is described by an open cover $ \{\tilde{Y}_{{\alpha',\beta''}}\,,\overline{\pi}_{\alpha',\beta''}\}_{{\alpha',\beta''}}$. The sets $\tilde{Y}_{{\alpha',\beta''}}$ are represented as closed subsets of the open subsets $U_{{\alpha',\beta''}}\subset \CC^{n_{\alpha'}}$.
\medskip

\subsubsection{Principalization}
Given  a smooth affine variety $X\subset \CC^n$, described by equations $u_{X,i}=0$, and an ideal $\cI=(g_1,\ldots,g_k)$ on $X$ and on $\CC^n$.
\medskip

{\bf Step A}. First the algorithm finds affine neighbourhoods $U_{\alpha,\beta}$ in $\cCC^n$ each given by an
inequality ${f_{\alpha,\beta}}\neq 0$ in which $X$ is represented by a family
of local parameters $$u_{1}=\cdots=u_{{n-m}}=0.$$
Moreover, it finds coordinates $x_i$ on $\CC^n$, such that (up to index permutation)
$$ u_{1},\cdots,u_{{n-m}},x_{n-m+1},\ldots, x_n $$
is a complete set of parameters.

The  local parameters $u_{i}$ are chosen among the input
polynomials. To this end the algorithm can for each choice of $\{i_1,\dots,i_{n-m}\}
\subset \{1,\dots,\overline i \}$ pick an identically non-vanishing minor
$f_{\alpha,\beta}$ of the Jacobian matrix of $\{u_i\}_i$.

We construct an affine marked ideal given by an
input tuple by $$\cT:=(\{X_{\beta}:=X, \cI_{\beta}:=\cI, E_{\beta}=\emptyset, U_{\beta},\CC^{n}\}_{\beta},\mu=1).$$
\smallskip

{\bf Step B}. The algorithm  resolves $\cT=(\{X_{\beta}:=X, \cI_{\beta}:=\cI, E_{\beta}=\emptyset, U_{\beta}, \CC^{n}\}_{\beta},\mu=1)$.
As an output, we get $$\cT':=(\{ X_{{\alpha',\beta'}},\cI_{{\alpha',\beta'}},E_{\alpha',\beta'},U_{\alpha',\beta'},(\CC^{n_{\alpha'}})_{\alpha'} \}_{{\alpha',\beta'},},1),$$
\noindent over $\cT$.
\medskip

{\bf Step C}. The variety $X':=X_{\cT'}$ is described by an open cover  $\{X_{\alpha',\beta'}\cap U_{{\alpha',\beta'}}\}_{\alpha',\beta'}$ for closed subsets  $X_{\alpha',\beta'}\subset  \CC^{n_{\alpha'}}$,
and  open subsets $U_{{\alpha',\beta'}} \subset \CC^{n_{\alpha'}}$ (see (8) from \ref{4.1.1}). Moreover,
we have a collection of birational morphisms  $\pi_{{\alpha',\beta}'}: X_{{\alpha',\beta}'}\cap U_{{\alpha',\beta'}} \to X\subset
\CC^n$.
The principal ideal on $X_{{\alpha',\beta'}}$ is generated by $$(g_1\circ \pi_{{\alpha',\beta}'}\,\ldots,g_k\circ \pi_{{\alpha',\beta}'}) = (x_{1}^{a_1}\ldots x_{n_{\alpha'}}^{a_{n_{\alpha'}}})$$
\smallskip

\subsection{\bf Description of blow-up}
Consider an affine marked ideal $$\cT:=(\{X_{\alpha,\beta},\cI_{\alpha,\beta},E_{\alpha,\beta},U_{\alpha,\beta},(\CC^{n_\alpha})_\alpha \mid \alpha \in A, \beta \in B_\alpha \},\mu)$$ corresponding to a marked ideal $(X,\cI,E,\mu)$.
Let  $C\subset X$ be a smooth center described  as follows:

 We assume that there is an open subcover cover $\{U_{{\alpha,\beta'}}\}_{\alpha\in A, \beta'\in B'_\alpha} \subset (\CC^{n_\alpha})_\alpha\simeq \CC^n$ of $U_{\alpha,\beta}$, together with a map of indices $\rho: B'_\alpha\to B_\alpha$, and a
collection of closed subvarieties $C_{{\alpha,\beta'}}\subset (\CC^{n_\alpha})_\alpha$  (of dimension $k_{\alpha\beta'}\leq m$), such that

\begin{enumerate}
\item $\bigcup_{\rho(\beta')=\beta} U_{{\alpha,\beta'}}= U_{{\alpha,\beta}}$;
\item $C_{{\alpha,\beta'}}\cap U_{{\alpha,\beta'}}\subset  \supp(\cI_{\alpha,\beta',\mu)}\cap U_{\alpha,\beta'}$;
\item $C_{{\alpha,\beta'}}$ is described on  each $U_{{\alpha,\beta'}}$ by a set of local parameters $$u_{{\alpha,\beta'},1},\ldots,u_{{\alpha,\beta'},{n_\alpha}-m},u_{{\alpha\beta'},{n_\alpha}-m+1},\ldots,u_{{\alpha,\beta'},{n_\alpha}-k_{\alpha\beta'}} \in \CC[x_1,\ldots,x_{n_\alpha}],$$
i.e.,
$$u_{{\alpha,\beta'},1}=\ldots=u_{{\alpha,\beta'},{n_\alpha}-m}=u_{{\alpha\beta'},{n_\alpha}-m+1}=\ldots=u_{{\alpha,\beta'},{n_\alpha}-k_{\alpha\beta'}}=0, $$
where $X_{\alpha,\beta}$ is described on $U_{\alpha,\beta}\supset U_{\alpha,\beta'}$ by  $u_{{\alpha,\beta'},1}=\ldots=u_{{\alpha,\beta'},{n_\alpha}-m}=0$;
\item $u_{{\alpha,\beta'},1},\ldots,u_{{\alpha,\beta'},{n_\alpha}-k_{\alpha\beta'}}$ are transversal to the exceptional divisors   (over $U_{\alpha,\beta'}$),
or coincide with coordinate functions describing the exceptional divisors.
\end{enumerate}

Denote by $$\cT':=(\{X_{\alpha',\beta'},\cI_{\alpha',\beta'},E_{\alpha',\beta'},U_{\alpha',\beta'},(\CC^{n_{\alpha'}})_{\alpha'} \mid \alpha' \in A', \beta' \in B'_{\alpha'} \},\mu)$$ \noindent  the resulting affine marked ideal
obtained from $\cT$ by the blow-up with the center $C$. Below we describe more precisely the ingredients of $\cT'$.
\medskip

{\it The open cover after blow-up.}
The blow-up creates a new collection of ambient affine spaces $(\CC^{n_{\alpha'}})_{\alpha'}$.
Namely, we can associate with  functions $u_{{\alpha,\beta'},i}$ on  $(\CC^{n_\alpha})_\alpha$,  where $i=1,\ldots, {n_\alpha}-k_{\alpha\beta'}$, the   ${n_\alpha}-k_{\alpha\beta'}$ affine charts
$$(\CC^{n_{\alpha'}})_{\alpha'}, \quad  \mbox{where}\quad  \alpha':=(\alpha,i),\quad i=1,\ldots, {n_\alpha}-k_{\alpha\beta'},\quad {n_{\alpha'}}:=2{n_\alpha}-k_{\alpha\beta'}$$ We  also create a new collection of open subsets $U_{\alpha',\beta'}\subset (\CC^{n_{\alpha'}})_{\alpha'}$ by taking the inverse images of $U_{\alpha,\beta'}\subset (\CC^{n_\alpha})_\alpha$ under the morphisms $(\CC^{n_{\alpha'}})_{\alpha'}\to (\CC^{n_\alpha})_\alpha$.
\medskip

{\it The birational maps.}
The natural projection $\pi_\alpha: (\CC^{n_{\alpha'}})_{\alpha'}\to (\CC^{n_\alpha})_\alpha$
on the first $n_\alpha$ components defines the birational morphism $\pi_{\alpha',\beta'}=\pi_{\alpha|X_{\alpha',\beta'}} : X_{\alpha',\beta'} \to X_{\alpha,\beta'}$, for any $\alpha,\beta'$, such that  $X_{\alpha,\beta'}\neq \emptyset$. This defines birational morphisms
$$i_{\alpha'_1\beta'_1,\alpha'_2,\beta'_2}:  X_{\alpha'_1,\beta'_1} \buildrel \pi_{\alpha'_1,\beta'_1} \over \to X_{\alpha_1,\beta'_1} \buildrel  i_{\alpha_1\beta'_1,\alpha'_2,\beta_2}  \over \dashrightarrow X_{\alpha_2,\beta'_2}\buildrel \pi^{-1}_{\alpha'_2,\beta'_2} \over   \leftarrow X_{\alpha'_2,\beta'_2}.$$

 Consider a blow-up $X_{\cT'}$ of $C\subset X_\cT$.  There exist open embeddings
$j_{\alpha',\beta'}:X_{\alpha',\beta'}\cap U_{\alpha',\beta'}\hookrightarrow X_{\cT'}$
induced by $j_{\alpha,\beta}:X_{\alpha,\beta'}\cap U_{\alpha,\beta'}\hookrightarrow X_{\cT}$,
defining an open cover of
   $X_{\cT'}$, and satisfying
   $$j^{-1}_{\alpha_2',\beta_2'} \circ j_{\alpha_1',\beta_1'}=i_{\alpha'_1\beta'_1,\alpha'_2,\beta'_2}.$$
\medskip

{\it Equations of blow-up.}
Without loss of generality  the blow-up in each of the ${n_\alpha}-k_{\alpha\beta'}$ affine charts $(\CC^{n_{\alpha'}})_{\alpha'}$,
 where $\alpha':=(\alpha,i),\quad i=1,\ldots, {n_\alpha}-k_{\alpha\beta'}$,  can be described as follows:
(For simplicity, we drop the  $\alpha,\beta$ indices below.)

Assume that the function $u_{i_0}$, $i_0\leq n-k$, defines the chart of the blow-up.
The blow-up of  $\CC^n$ is a closed subset $bl(\CC^n)$ of $\CC^{2n-k}$ described by the following equations
\begin{align*}
u_j-u_{i_0}x_{j+n}&=0\quad \mbox{ for} \quad 0< j \leq n-k, j\neq i_0,\\
u_{i_0}-x_{i_0+n}&=0.
\end{align*}
\medskip

{\it The exceptional divisors.}
The exceptional divisor for this blow-up is given by $u_{i_0}=0$ on $bl(\CC^n)$. Since $u_{i_0}=x_{{i_0+n}}$ we may represent it  by the coordinate
$x_{{i_0+n}}$ on $\CC^{2n-k}$.

The previous exceptional divisors keep their form $x_j=0$ if they do not describe $C$, or they convert to $x_{j+n} (=u_{j}/u_{i_0})$ if they were described by  the function $u_j\equiv x_j$.
\medskip

{\it The strict transform of $X$.}
Recall that $X$ is described by  $u_1=\ldots=u_{n-m}=0$ on $U\subset \CC^n$.
  The blow-up of $X=X_{\alpha,\beta}$ is a closed subset $X'\subset \CC^{2n-k}$ which is described by a new set of equations:

\begin{enumerate}
\item $u_j-u_{i_0}x_{j+n}=0 \quad \mbox{for} \quad 0< j \leq n-k,\quad   j\neq i_0$;
 \item $u_{i_0}-x_{i_0+n}=0$;
\item $ x_{j+n}=0 \quad
 \mbox{for}\quad
0< j \leq n-m, \quad  j\neq i_0$.
\end{enumerate}
In some situations we consider additionally the induced equation
\begin{enumerate}
\item[(4)] $ 1=0 \quad \mbox{if}\quad    0< i_0 \leq n-m $.
\end{enumerate}
(Note that the equations of the first two types describe the blow-up $bl(\CC^n)$ of $\CC^n$. The third and the fourth types of the equations $ x_{j+n}=u_j/u_{i_0}=0$, $j\neq i_0$
(or $1=u_{i_0}/u_{i_0}=0$ )  describe the strict transform of $X$ inside $bl(\CC^n)$. In the latter case {if}   $0< j=i_0 \leq n-m$ the strict transform is an empty set in the relevant chart. Still we shall keep the uniform description of the objects and their transformations, and do not eliminate any equations in the description of the empty set.)

\subsubsection{The generators of $\cI_{\alpha,\beta}$ after blow-up}

We will not compute the controlled transforms of the generators of  $\cI$ ($=\cI_{\alpha,\beta})$   (over $U$) directly. Instead we modify them first.
The generators $g_i$ of $\cI$ satisfy, by Lemma \ref{bb}, the condition $$g_i\cdot f^{r_i}\in \cI_C^{\mu}+\cI_X,$$
where $V(f)=\CC^n\setminus U\subset \CC^n$ (we have dropped the indices $\alpha,\beta$ here).

For any generator $g_i$ write
$$g_i\cdot f^{r_i}=\sum_{ a_{n-m+1}+\ldots+a_{n-k}=\mu} h_{(a_{n-m+1},\ldots, a_{n-k}),i} u_{n-m+1}^{a_{n-m+1}}\cdot\ldots\cdot u_{n-k}^{a_{n-k}}+ \sum_{j:=1\ldots, m-n} h_{ij} u_j\,.$$
Set $\bar{a}:=(a_{n-m+1},\ldots, a_{n-k}), {\bar u}^{\bar{a}}:=u_{n-m+1}^{a_{n-m+1}}\cdot\ldots\cdot u_{n-k}^{a_{n-k}}$. Then we can rewrite the above as

$$g_i\cdot f^{r_i}=\sum_{ |\bar{a}|=\mu} h_{\bar{a}}\bar{u}^{\bar{a}}+ \sum_{j:=1\ldots, m-n} h_{ij} u_j\,.$$

\noindent To bound $r_i$ and $deg(h_{\bar{a},i}),\,
deg(h_{ij})$ we first consider a similar equality,
$$g_i\cdot f^{R_i}=\sum_{ |\bar{a}|=\mu} H_{\bar{a}}\bar{u}^{\bar{a}}+ \sum_{j:=1\ldots, m-n} H_{ij} u_j\,,$$
for certain $R_i,\, H_{\bar{a},i},\, H_{ij}$. We introduce a new variable $z$ and we get
an equality
$$g_i=z^{R_i}\cdot (\sum_{ |\bar{a}|=\mu} H_{\bar{a}}\bar{u}^{\bar{a}}+ \sum_{j:=1\ldots, m-n} H_{ij} u_j)\, +\, g_i\cdot
(\sum_{0\le j\le r-1} (z\cdot f)^j)\cdot (1-z\cdot f)\,;$$
in other words, $g_i$ belongs to the ideal
generated by $\{\bar{u}^{\bar{a}}\},\,\{u_j\},\,1-z\cdot f$. Therefore
one can represent $g_i$ as
$$g_i= \sum_{{ |\bar{a}|=\mu}}\widetilde{H}_{\bar{a},i}\cdot \bar{u}^{\bar{a}}+ \sum_{j:=1\ldots, m-n} \widetilde{H}_{ij}\cdot u_j\,
+\, \widetilde{H}\cdot (1-z\cdot f)\,,$$
for suitable polynomials $\widetilde{H}_{\bar{a},i},
\,\widetilde{H}_{ij},\,\widetilde{H}$ with degrees less than $(d\cdot \mu)^{2^{O(n)}}$ due to \cite{Seidenberg74}, \cite{Giusti}, \cite{Mora}. Hence substituting in the latter equality $z=1/f$ and cleaning the
denominator we obtain the bound $(d\cdot \mu)^{2^{O(n)}}$ on $r_i,\, deg(h_{\bar{a},i}),\,
deg(h_{ij})$.
\medskip

{\it The generators after blow-up and their degree}.
Using the above we can describe the controlled transform of $\cI$ in terms of the controlled transforms of its modified generators.
Define  the modified generators of $\cI$ to be $$\bar{g}_i=\sum_{|\bar{a}|=\mu} h_{\bar{a}}\bar{u}^{\bar{a}}.$$

Then their controlled transforms are given by $$\sigma^c(\bar{g}_i)=u^{-\mu}_{n-k}\sigma^*(h_{\bar{a},i}u^{\bar{a}}). $$

Denote by $G(d,n,\mu)$ the bound on the degree of the resulting marked ideal $\cT'$ after a blow up applied to
a marked ideal $\cT$, provided that $d(\cT)\leq d$,  $n(\cT)\leq n$. Thus, by the above:

\begin{lemma}
$G(n,d,\mu)< (d\cdot \mu)^{2^{O(n)}}$.
\end{lemma}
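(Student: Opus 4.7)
The plan is to bound, on each chart of the blow-up of $\cT$, every polynomial appearing in the data $\Psi(\cT')$ (generators of $\cI_{\alpha',\beta'}$, open-set functions, birational map numerators/denominators, and local parameters) by $(d\mu)^{2^{O(n)}}$. The auxiliary data --- the local parameters $u_{\alpha',\beta',i}$, the new inequalities $f_{\alpha',\beta'}\ne 0$, and the birational maps $i_{\alpha'_1\beta'_1,\alpha'_2,\beta'_2}$ --- are all obtained from the blow-up substitutions $u_j=u_{i_0}x_{j+n}$ and $u_{i_0}=x_{i_0+n}$ together with pullbacks of the original parameters of $\cT$, so their degrees grow at worst by an additive/multiplicative factor bounded by $d+O(1)$ and remain well within $(d\mu)^{2^{O(n)}}$. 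The main content is therefore the degree estimate for the controlled transforms of the generators of $\cI$.

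First, replace each original generator $g_i$ of $\cI$ by the ``modified'' generator $\bar g_i=\sum_{|\bar a|=\mu} h_{\bar a,i}\,\bar u^{\bar a}$, capturing the $\cI_C^{\mu}$-component of $g_i\cdot f^{r_i}$ modulo $\cI_X$. The existence of such a representation follows from Lemma~\ref{bb}, because $C\subset\supp(\cI,\mu)$. To obtain an \emph{a priori} degree bound on $r_i$ and on the $h_{\bar a,i}$ and $h_{ij}$, I would introduce an auxiliary variable $z$, rewrite
\begin{equation*}
g_i=z^{R_i}\Bigl(\sum_{|\bar a|=\mu} H_{\bar a,i}\bar u^{\bar a}+\sum_j H_{ij}u_j\Bigr)+g_i\Bigl(\sum_{0\le j\le R_i-1}(zf)^j\Bigr)(1-zf),
\end{equation*}
and read off that $g_i$ lies in the ideal generated by the $\bar u^{\bar a}$, the $u_j$, and $1-zf$ in $\CC[x,z]$. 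Applying an effective ideal membership bound (Seidenberg~\cite{Seidenberg74}, Giusti, Mora) in $n+1$ variables with generators of degrees $\le\max(d,\mu)$ yields cofactors of degree at most $(d\mu)^{2^{O(n)}}$; substituting $z=1/f$ and clearing denominators then bounds $r_i$, $\deg h_{\bar a,i}$, $\deg h_{ij}$ all by $(d\mu)^{2^{O(n)}}$.

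Second, compute the controlled transform chart-by-chart. On the chart $(\CC^{n_{\alpha'}})_{\alpha'}$ where $u_{i_0}$ defines the exceptional divisor, the blow-up relations give $u_{i_0}=x_{i_0+n}$ and $u_j=x_{i_0+n}x_{j+n}$ for the other parameters of the center, so
\begin{equation*}
\sigma^*(\bar u^{\bar a})=x_{i_0+n}^{\,|\bar a|}\prod_{j\ne i_0}x_{j+n}^{\,a_j}=x_{i_0+n}^{\,\mu}\prod_{j\ne i_0}x_{j+n}^{\,a_j}.
\end{equation*}
Dividing the pullback $\sigma^*(\bar g_i)=\sum_{\bar a} \sigma^*(h_{\bar a,i})\cdot\sigma^*(\bar u^{\bar a})$ by the $\mu$-th power of the exceptional coordinate $x_{i_0+n}$ produces the controlled transform
\begin{equation*}
\sigma^c(\bar g_i)=\sum_{|\bar a|=\mu}\sigma^*(h_{\bar a,i})\,\prod_{j\ne i_0}x_{j+n}^{\,a_j},
\end{equation*}
which is a genuine polynomial whose degree is bounded by $\max_{\bar a}\deg\sigma^*(h_{\bar a,i})+\mu$. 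Since $\sigma^*$ here is simply viewing $h_{\bar a,i}\in\CC[x_1,\dots,x_n]$ inside $\CC[x_1,\dots,x_{2n-k}]$, this degree is already bounded by $\deg h_{\bar a,i}+\mu\le(d\mu)^{2^{O(n)}}$.

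The hard part is the effective ideal membership step: everything else (the blow-up substitutions, the pullbacks on each chart, and the elementary degree arithmetic for $u,f,v,w$) produces only polynomial growth in $d$, $\mu$, $n$, whereas the cofactor bound from Hermann--Seidenberg is intrinsically double-exponential in $n$ and contributes the $2^{O(n)}$ in the exponent. Once that ingredient is in place, comparing maxima over the polynomials in $\Psi(\cT')$ with those in $\Psi(\cT)$ yields $G(n,d,\mu)<(d\mu)^{2^{O(n)}}$, as claimed.
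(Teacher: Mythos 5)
Your proposal matches the paper's own argument essentially step for step: reduction to the modified generators $\bar g_i=\sum_{|\bar a|=\mu}h_{\bar a,i}\bar u^{\bar a}$ via Lemma \ref{bb}, the Rabinowitsch-style auxiliary variable $z$ with membership in the ideal generated by $\{\bar u^{\bar a}\},\{u_j\},1-zf$, the doubly exponential cofactor bound from Seidenberg--Giusti--Mora followed by the substitution $z=1/f$ and clearing denominators, and finally the chart-wise degree count for $\sigma^c(\bar g_i)$ and the remaining data of $\cT'$. The only (immaterial) discrepancy is your claim that the ideal membership step uses generators of degree $\le\max(d,\mu)$, whereas the $\bar u^{\bar a}$ have degree up to $d\mu$; this does not change the bound $(d\mu)^{2^{O(n)}}$.
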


\subsection{Elementary operations and elementary auxillary functions}
To estimate the complexity of the desingularization algorithm we introduce a few auxiliary
functions related to the ingredients of $\cT$. It is convenient to associate to $\cT$ with data 
$(m,d,n,l,q,\mu)$, the vector $$\gamma:=(r,m,d,n,l,q,\mu) \in {\bf Z}^7_{\geq 0},$$
where $r$ is the subscript of the element of the resolution  $(\cT_r)_{r=0,1,\ldots,}$

\subsubsection{ The effect of a single blow-up}    Summarizing the above we get
the following:

\begin{lemma}\label{boundG}  Consider the object $\cT:=(\{X_{\alpha,\beta},\cI_{\alpha,\beta},E_{\alpha,\beta},U_{\alpha,\beta}\}_{\alpha,\beta},n,\mu)$  with data $(m,\mu,d,n,l,q)$.
Let $\cT':=(\{X'_{\alpha',\beta'},\cI'_{\alpha',\beta'},E'_{\alpha',\beta'},U'_{\alpha',\beta'}\}_{\alpha',\beta'},{n_{\alpha'}},\mu)$ denote the object with data $(m,\mu,d',n',l',q')$ obtained from $\cT$ by a single blow-up at the center $C$ represented by the collection of closed sets $\{ C_{\alpha\beta''} \subset (\CC^{n_\alpha})_\alpha\} $ describing the center in  open subsets $U_{\alpha\beta''}\subset U_{\alpha\beta}$.
Assume that the maximal degree of the polynomials describing the center is less than $d$. Assume that $ q$ gives also a bound  for the number of open neighborhoods $U_{\alpha\beta''}$. Then
\begin{enumerate}
\item $d'\leq G(n,d,\mu)< (d\cdot \mu)^{2^{O(n)}}$;
\item  $n'\leq 2n$;
\item $l'\leq l+n$;
\item $q'\leq n\cdot q$;
\end{enumerate}
\end{lemma}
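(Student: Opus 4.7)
All four bounds follow from a direct inspection of the explicit chart description of the blow-up given in \S4.2; only (1) requires real work, and that work is essentially done in the paragraphs immediately preceding the lemma. I would organize the proof as four independent pieces of bookkeeping, disposing of (2), (4) and (3) first before tackling (1).

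Bound (2) is immediate: the construction assigns each new chart $(\CC^{n_{\alpha'}})_{\alpha'}$ the dimension $n_{\alpha'} = 2n_\alpha - k_{\alpha\beta'} \leq 2n_\alpha \leq 2n$. Bound (4) is a counting argument: each of the at most $q$ opens $U_{\alpha,\beta''}$ in the subcover describing $C$ gives rise to $n_\alpha - k_{\alpha\beta'} \leq n$ affine charts, one per parameter $u_{\alpha,\beta',i}$ defining $C$, so summing yields $q' \leq n\cdot q$. For (3) I would enumerate the polynomials of $\Psi(\cT')$ chart by chart: the pulled-back local parameters $u_{\alpha,\beta',j}$, the at most $n-k$ new coordinate functions $x_{j+n}$, the controlled transforms of the generators $g_{\alpha,\beta,i}$, the pullbacks of the chart-defining $f_{\alpha,\beta}$, and the components of the new birational transitions $i_{\alpha'_1\beta'_1,\alpha'_2\beta'_2}$, which are compositions of the old transitions with the linear projections $\pi_\alpha$. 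Each of these either replaces an existing polynomial in $\Psi(\cT)$ or is among the at most $n$ new coordinate functions, yielding $l' \leq l + n$.

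For (1) I would invoke the expansion constructed just above the lemma. The effective Nullstellensatz of Seidenberg--Giusti--Mora produces modified generators
\[
\bar g_i = \sum_{|\bar a|=\mu} h_{\bar a, i}\,\bar u^{\bar a}
\]
with $\deg h_{\bar a,i} \leq (d\mu)^{2^{O(n)}}$, and in the chart of $u_{i_0}$ the blow-up equations $u_j = u_{i_0} x_{j+n}$ give
\[
\sigma^{\rm c}(\bar g_i) = u_{i_0}^{-\mu}\,\sigma^*\!\Bigl(\sum_{|\bar a|=\mu} h_{\bar a,i}\bar u^{\bar a}\Bigr) = \sum_{|\bar a|=\mu} (h_{\bar a,i}\circ\pi_\alpha)\prod_{j} x_{j+n}^{a_j},
\]
whose degree grows by at most $\mu$ and so stays below $G(n,d,\mu) = (d\mu)^{2^{O(n)}}$. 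The analogous bound for the remaining polynomials in $\Psi(\cT')$ is automatic, since pullbacks along the linear projections $\pi_\alpha$ preserve degrees and the new coordinates are themselves of degree one.

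The main subtle point, and the only place where I would be careful, is ensuring that no polynomial slips into $\Psi(\cT')$ with a worse degree bound than $G(n,d,\mu)$ --- in particular the transition components $v,w$, which in principle could blow up if the gluing of old charts interacted badly with the new coordinates. The explicit construction in \S4.2 factors each new transition through an old transition and a linear projection, so this worry is unfounded once the factorization is written out and no further estimate is needed.
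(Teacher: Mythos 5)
Your proposal is correct and follows essentially the same route as the paper: the paper gives no separate proof, but states the lemma as a summary of the preceding chart-by-chart description of the blow-up (giving $n'\leq 2n$, $l'\leq l+n$, $q'\leq nq$) together with the Seidenberg--Giusti--Mora degree bound $(d\cdot\mu)^{2^{O(n)}}$ on the modified generators and their controlled transforms, which is exactly your argument for (1). Your extra remark about the transition components $v,w$ factoring through old transitions and linear projections is a point the paper leaves implicit, but it is handled the same way.
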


The effect of the single blow-up can be measured by the function
$$Bl(r,m,d,n,l,q,\mu):=(r+1,m,G(n,d,\mu),2n,l+n,n\cdot q,\mu).$$
The multiple effect of the $t$  blow-ups can be measured by the recursive function
$$\overline{Bl}(r,m,d,n,l,q,\mu,t)=Bl\circ \overline{Bl}(r,m,d,n,l,q,\mu,t-1),$$
or, more briefly, $$\overline{Bl}(\gamma,t)= Bl(\overline{Bl}(\gamma,t-1)).$$
Note that $$\overline{Bl}(r,m,d,n,l,q,\mu,t)=(r+t,m,\overline{G}(n,d,\mu,t),2^tn, l+2^{t-1}n,(2^{t+1}-1)\cdot n^tq,\mu)$$
for the relevant function $\overline{G}(n,d,\mu,t)$.

\section{Bounds on multiplicities and  degrees of  coefficient ideals}

\subsubsection{ The maximal multiplicity of $\cI$ on the subvariety $X\subset \CC^n$.}

Let $d$ be the maximal degree of $(X,\cI)$  on $\CC^n$.
Denote by $M(d,n)$ a bound on the multiplicity of the ideal $\cI$ on $X$. To estimate $M(d,n)$, we can
assume (after a linear transformation of the coordinates) that the order of the polynomial
$u_{i_j}-x_j$ is at least 2 for all $1\le j\le n-m$. For any polynomial $g\in \cI_{\alpha,\beta}$ one can
find polynomials $h_j \in \CC[x_1,\dots,x_n], \, 0\le j\le n-m$ and $h\in \CC[x_{n-m+1},
\dots, x_n]$  such that
\begin{eqnarray} \label{11} h_0\cdot g\,+\, \sum_{1\le j\le n-m}h_j\cdot u_{i_j}\,=\,h(x_{n-m+1},
\dots, x_n).
\end{eqnarray}
Moreover, we can rewrite the latter equality over the field $\CC(x_{n-m+1},
\dots, x_n)$ in the form
\begin{eqnarray} \label{12}
  \tilde{h}_0\cdot g\,+\, \sum_{1\le j\le n-m} \tilde{h}_j\cdot u_{i_j}\,=\, 1,
\end{eqnarray}
where $\tilde{h}_j=\frac{h_i}{h}\in \CC(x_{n-m+1},\ldots,x_n)[x_1,\ldots,x_{n-m}]$, with the common denominator in  $\CC[x_{n-m+1},\ldots,x_n]$ for  $0\le j\le n-m.$

We apply to  $(\ref{12})$ the Effective Nullstellensatz (see
e.g. \cite{Brownawell}, \cite{Heintz}, \cite{Kollar}, \cite{Jelonek}). This gives us  the bound
$d^{O(n)}$ on the degrees of
$\tilde{h_i}=h_i/h$ with respect to variables $x_1,\dots,x_{n-m}$  (for certain solutions) . To find $h_i/h$ one can solve
 the latter equality considering it as a linear system
over the field $\CC(x_{n-m+1},
\dots, x_n)$.

The algorithm can find $$\tilde{h_j}=\sum a_{I,j}x^I$$  with indeterminates $a_{I,j} \in  \CC(x_{n-m+1},\ldots,x_n)$, and monomials $x^I=x_1^{i_1}\cdot\ldots\cdot x_{n-m}^{i_{n-m}}$ with degrees  $i_1+\ldots+ i_{n-m}\leq d^{O(n)} $ substituting  $\tilde{h_j}$ in (\ref{12}) and  solving linear system over the field  $\CC(x_{n-m+1},\ldots,x_n)$. Clearing the common denominator  in (\ref{12}) gives
(\ref{11})  with  $$deg(h), deg(h_j) \leq d^{O(n^2)}, \quad 1\le j\le n-m $$

Now we have to estimate the maximal multiplicity  $\ord_x(g_{|X})$ for $g\in \CC[x_1,\ldots,x_n]$,  such that $deg(g)\leq d$ and  $x\in X$. We use (\ref{11}).
We get immediately by the above:

\begin{lemma}\label{boundM} The maximal multiplicity   $\ord_x(g_{|X})$ on $X$ for any  function $g\in \CC[x_1,\ldots,x_n]$,  such that $deg(g)\leq d$ and  $x\in X$, is bounded by the function   $M(d,n)= d^{O(n^2)}$ constructed as above:
$$ \ord_x(g_{|X})\leq M(d,n).$$
\end{lemma}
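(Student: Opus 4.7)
The plan is to derive the bound directly from the identity (\ref{11}) preceding the statement, namely
\[
h_0 \cdot g \,+\, \sum_{j=1}^{n-m} h_j \cdot u_{i_j} \,=\, h(x_{n-m+1}, \dots, x_n),
\]
together with the Effective--Nullstellensatz degree bound $\deg h,\, \deg h_0,\, \deg h_j \le d^{O(n^2)}$ already established. The strategy is to restrict this identity to $X$, pass to local coordinates on $X$, and then bound the order of a nonzero polynomial by its total degree.

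After the linear change of coordinates that guarantees $\ord(u_{i_j} - x_j) \ge 2$ for $1 \le j \le n-m$, the coordinates $x_{n-m+1}, \dots, x_n$ restrict to a regular system of parameters on the smooth variety $X \cap U$ at any point $x$: indeed $u_{i_1}, \dots, u_{i_{n-m}}$ cut out $X$ and are transversal to the remaining $x_{j}$'s modulo $\mathfrak{m}_x^2$, so $dx_{n-m+1}, \dots, dx_n$ span the cotangent space of $X$ at $x$. Restricting the identity above to $X$ kills every term of the middle sum (since each $u_{i_j}$ lies in $\cI_X$), leaving
\[
(h_0)_{|X} \cdot g_{|X} \,=\, h_{|X}
\]
in $\cO_X$. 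Because $h$ involves only $x_{n-m+1}, \dots, x_n$, its restriction $h_{|X}$ is an honest polynomial in the local parameters of $X$ at $x$ of total degree at most $\deg h$.

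Taking orders at $x$ in the regular local ring $\cO_{X,x}$ and using additivity,
\[
\ord_x(g_{|X}) \,=\, \ord_x(h_{|X}) \,-\, \ord_x\!\bigl((h_0)_{|X}\bigr) \,\le\, \ord_x(h_{|X}).
\]
Since the order of a nonzero polynomial at any point is bounded by its total degree,
\[
\ord_x(h_{|X}) \,\le\, \deg(h) \,\le\, d^{O(n^2)},
\]
which gives the desired bound $M(d,n) = d^{O(n^2)}$.

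The main obstacle, and really the only point requiring care, is the nondegeneracy needed to subtract the orders: one must verify that neither $(h_0)_{|X}$ nor $h_{|X}$ vanishes identically on the component of $X$ through $x$. For $h_{|X}$ this follows because $h \in \CC[x_{n-m+1}, \dots, x_n]$ is nonzero by construction (it is the common denominator cleared from the Bezout identity (\ref{12}) over $\CC(x_{n-m+1}, \dots, x_n)$) and these variables are local parameters on $X$, so a nonzero polynomial in them cannot vanish on any component of $X \cap U$. The analogous argument, together with the genericity built into the Nullstellensatz solution, handles $(h_0)_{|X}$. The remaining edge case $g_{|X} \equiv 0$ near $x$ renders the claim vacuous since $\ord_x$ is applied to functions not identically zero in any neighborhood of $x$.
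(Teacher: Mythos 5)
Your proof is correct and takes essentially the same route as the paper's one-line argument: restrict the identity (\ref{11}) to $X$ so that the $u_{i_j}$-terms vanish, then bound $\ord_x(g_{|X})\le \ord_x(h_{|X})\le \deg(h)\le d^{O(n^2)}$ using that $x_{n-m+1},\dots,x_n$ restrict to local parameters on $X$ after the arranged linear change of coordinates. The nonvanishing checks you add (for $h_{|X}$ and $(h_0)_{|X}$) are sound and merely make explicit what the paper leaves implicit.
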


\begin{proof}
$\ord_x(g_{|X})\leq  \ord_x(h_0\cdot g)_{|X}=\ord_x h(x_{n-m+1},
\dots, x_n)_{|X}=\ord_x h(x_{n-m+1},\dots, x_n)\leq deg(h)\leq  M(d,n)= d^{O(n^2)}$.
\end{proof}

\subsubsection{\bf Derivations  on the subvariety $X\subset \CC^n$.}

In order to follow the construction of the algorithm we use the language of derivations $\Der_X$ on $X$.
Since our $X$ is embedded into $\CC^n$ it is natural to  represent all objects on $X$ as the restriction of the relevant objects on $\CC^n$ to $X\subset \CC^n$. Unfortunately  the sheaf of derivations on $\CC^n$ does not restrict well to $X$: $$\Der_{{\CC^n}|X}\neq  \Der_X.$$
Instead we consider 
$$\Der_{{\CC^n,X}}:=\{D \in \Der_{{\CC^n}} \mid D(\cI_X)\subset (\cI_X)\}.$$

\begin{lemma}
Let $u_1=0\ldots,u_{n-m}=0$ describe  $X\subset {\bf C^n}$. Then the ring
$\Der_{{\CC^n,X}}$ is generated  by
$$ \{u_i\cdot
d_{u_j}\}_{1\le i\le n-m,\, 1\le j\le n-m} \cup \{d_{u_j}\}_{n-m<j\le n}.$$
In particular the restriction $\Der_{{\CC^n,X}|X}=\Der_X$  is generated by $$ \{d_{u_j}\}_{n-m<j\le n}.$$
\end{lemma}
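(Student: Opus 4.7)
The first step is to choose a regular system of parameters and identify the derivation module. By the setup of Section 4.1.1, $u_1,\ldots,u_n$ form (locally) a complete system of parameters on $\CC^n$, so $d_{u_1},\ldots,d_{u_n}$ form a free basis of $\Der_{\CC^n}$ over $\CC[x_1,\ldots,x_n]$ (after restriction to the relevant open set). Thus any $D\in\Der_{\CC^n}$ admits a unique expansion $D=\sum_{j=1}^{n}f_j\,d_{u_j}$ with $f_j=D(u_j)$.

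Next I would characterize the condition $D\in\Der_{\CC^n,X}$, i.e.\ $D(\cI_X)\subset\cI_X$. Since $\cI_X=(u_1,\ldots,u_{n-m})$ and derivations satisfy the Leibniz rule, it suffices to check $D(u_i)\in\cI_X$ for $1\le i\le n-m$. Using the basis above, $D(u_i)=f_i$, so the condition becomes $f_i\in(u_1,\ldots,u_{n-m})$ for $1\le i\le n-m$; the coefficients $f_j$ for $j>n-m$ are unrestricted.

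Writing each such $f_i=\sum_{k=1}^{n-m}g_{i,k}\,u_k$ gives the decomposition
\[
D=\sum_{1\le i,k\le n-m}g_{i,k}\,(u_k\,d_{u_i})\;+\;\sum_{n-m<j\le n}f_j\,d_{u_j},
\]
which exhibits the generating set claimed. Conversely, the elements $u_k\,d_{u_i}$ (for $i\le n-m$) and $d_{u_j}$ (for $j>n-m$) all lie in $\Der_{\CC^n,X}$: the former send $u_\ell$ to $u_k\delta_{i\ell}$, which vanishes or equals $u_k\in\cI_X$, and the latter send $u_\ell$ (for $\ell\le n-m$) to $\delta_{j\ell}=0$. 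Hence $\Der_{\CC^n,X}$ is exactly the module they generate.

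Finally, restricting to $X$ sets $u_1=\ldots=u_{n-m}=0$, so each generator $u_k\,d_{u_i}$ restricts to zero and only the $d_{u_j}$ with $j>n-m$ survive. Since these are precisely the derivations with respect to the parameters $u_{n-m+1},\ldots,u_n$ on $X$, they form a generating set for $\Der_X$. The only mildly delicate point is the reliance on $u_1,\ldots,u_n$ forming a regular system of parameters (so that the $d_{u_j}$ are genuinely a free basis of $\Der_{\CC^n}$); this is guaranteed by item~(4) of~\ref{4.1.1}, so no serious obstacle arises.
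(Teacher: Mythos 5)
Your proof is correct and is essentially the argument the paper intends: the paper's own proof is just ``This follows from the definition,'' and your expansion $D=\sum_j D(u_j)\,d_{u_j}$ together with the Leibniz-rule reduction of $D(\cI_X)\subset\cI_X$ to $D(u_i)\in(u_1,\ldots,u_{n-m})$ for $i\le n-m$ is precisely the verification that phrase encapsulates. Your closing caveat about needing $u_1,\ldots,u_n$ to be a genuine parameter system (so that the $d_{u_j}$ form a basis) is apt, and is exactly the issue the paper handles in the subsequent lemma by passing to the open set where the Jacobian determinant is nonzero and to the modified derivations $d_{u_j}'$.
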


\begin{proof} This follows from the definition.
\end{proof}

Note that  since we have
$$(d_{u_j})_j=((\partial u_j/\partial x_i)_{i,j})^{-1}\cdot (d_{x_i})_i= \frac{1}{det((\partial u_j/\partial x_i)_{i,j})} adj((\partial u_j/\partial x_i)_{i,j})\cdot (d_{x_i})_i$$
we immediately get the following:

\begin{lemma} Let $\cU:=\{x\in \CC^n \mid det((\partial u_j/\partial x_i)_{i,j})\neq 0\}$.
The sheaf $\Der_{{\CC^n,X}}$ is generated over $U\subset \CC^n$ by
\begin{eqnarray} \label{*} \{u_i\cdot
d_{u_j}'\}_{1\le i\le n-m,\, 1\le j\le n-m} \cup \{d_{u_j}'\}_{n-m<j\le n},
\end{eqnarray}
 where
$$(d_{u_j}')_j= adj((\partial u_j/\partial x_i)_{i,j})\cdot (d_{x_i})_i .$$
\end{lemma}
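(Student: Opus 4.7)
The plan is to simply rewrite the generating set given by the previous lemma in terms of the ambient coordinates $x_1,\ldots,x_n$, taking advantage of the fact that the Jacobian is invertible on $\cU$.

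First, by the previous lemma, $\Der_{\CC^n,X}$ is generated as an $\cO_{\CC^n}$-module by the derivations
$\{u_i\cdot d_{u_j}\}_{1\le i\le n-m,\,1\le j\le n-m}\cup\{d_{u_j}\}_{n-m<j\le n}$.
Since $u_1,\ldots,u_n$ are parameters on some open neighborhood contained in $\cU$, the chain rule $d_{x_i}=\sum_{j}(\partial u_j/\partial x_i)\,d_{u_j}$ gives the matrix identity $(d_{x_i})_i=M^{T}(d_{u_j})_j$, where $M:=(\partial u_j/\partial x_i)_{i,j}$; inverting and using Cramer's rule exactly as recalled in the text yields
\begin{equation*}
(d_{u_j})_j\;=\;\frac{1}{\det(M)}\cdot\operatorname{adj}(M)\cdot(d_{x_i})_i\;=\;\frac{1}{\det(M)}\,(d'_{u_j})_j
\end{equation*}
on $\cU$.

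Now on $\cU$ the function $\det(M)$ is an invertible regular function, so $\frac{1}{\det(M)}$ is a unit in $\cO_\cU$. Consequently, multiplying each of the previous generators by $\det(M)$ (equivalently, replacing $d_{u_j}$ by $d'_{u_j}=\det(M)\cdot d_{u_j}$) produces a set of generators of the same $\cO_\cU$-module:
\begin{equation*}
\{u_i\cdot d'_{u_j}\}_{1\le i\le n-m,\,1\le j\le n-m}\cup\{d'_{u_j}\}_{n-m<j\le n}.
\end{equation*}
These are exactly the generators claimed in (\ref{*}). The only substantive point is that $\det(M)$ be invertible, which holds by the very definition of $\cU$; there is no genuine obstacle, as the statement is essentially a coordinate change applied to the generating set of the previous lemma, cleared of denominators by multiplication by a unit.
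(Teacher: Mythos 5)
Your proof is correct and follows essentially the same route as the paper: the paper derives this lemma immediately from the preceding generating set for $\Der_{\CC^n,X}$ together with the identity $(d_{u_j})_j=\frac{1}{\det((\partial u_j/\partial x_i)_{i,j})}\operatorname{adj}((\partial u_j/\partial x_i)_{i,j})\cdot(d_{x_i})_i$, i.e. $d'_{u_j}=\det(M)\,d_{u_j}$, with $\det(M)$ a unit on the locus where it does not vanish. Your observation that multiplying generators by this unit leaves the generated module unchanged is exactly the (implicit) argument in the paper, so nothing is missing.
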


The  derivations (\ref{*}) generate a subsheaf $\overline{\Der_{{\CC^n,X}}}$ of   $\Der_{{\CC^n,X}}$  over  $\CC^n$.  Both sheaves coincide over $U$.  Thus we will replace $\Der_{{\CC^n,X}}$ with
$\overline{\Der_{{\CC^n,X}}}$ for our computations over $U$.
\begin{lemma} \label{degree1} Let $\cI$ be any ideal on ${\bf C }^n$. Assume the maximal degree of some generating set of $\cI$ is  $\leq d_1$, and the maximal degree of $u_i$ is less than $d_2$. Then the maximal degree of generators of $\overline{\Der_{{\CC^n,X}}}(\cI)$ is  bounded by $ d_1+nd_2$.
\end{lemma}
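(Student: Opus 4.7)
The plan is to expand each generator of $\overline{\Der_{\CC^n,X}}$ in the standard basis $\{d_{x_i}\}$, bound the coefficient degrees via the adjugate formula, and then apply the resulting derivation to a generator of $\cI$. By the formula preceding the lemma,
$$(d'_{u_j})_j = \mathrm{adj}\bigl((\partial u_k/\partial x_i)_{i,k}\bigr)\cdot (d_{x_i})_i,$$
so $d'_{u_j} = \sum_{i=1}^n A_{ij}(x)\, d_{x_i}$, where each $A_{ij}$ is, up to sign, an $(n-1)\times(n-1)$ minor of the Jacobian matrix $J := (\partial u_k/\partial x_i)$. Since every entry of $J$ has degree at most $d_2 - 1$, such a minor has degree at most $(n-1)(d_2-1)$.

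Now let $g$ be any generator of $\cI$, of degree at most $d_1$. Then $\partial g/\partial x_i$ has degree at most $d_1 - 1$, so
$$d'_{u_j}(g) = \sum_{i} A_{ij}(x)\cdot \frac{\partial g}{\partial x_i}$$
has degree at most $(n-1)(d_2-1) + (d_1-1) \le d_1 + (n-1)d_2$. For the generators of $\overline{\Der_{\CC^n,X}}$ of the second form, multiplication by $u_i$ (of degree $\le d_2$) gives $u_i\cdot d'_{u_j}(g)$ of degree at most $d_2 + (n-1)(d_2-1) + (d_1 - 1) = d_1 + n d_2 - n$. The ideal $\overline{\Der_{\CC^n,X}}(\cI)$ is generated by the original generators of $\cI$ (each of degree $\le d_1$) together with the images $D(g)$ as $D$ runs over the listed generating set of $\overline{\Der_{\CC^n,X}}$, and all these are bounded in degree by $d_1 + n d_2$.

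There is no real obstacle here; the only delicate point is recognising that the coefficients $A_{ij}$ are honest polynomial entries of the adjugate rather than rational functions of the form $(\text{minor})/\det J$. This is precisely why the authors chose $d'_{u_j}$ (using $\mathrm{adj}$) instead of $d_{u_j}$ (which involves dividing by $\det J$) as the generating set of $\overline{\Der_{\CC^n,X}}$: the absence of the $\det J$ denominator is what keeps the degree linear in $d_2$ rather than producing the much worse $n \cdot \deg(\det J)$ blow-up.
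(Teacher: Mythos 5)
Your proof is correct and follows exactly the route the paper intends: the paper states this lemma without an explicit proof, treating it as immediate from the preceding description of the generators of $\overline{\Der_{\CC^n,X}}$ via the adjugate, and your degree count (adjugate entries are $(n-1)\times(n-1)$ minors, then apply the Leibniz/linearity observation to reduce to $D_k(g_j)$ for generators $g_j$ and listed derivations $D_k$) is the intended justification. The bounds you obtain, $d_1+(n-1)d_2$ and $d_1+nd_2-n$, are indeed within the stated bound $d_1+nd_2$.
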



\subsubsection{ Construction of  the coefficient homogenized companion ideal}

Recall that in step $2$ for the marked ideal $(\cI,\mu)$,  we find the maximal multiplicity
$\bar{\mu}\leq M(d,n)$ of the nonmonomial part $\cN(\cI)$, and construct the 
companion ideal $\cO(\cI)$, for which we immediately take homogenized coefficient ideal
$\cJ:=\cC(\cH(\cO(\cI)))$.
In our situation of the set $X\subset \CC^n$  defined by set of parameters $\{u_i\}_{1\le i\le n-m}$ on the open set $U\subset \CC^n$ we will use $\overline{\Der_{{\CC^n,X}}}$ instead of $\Der_X$ for the definition above.
Immediately from the definition, we get
a formula for  a bound $A(d,n,\mu)$ on the degrees of generators of the marked ideal
$\cC (\cH (\cI_q))$. Note first that we have the
 the following bounds on the multiplicities:

\begin{lemma}\label{multiplicity}
$\mu(\cN(\cI))=\bar{\mu}$, \quad   $\mu(\cO(\cI))\le \mu \cdot \bar\mu$ \quad  and $\mu(\cJ)\le (\mu \cdot \bar\mu)! \leq (\mu \cdot M(d,n))!\,.$
\end{lemma}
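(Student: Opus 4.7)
The statement has three claims, and I would handle them in order since each builds on the previous.

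For the first claim, $\mu(\cN(\cI)) = \bar\mu$, the content is essentially bookkeeping: $\bar\mu$ was defined as $\ord_{\cN(\cI)} = \max\{\ord_x(\cN(\cI)) : x \in \supp(\cI,\mu)\}$, and in both cases of the companion-ideal construction (Definition in Step 2a), the nonmonomial part enters paired with exactly this order, so the marked multiplicity assigned to $\cN(\cI)$ is $\bar\mu$.

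For the second claim, I would split on the same two cases as the definition of $\cO(\cI)$. If $\bar\mu \geq \mu$, then $\cO(\cI) = (\cN(\cI), \bar\mu)$ and $\mu(\cO(\cI)) = \bar\mu \leq \mu\bar\mu$. If $\bar\mu < \mu$, then $\cO(\cI) = (\cN(\cI), \bar\mu) + (\cM(\cI), \mu - \bar\mu)$, and applying the addition rule from Section 2.7 (the multiplicity of a sum is the product of the multiplicities), we get $\mu(\cO(\cI)) = \bar\mu(\mu - \bar\mu) \leq \bar\mu \cdot \mu$.

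The third claim, $\mu(\cJ) \leq (\mu\bar\mu)!$, is where the factorial appears. The plan has two steps. First, homogenization preserves the marked multiplicity: by definition $\cH(\cK, \nu) = (\cH(\cK), \nu)$, so $\mu(\cH(\cO(\cI))) = \mu(\cO(\cI)) \leq \mu\bar\mu$. Set $\nu := \mu(\cH(\cO(\cI)))$. Second, for the coefficient ideal
\[
\cC(\cH(\cO(\cI))) = \sum_{i=1}^{\nu} (\cD^i \cH(\cO(\cI)), \nu - i),
\]
the iterated addition rule of Lemma \ref{le: operations} assigns to the sum the product of the individual multiplicities, i.e.\ $\prod_{i=1}^{\nu-1}(\nu - i) = (\nu - 1)!$ (the trivial $i=\nu$ term contributes nothing nontrivial). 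Hence $\mu(\cJ) \leq (\nu - 1)! \leq \nu! \leq (\mu\bar\mu)!$. The final inequality $(\mu\bar\mu)! \leq (\mu M(d,n))!$ is immediate from monotonicity of factorial together with the multiplicity bound $\bar\mu \leq M(d,n)$ established in Lemma \ref{boundM}.

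The main point requiring care is step three: one must apply the addition formula \emph{iteratively} across all $\nu$ (or $\nu-1$) summands in $\cC$ rather than pairwise in some asymmetric way, since only then does the product of decreasing multiplicities telescope into a factorial. Once this is observed, the estimate is clean and the rest is a chain of elementary inequalities.
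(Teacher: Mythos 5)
Your proposal is correct and follows the same route the paper implicitly takes: the paper states this lemma without a separate proof, since it is exactly the bookkeeping you carry out — the companion ideal's multiplicity comes from the product rule for addition of marked ideals, homogenization preserves the marked multiplicity, and the iterated addition rule applied to $\cC(\cH(\cO(\cI)))=\sum_i(\cD^i\cH(\cO(\cI)),\nu-i)$ telescopes the decreasing multiplicities into a factorial bounded by $(\mu\bar\mu)!\le(\mu M(d,n))!$ via Lemma \ref{boundM}. Your handling of the degenerate zero-multiplicity summand (dropping it so the product is $(\nu-1)!\le\nu!$) is the sensible reading of the paper's indexing and does not affect the stated bound.
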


As a Corollary, we obtain:

\begin{lemma}\label{boundA} The maximal degree of generators of
$\cJ:=\cC(\cH(\cO(\cI)))$ is bounded by $$A(d,n,\mu):=(\mu\cdot \bar{\mu})!nd \leq (\mu \cdot  M(d,n))! nd \le (d^{O(n^2)})!\,.$$
\end{lemma}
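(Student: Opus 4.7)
The strategy is to propagate a degree bound through the three successive constructions defining $\cJ$: the companion ideal $\cO(\cI)$, its homogenization $\cH(\cO(\cI))$, and finally the coefficient ideal $\cC(\cH(\cO(\cI)))$. The key quantitative tool throughout is Lemma~\ref{degree1}: since the local parameters $u_i$ for $X\subset \CC^n$ are among the defining polynomials of $\cT$, they have degree $\le d$, so a single application of $\overline{\Der_{\CC^n,X}}$ raises the maximal degree of generators of an ideal by at most $nd$. In particular, $\cD^k(\cI')$ has generators of degree at most $\deg(\cI')+k\cdot nd$ for any ideal $\cI'$.

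The first ingredient is the multiplicity control provided by Lemma~\ref{multiplicity}: $\mu(\cO(\cI))\le \mu\bar\mu$ and $\mu(\cJ)\le (\mu\bar\mu)!$. The factorial here is what ultimately dominates the degree estimate, and I plan to show that every other contribution to the degree is swallowed by this factorial.

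Next I track degrees stage by stage. The monomial part $\cM(\cI)$ is a product of ideals of divisors in $E$, each described by a single $u_i$ of degree $\le d$, and $\cN(\cI)$ is a subideal of $\cI$; hence both have generators of degree $O(\mu d)$. Using $(\cI_1,\mu_1)+(\cI_2,\mu_2)=(\cI_1^{\mu_2}+\cI_2^{\mu_1},\mu_1\mu_2)$ from \S2.7, the companion $\cO(\cI)$ has generators of degree at most $O(\mu\bar\mu\, d)$. Iterating Lemma~\ref{degree1}, $\cD^k\cO(\cI)$ has generators of degree at most $O(\mu\bar\mu\, d)+k\cdot nd$. Since $\cH(\cO(\cI))$ is built from products $\cD^i\cO(\cI)\cdot T(\cO(\cI))^i$ with $i\le \mu(\cO(\cI))-1$ and $T(\cO(\cI))=\cD^{\mu(\cO(\cI))-1}\cO(\cI)$, its generators have degrees bounded by $O((\mu\bar\mu)^2\cdot nd)$.

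Finally, $\cJ=\cC(\cH(\cO(\cI)))=\sum_{i=1}^{\mu_\cO}(\cD^i\cH(\cO(\cI)),\mu_\cO-i)$ with $\mu_\cO\le \mu\bar\mu$. Unwinding the generalized addition formula from \S2.7 turns this sum into an ideal whose generators are products in which each $\cD^i\cH(\cO(\cI))$ is raised to a power equal to the product of the complementary multiplicities, which is at most $\mu_\cO!/(\mu_\cO-i)$. Applying Lemma~\ref{degree1} once more to bound $\deg(\cD^i\cH(\cO(\cI)))$, one obtains a total degree bound of the shape $\mu_\cO!\cdot(\text{polynomial in }\mu,\bar\mu,n,d)$, and absorbing the polynomial factor into the factorial gives the desired estimate $\deg\cJ\le (\mu\bar\mu)!\,nd$. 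Combined with Lemma~\ref{boundM} to substitute $\bar\mu\le M(d,n)=d^{O(n^2)}$, this yields the claimed bound $(d^{O(n^2)})!$.

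The main obstacle to watch for is the bookkeeping in the iterated non-associative sum defining $\cC$, where individual exponents of generators can blow up; the fact that all such exponents are dominated by $\mu_\cO!$ is exactly what makes the factorial $(\mu\bar\mu)!$ the right overall scale and makes all other polynomial contributions negligible.
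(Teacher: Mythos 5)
Your proposal is correct and takes essentially the same route as the paper, whose proof is simply a citation of Lemma~\ref{degree1} (degree growth under $\overline{\Der_{\CC^n,X}}$) combined with the multiplicity bounds of Lemma~\ref{multiplicity}; your stage-by-stage propagation through $\cO$, $\cH$, $\cC$ just writes that argument out, with the same absorption of polynomial factors into the factorial and of $\bar\mu$ into $M(d,n)=d^{O(n^2)}$. (The aside that $\cN(\cI)$ is a subideal of $\cI$ is inaccurate --- its generators are the generators of $\cI$ divided by the monomial part --- but their degrees are still at most $d$, so nothing in the estimate changes.)
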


\begin{proof} This follows from Lemma \ref{degree1}.
\end{proof}

\subsubsection{\bf Restriction to hypersurfaces of maximal contact, and to exceptional divisors}

In  step 1, we restrict $\cI$ to intersections of the exceptional divisors and maximal contact.

We need to estimate a bound $B(d,n,\mu)$ for the degree of the maximal contact $u\in \overline{\Der_{{\CC^n,X}}}^{\bar{\mu}-1}(\cN(\cI))$. The following is an immediate
consequence of Lemma \ref{degree1}:

 \begin{lemma}\label{boundB} The maximal degree of any maximal contact
$u\in \overline{\Der_{{\CC^n,X}}}^{\bar{\mu}-1}(\cN(\cI))$ is bounded by $$B(d,n,\mu):=\bar{\mu}nd
\leq  M(d,n)\cdot nd\le d^{O(n^2)}.$$

\end{lemma}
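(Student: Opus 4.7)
\textbf{Proof plan for Lemma \ref{boundB}.} The plan is to iterate Lemma \ref{degree1} exactly $\bar{\mu}-1$ times, tracking how much the degree of a generating set of the derivative ideal can grow at each step, and then invoke Lemma \ref{boundM} to control $\bar{\mu}$.

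First, I would fix a generating set of $\cN(\cI)$ of degree at most $d$. (The nonmonomial part $\cN(\cI)$ is obtained from $\cI$ by factoring out monomials in the exceptional coordinates, which cannot raise degrees; so the same degree bound $d$ applies.) The local parameters $u_1,\dots,u_n$ for $X \subset \CC^n$ used to define $\overline{\Der_{\CC^n,X}}$ are, by the setup in \S\ref{4.1.1}, polynomials of degree at most $d$. Hence Lemma \ref{degree1} applies with $d_1 \le d$ and $d_2 \le d$, giving that a generating set of $\overline{\Der_{\CC^n,X}}(\cN(\cI))$ has degree at most $d + nd$.

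Next, I would iterate: if after $k$ applications of $\overline{\Der_{\CC^n,X}}$ the generators have degree bounded by $d_1^{(k)}$, then Lemma \ref{degree1} applied to this generating set yields the bound $d_1^{(k+1)} \le d_1^{(k)} + nd$. By induction, $d_1^{(k)} \le d + k\cdot nd \le (k+1) n d$, so after $\bar{\mu}-1$ steps the generators of $\overline{\Der_{\CC^n,X}}^{\bar{\mu}-1}(\cN(\cI))$ have degree at most $\bar{\mu}\, nd$. Since any tangent direction $u$ is an element of this ideal (in fact a generator chosen to have order $1$), its degree is likewise bounded by $\bar{\mu}\, nd$.

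Finally, I would substitute the bound $\bar{\mu} \le M(d,n) = d^{O(n^2)}$ from Lemma \ref{boundM} to conclude $B(d,n,\mu) \le M(d,n)\cdot nd \le d^{O(n^2)}$. The main (and essentially only) technical point is to confirm that the degrees of the defining parameters $u_i$ are themselves bounded by $d$, so that the iteration of Lemma \ref{degree1} contributes only an additive $nd$ per step rather than a multiplicative blow-up; everything else is a straightforward induction.
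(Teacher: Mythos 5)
Your proposal is correct and follows essentially the same route as the paper, which derives Lemma \ref{boundB} as an immediate consequence of Lemma \ref{degree1}: iterate the additive bound $d_1 \mapsto d_1 + nd$ for $\bar{\mu}-1$ applications of $\overline{\Der_{{\CC^n,X}}}$ (using that the parameters $u_i$ and the generators of $\cN(\cI)$ have degree at most $d$), and then bound $\bar{\mu}$ by $M(d,n)=d^{O(n^2)}$ via Lemma \ref{boundM}. Your explicit induction simply spells out what the paper leaves as "immediate."
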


\subsubsection{A bound for the number of generators  of $J$}

First, we state the basic properties concerning the number of generators  of the ideal in 
the following lemma:

\begin{lemma}
\begin{enumerate} \item The number of generators of $\overline{\Der_{\CC^n,X}}(\cI)$ is given
by $(n+1)l(\cI)$.
\item The number of generators of $\overline{\Der_{\CC^n,X}}^{i}(\cI)$ is given
$(n+1)^il(\cI)$.
\item The number of generators of $\cI^i$ is bounded  by $l(\cI)^i$.
\item The number of generators of $\cI'=\cO(\cI)$ can be bounded  by $L_{\cO}(l(\cI),\mu):=l(\cI)^\mu+1$.
\item The number of generators of $\cI'=\cH(\cI,\mu)$ can be bounded  by $L_{\cH}(l(\cI),\mu):=\mu (n+1)^{\mu^2} l(\cI)^\mu
$.
\item The number of generators of $\cI'=\cC(\cI,\mu)$ can be bounded  by $L_{\cC}(l(\cI),\mu)):=\mu (n+1)^{\mu !} l(\cI)^{\mu !}
$.
\end{enumerate}
\end{lemma}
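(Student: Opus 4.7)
The plan is to estimate each listed bound by a direct count: read each construction (derivative, power, sum, product, companion, homogenization, coefficient ideal) as a concrete polynomial operation on generators, and propagate generator counts through it. No geometric input is needed beyond the definitions already recalled in Sections 2 and 3 (and the explicit local form of $\overline{\Der_{\CC^n,X}}$ given in the previous subsection).

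For (1), I would fix a local generating set $D_1,\dots,D_N$ of $\overline{\Der_{\CC^n,X}}$ with $N\le n$ (as in the local description just above the lemma) and observe that if $g_1,\dots,g_{l(\cI)}$ generate $\cI$, then $\{g_j\}\cup\{D_k g_j\}_{k,j}$ generates $\cD(\cI)=\cI+\sum_k D_k(\cI)$, giving the $(n{+}1)\,l(\cI)$ bound. Part (2) follows by induction, multiplying by $(n{+}1)$ at each application of $\overline{\Der_{\CC^n,X}}$. For (3), products of $i$ generators span $\cI^i$, and there are at most $l(\cI)^i$ such products.

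For (4), I would combine the definition $\cO(\cI)=(\cN(\cI),\bar\mu)+(\cM(\cI),\mu-\bar\mu)$ with the marked-ideal sum formula to obtain the ideal $\cN(\cI)^{\mu-\bar\mu}+\cM(\cI)^{\bar\mu}$; part (3) controls the first summand by $l(\cI)^\mu$, and $\cM(\cI)$ is principal so its power contributes a single generator, yielding $l(\cI)^\mu+1$. For (5), I would expand
\[
\cH(\cI,\mu)=\Bigl(\sum_{i=0}^{\mu-1}\cD^i(\cI)\cdot T(\cI)^i,\,\mu\Bigr),\qquad T(\cI)=\cD^{\mu-1}(\cI),
\]
and bound each summand's generator count by
\[
(n{+}1)^i\,l(\cI)\cdot\bigl((n{+}1)^{\mu-1}l(\cI)\bigr)^i=(n{+}1)^{i\mu}\,l(\cI)^{i+1},
\]
using (2) and (3). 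Summing $\mu$ terms and using the worst case $i=\mu-1$ gives $\mu(n{+}1)^{\mu^2}l(\cI)^\mu$.

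The heaviest bookkeeping is in (6). Expanding $\cC(\cI,\mu)=\sum_i(\cD^i(\cI),\mu-i)$ by the iterated sum formula gives the ideal $\sum_i(\cD^i(\cI))^{e_i}$, where each exponent $e_i$ is the product of the \emph{other} multiplicities $(\mu-j)$ and hence satisfies $e_i\le(\mu-1)!$. Applying (2) and (3), the $i$-th summand has at most $(n{+}1)^{i\,e_i}l(\cI)^{e_i}$ generators; using $i\,e_i\le(\mu-1)\cdot(\mu-1)!\le\mu!$ and summing $\mu$ such contributions gives the target bound $\mu(n{+}1)^{\mu!}l(\cI)^{\mu!}$. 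The only real obstacle anywhere in the proof is verifying that the exponents produced by iterating the (non-associative) sum operation really are bounded by $(\mu-1)!$; once that combinatorial observation is in place, the rest is arithmetic bounded by the generator-counting rules from (1)--(3).
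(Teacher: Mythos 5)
Your proof is correct and coincides with the paper's intent: the paper's own proof of this lemma is simply ``Immediate from the definitions,'' and your generator-by-generator bookkeeping is exactly the expansion of that, with each of the stated bounds $(n+1)^i l$, $l^i$, $L_{\cO}$, $L_{\cH}$, $L_{\cC}$ recovered as you compute them. The one mildly delicate point you flag --- that the exponents $e_i$ produced by the iterated (non-associative) sum in $\cC(\cI,\mu)$ satisfy $e_i\le(\mu-1)!$, so that $i\,e_i\le\mu!$ --- does check out with the paper's convention that the sum runs over the terms $(\cD^i\cI,\mu-i)$ with positive mark, which is precisely what makes the stated bound $\mu(n+1)^{\mu!}l(\cI)^{\mu!}$ come out.
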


\begin{proof} Immediate from the definitions.
\end{proof}

 By the preceding lemma, we get
 \begin{align*}
 l(\cH(\cO(\cI))) &\leq  L_{\cH}(L_{\cO}(l(\cI),\mu),\mu\cdot M(d,n)\,,\\
 l(\cJ)=l(\cC(\cH(\cO(\cI)))) &\leq  L_\cC(L_{\cH}(L_{\cO}(l(\cI),\mu),\mu\cdot M(d,n)),\mu\cdot M(d,n))\,.
 \end{align*}
Thus we get:

\begin{corollary}\label{number} $l(\cJ)\leq F(d,n,\mu)$, where $$F(d,n,\mu):=L_\cC(L_{\cH}(L_{\cO}(l(\cI),\mu),\mu\cdot M(d,n))),\mu\cdot M(d,n)).$$
\end{corollary}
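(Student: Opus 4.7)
The corollary is essentially a bookkeeping composition of the three bounds in the preceding lemma, applied in the same order in which the marked ideal $\cJ = \cC(\cH(\cO(\cI)))$ is built up. The plan is to propagate the generator count through the three operations $\cO$, $\cH$, $\cC$, using Lemma \ref{multiplicity} at each step to control the multiplicity parameter that appears as the second argument in $L_\cH$ and $L_\cC$.

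First, I apply item (4) of the preceding lemma to the companion ideal: starting from $\cI$ with $l(\cI)$ generators, we obtain $l(\cO(\cI)) \leq L_\cO(l(\cI),\mu)$. Note that $\cO(\cI)$ is a sum of (at most) two marked ideals of maximal order, whose generating sets are products of at most $\mu$ of the generators of $\cI$ and $\cN(\cI)$ respectively, which is how the exponent $\mu$ enters. At this point Lemma \ref{multiplicity} tells us that $\mu(\cO(\cI)) \leq \mu \cdot \bar\mu \leq \mu \cdot M(d,n)$; this is the multiplicity I must feed into the homogenization and coefficient-ideal bounds.

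Next, I apply item (5) with first argument $L_\cO(l(\cI),\mu)$ and second argument $\mu \cdot M(d,n)$, yielding
\[
 l(\cH(\cO(\cI))) \;\leq\; L_\cH\bigl(L_\cO(l(\cI),\mu),\,\mu\cdot M(d,n)\bigr).
\]
Finally, since $\mu(\cH(\cO(\cI))) = \mu(\cO(\cI))$, the coefficient-ideal bound of item (6) with the same multiplicity parameter $\mu \cdot M(d,n)$ gives
\[
 l(\cJ) \;=\; l\bigl(\cC(\cH(\cO(\cI)))\bigr) \;\leq\; L_\cC\bigl(L_\cH(L_\cO(l(\cI),\mu),\,\mu\cdot M(d,n)),\,\mu\cdot M(d,n)\bigr),
\]
which is exactly $F(d,n,\mu)$.

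There is no real obstacle here; the only point that requires care is the use of Lemma \ref{multiplicity} to justify replacing the (a priori unknown) multiplicity of $\cO(\cI)$ by the a priori bound $\mu \cdot M(d,n)$, so that the second argument of $L_\cH$ and $L_\cC$ is expressible purely in terms of $d$, $n$, $\mu$. Since $L_\cH$ and $L_\cC$ are monotone increasing in both arguments, this substitution only increases the bound, and the stated inequality follows.
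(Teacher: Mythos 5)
Your proposal is correct and follows essentially the same route as the paper: the paper also simply composes items (4), (5), (6) of the preceding lemma, feeding in the multiplicity bound $\mu(\cO(\cI))\leq\mu\cdot\bar\mu\leq\mu\cdot M(d,n)$ from Lemma \ref{multiplicity} as the second argument of $L_\cH$ and $L_\cC$. Your extra remark about monotonicity of $L_\cH$, $L_\cC$ in justifying the substitution of the a priori bound is a small point the paper leaves implicit, but it does not constitute a different argument.
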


\begin{remark}
The algorithm of \cite{B-M5} does not involve the homogenization step and therefore
gives better estimates for the elementary functions introduced. In particular:
\begin{enumerate}
\item The degree of  generators of $\cJ$ is bounded by $B(d,n,\mu)$ (which improves the bound $A(d,n,\mu)$, cf. Lemma \ref{boundA}).
\item The number of  generators
$l(\cJ)$ can be bounded by $L_\cC (L_{\cO}(l(\cI),\mu),\mu\cdot M(d,n))$
(which improves the bound $F(d,n,\mu)$; cf. Corollary \ref{number}),
\end{enumerate}
However, the above improvements do not affect the overall Grzegorczyk complexity class 
$\cE^{m+3}$. (See Theorem \ref{main3}.)
\end{remark}

Summarizing:

\begin{lemma} \label{2a1} The effect  of passing from $\cI$ to $\cJ=\cC(\cH(\cO(\cI)))$ as in Step 2a/Step1 can be described by the function $$\Delta_{2a}(r,m,d,n,l,q,\mu):=(r,m,A(d,n,\mu),n,F(d,n,\mu,l),q, (\mu\cdot M(d,n))!).$$
\end{lemma}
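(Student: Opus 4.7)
The plan is to read off each of the seven components of the output tuple separately, since the passage from $\cI$ to $\cJ:=\cC(\cH(\cO(\cI)))$ is a purely algebraic construction on the ideal sheaf, not a blow-up. Four of the seven components are immediate: the blow-up index $r$ is unchanged because no morphism is applied; the ambient variety $X_\cT$ is untouched, so $m$ and the embedding dimension $n$ stay the same; and the covering data $\{U_{\alpha,\beta}\}$ is not refined, so $q$ is preserved. Thus only the degree bound $d'$, the generator count $l'$, and the multiplicity $\mu'$ require argument.

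For the multiplicity, the plan is to invoke Lemma \ref{multiplicity} directly. First, $\mu(\cN(\cI)) = \bar\mu \le M(d,n)$ by Lemma \ref{boundM}. Next, the companion ideal satisfies $\mu(\cO(\cI)) \le \mu\cdot\bar\mu$, since forming $O(\cI,\mu)$ takes the ``sum'' of two marked ideals with multiplicities $\bar\mu$ and $\mu - \bar\mu$ (or just $\bar\mu$), multiplying multiplicities. Homogenization does not alter the multiplicity, and finally passage to the coefficient ideal multiplies multiplicities across $i = 1,\ldots,\mu(\cO(\cI))$, giving a bound of $\mu(\cO(\cI))! \le (\mu\cdot M(d,n))!$. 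This matches the seventh component of $\Delta_{2a}$.

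For the degree bound, Lemma \ref{boundA} gives exactly $A(d,n,\mu)$ for generators of $\cJ$, obtained by combining Lemma \ref{degree1} (each derivation increases degree by at most $nd$) with the multiplicity bounds above. For the generator count, the plan is to chain the four parts of the previous lemma on $L_\cO$, $L_\cH$, $L_\cC$: one applies $L_\cO(\,\cdot\,,\mu)$ to $l(\cI)$, then $L_\cH(\,\cdot\,, \mu\cdot M(d,n))$ to the result, then $L_\cC(\,\cdot\,,\mu\cdot M(d,n))$, arriving at the function $F(d,n,\mu,l)$ of Corollary \ref{number}.

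No step is genuinely difficult: the argument is bookkeeping assembled from Lemmas \ref{boundM}, \ref{degree1}, \ref{multiplicity}, \ref{boundA}, and Corollary \ref{number}. The only subtlety worth flagging is that the multiplicity bound used as an argument to $L_\cH$ and $L_\cC$ must be $\mu\cdot M(d,n)$ (a bound on $\mu(\cO(\cI))$), not the smaller $\mu$ or $\bar\mu$; one has to be careful to feed each subroutine the worst-case multiplicity that it operates on so that the final bounds are uniformly valid over the possible values of $\bar\mu$.
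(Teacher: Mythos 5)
Your proof is correct and follows exactly the route the paper takes: Lemma \ref{2a1} is stated there as a summary (``Summarizing:'') of Lemmas \ref{boundM}, \ref{degree1}, \ref{multiplicity}, \ref{boundA} and Corollary \ref{number}, with the components $r,m,n,q$ unchanged because no blow-up or refinement of the cover occurs at this step. Your bookkeeping of the multiplicity through the sum, homogenization and coefficient-ideal operations, and your remark that $L_\cH$ and $L_\cC$ must be fed the bound $\mu\cdot M(d,n)$, match the paper's intended argument.
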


\subsubsection{A bound for the number of maximal contacts and the relevant neighborhoods}
We will construct maximal contacts along with  the open neighborhoods for which they are defined.
Each maximal contact $u\in \overline{\Der_{\CC^n,X}}^{\bar{\mu}-1}(\cN(\cI))$ that
we consider is of the form $u=D^{\bar{\mu}-1}(g_i)$, where
$D^{\bar{\mu}-1}=D_1^{a_1}\ldots D_n^{a_n}$ is a certain composition of ${\bar{\mu}-1} $   differential operators (\ref{*})
(i.e., the $D_i$ are of the form in  (\ref{*}), and $a_1+\ldots+a_n=\bar{\mu}-1$).
Consider all differential operators $\{D_r^{\bar{\mu}}\}_{r\in R}$,  which are certain compositions of 
${\bar{\mu}}$   differential operators    (\ref{*}), and take all the corresponding functions $f_{r,i}:=D_r^{\bar{\mu}}(g_i)$. On the open set $U_{r,i}=U\setminus V(f_{r,i})$, consider the maximal contact $u_{r,i}=D^{\bar{\mu}-1}(g_i)$, where
 $D^{\bar{\mu}-1}=D_1^{a_1}\ldots D_n^{a_n}$
  is a certain composition   of ${\bar{\mu}-1}
$   differential operators (\ref{*}) obtained from $D_r^{\bar{\mu}}=D_1^{b_1}\ldots D_n^{b_n}$ by replacing one of the positive $b_i$ with $b_i-1$ (i.e $a_i:=b_i-1$ for some $b_i>0$ and $a_j=b_j$ for $j\neq i$.)

\begin{lemma} The number of the maximal contacts $u_{r,i}\in \overline{\Der_{{\CC^n,X}}}^{\bar{\mu}-1}(\cN(\cI))$ and at the same time the number of neighborhoods $U_{i,r}\subset U$ can be bounded
by  $$ C(d,n,\mu)\cdot l(\cI),$$
where $l(\cI)$ is the number of generators of $\cI$, and
$$
  C(d,n,\mu):=  {{M(d,n)+n}\choose{n}}\,.
$$
\end{lemma}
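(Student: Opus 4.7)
The plan is to bound the total count as a product $|R| \cdot l(\cI)$, since each pair $(r,i)$ indexes at most one neighborhood $U_{r,i}$ and one associated maximal contact $u_{r,i}$. It then suffices to prove that $|R| \le \binom{M(d,n)+n}{n}$.

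To bound $|R|$, I would exploit the structure of the elementary derivations in (\ref{*}): each is, up to an optional polynomial prefactor $u_i$, one of the $n$ partial derivations $d_{u_j}'$ for $j=1,\ldots,n$. Consequently any composition $D_r^{\bar\mu}$ of $\bar\mu$ such operators differs from a pure monomial $(d_{u_1}')^{\alpha_1}\cdots (d_{u_n}')^{\alpha_n}$ with $|\alpha|=\bar\mu$ only by polynomial prefactors and lower-order commutator corrections. For the algorithm, only the leading symbol is relevant: at any point $x\in\supp(\cN(\cI))$ of multiplicity exactly $\bar\mu$, some pure partial derivative $\partial^{\alpha}/\partial u^{\alpha}$ of order $\bar\mu$ applied to some generator $g_i$ is nonzero at $x$, so one representative composition per multi-index $\alpha$ is enough. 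I would therefore take $R$ to consist of exactly one lift per multi-index $\alpha\in\ZZ^n_{\ge 0}$ with $|\alpha|=\bar\mu$.

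The combinatorial estimate is then standard: multi-indices $\alpha\in\ZZ^n_{\ge 0}$ with $|\alpha|\le M$ number $\binom{M+n}{n}$ (i.e., the number of monomials of degree $\le M$ in $n$ variables). Applying this with $M=M(d,n)\ge \bar\mu$ from Lemma~\ref{boundM} gives $|R|\le \binom{M(d,n)+n}{n}=C(d,n,\mu)$, and multiplying by $l(\cI)$ generators of $\cI$ yields the claim.

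The main obstacle, more bookkeeping than substance, is the passage from the naive count of \emph{all} ordered compositions of $\bar\mu$ elementary operators (which is exponential in $\bar\mu$, of order $((n-m)^2+m)^{\bar\mu}$) to the polynomial-in-$M(d,n)$ count via multi-indices. Justifying the reduction requires noting that the sole use of $D_r^{\bar\mu}$ in the algorithm is to witness non-vanishing of $f_{r,i}=D_r^{\bar\mu}(g_i)$ on $U_{r,i}$, so compositions with the same leading symbol are interchangeable for this purpose; the $n$ independent differentiation directions underlying (\ref{*}) are exactly what makes $\binom{M(d,n)+n}{n}$ appear.
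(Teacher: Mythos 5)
Your proposal is correct and follows essentially the same route as the paper: the paper's set $R$ is already indexed by normal-form compositions $D_1^{b_1}\cdots D_n^{b_n}$ with $b_1+\cdots+b_n=\bar\mu$, so the count is the number of such multi-indices, at most ${\bar\mu+n \choose n}\le {M(d,n)+n\choose n}$ by Lemma~\ref{boundM}, times the $l(\cI)$ generators. Your extra discussion of why arbitrary compositions can be replaced by one representative per multi-index (leading symbol plus lower-order corrections, which vanish on the support) just makes explicit what the paper leaves implicit in its notation.
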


\begin{proof}  The number of the maximal contacts is bounded by ${ { {\bar{\mu}}+n} \choose{n}}\cdot l(\cI) \, \leq \, C(d,n,\mu)\cdot l(\cI), where
  C(d,n,\mu):=  {{M(d,n)+n}\choose{n}}\,.$
\end{proof}


Summarizing:

\begin{lemma} \label{1ab}
The effect  of passing from $\cI$ to $\cJ=\cC(\cH(\cO(\cI)))$ and then to $\cJ_{|H_s}$ in Step 1a or to  $\cJ_{|V(u)}$ as in the Step 1b can be described by the function $$\Delta_{1}(r,m,d,n,l,q,\mu):=
(r,m-1,A(d,n,\mu),n,F(d,n,\mu,l),q\cdot l \cdot C(d,n,\mu), (\mu\cdot M(d,n))!)\,.$$
\end{lemma}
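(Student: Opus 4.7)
The plan is to factor the passage $\cI \rightsquigarrow \cJ_{|S}$ (where $S$ is either $H^s$ in Step~1a or $V(u)$ in Step~1b) into two conceptually separate moves: first form the coefficient/homogenized/companion ideal $\cJ=\cC(\cH(\cO(\cI)))$ on the ambient space, and then restrict it to the appropriate smooth subvariety $S$. The first move is exactly what Lemma~\ref{2a1} describes, so it sends the data vector $(r,m,d,n,l,q,\mu)$ to $\Delta_{2a}(r,m,d,n,l,q,\mu)=(r,m,A(d,n,\mu),n,F(d,n,\mu,l),q,(\mu\cdot M(d,n))!)$. What remains is to show that the restriction to $S$ preserves the degree, embedding dimension, generator count and multiplicity data already obtained, decreases the dimension of the relevant variety by one, and increases the number of affine neighborhoods by a controlled factor.

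For the dimension coordinate, the hypersurface $V(u)$ of maximal contact, and the irreducible components of an intersection $H^s_\alpha$ of $s\ge 1$ exceptional divisors, are smooth subvarieties of codimension at least one, so $m$ drops to $m-1$ (only an upper bound is needed, so we may replace any larger drop by the worst case $m-1$). Restriction to a closed subvariety does not introduce new polynomial generators nor raise their degrees, so the degree bound stays at $A(d,n,\mu)$, the generator count stays bounded by $F(d,n,\mu,l)$, and the prescribed multiplicity $\mu(\cJ)\le(\mu\cdot M(d,n))!$ from Lemma~\ref{multiplicity} is unchanged. The ambient embedding dimension $n$ is also preserved because we keep the same $\CC^{n_\alpha}$ and simply cut out $S$ by additional equations (which can be recorded with polynomials already available, so no new polynomial input is required to describe $S$).

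The only nontrivial bookkeeping is for $q$. In Step~1a we cover $\supp\,\cJ\cap H^s$ by the previously existing neighborhoods intersected with the components $H^s_\alpha$, so the neighborhood count is unchanged or absorbed into the subsequent step. In Step~1b we must choose, on each previous neighborhood, a tangent direction $u\in T(\cJ)$; as explained in the paragraph preceding Lemma~\ref{1ab}, the constructive choice enumerates all differential operators $D^{\bar\mu}$ of order $\bar\mu$ and the corresponding functions $f_{r,i}=D_r^{\bar\mu}(g_i)$, producing open sets $U_{r,i}=U\setminus V(f_{r,i})$ on each of which a maximal contact $u_{r,i}$ is defined. The number of such pairs is bounded by $l\cdot C(d,n,\mu)$ with $C(d,n,\mu)=\binom{M(d,n)+n}{n}$. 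Multiplying by the existing neighborhood count gives the new bound $q\cdot l\cdot C(d,n,\mu)$, which dominates the (smaller) bound required for Step~1a.

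Combining these observations entry by entry yields the displayed formula for $\Delta_1$. The only mildly delicate point is the neighborhood count in Step~1b, where one must verify that the enumeration of tangent directions of the form $D^{\bar\mu-1}(g_i)$ described just before the statement indeed produces an open cover of $\supp\,\cJ$ on which maximal contact is defined; this is precisely the purpose of selecting the index from the positive exponents of $D^{\bar\mu}=D_1^{b_1}\cdots D_n^{b_n}$ with $f_{r,i}\ne0$, and of invoking the binomial count $\binom{M(d,n)+n}{n}$ to bound the number of such operators uniformly in terms of $M(d,n)$.
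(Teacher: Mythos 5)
Your proposal is correct and takes essentially the same route as the paper, which offers no separate argument for this lemma but presents it as a summary of Lemma \ref{2a1} (the passage $\cI\rightsquigarrow\cJ$ via $\Delta_{2a}$), the count $C(d,n,\mu)\cdot l(\cI)$ of maximal contacts and their defining neighborhoods, and the drop of dimension by one under restriction with the remaining data unchanged. The only place you are looser than the paper is the degree bookkeeping: the equations cutting out $V(u)$ (and the $f_{r,i}$ defining the new charts) are genuinely new polynomials, but their degrees are bounded by $B(d,n,\mu)\le A(d,n,\mu)$ (Lemma \ref{boundB}), which is precisely the paper's remark immediately after the lemma, so the stated bound $A(d,n,\mu)$ is unaffected.
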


Note that the restriction to the maximal contact does not affect the degree since the function $B(d,n,\mu)$ measuring the degree of maximal contact is smaller than $A(d,n,\mu)$.

\begin{remark} The particular form of the bounds obtained
does not strongly influence Theorem ~\ref{main3}; we need only that the functions
 belong to the class $\cE^3$. (See the beginning of the next section.)
\end{remark}

\section{Complexity bound of the resolution algorithm in terms of Grzegorczyk's classes}

\subsection{Language of Grzegorczyk's classes}
The complexity estimate of the desingularization algorithm which we provide in this section is given in terms of 
Grzegorczyk's classes $\cE^l,\, l\ge 0$ of
primitive-recursive functions \cite{Gr}, \cite{Wagner}.
To make the paper self-contained, we provide a definition of $\cE^l$ by induction
on $l$
(informally speaking, $\cE^l$ consists of integer functions $\ZZ^s\to \ZZ^t$ whose
construction requires $l$ nested primitive recursions).

For the base definition, $\cE^0$ contains constant functions $x_k\mapsto c$,
functions $x_k\mapsto x_k+c$ and
projections $(x_1,\dots,x_n)\mapsto x_k$ for any variables $x_1,\dots,x_n$.

The class $\cE^1$ contains linear functions $x_k\mapsto c\cdot x_k$ and $(x_{k_1},
x_{k_2})\mapsto x_{k_1}+x_{k_2}$.

The class $\cE^2$ contains all polynomials with integer coefficients

Let $l\ge 2$. For the inductive step of the definition, assume that functions $G(x_1,\dots,x_n),
H(x_1,\dots,x_n,y,z)\in \cE^l$. Then the function $F(x_1,\dots,x_n,y)$ defined by the
primitive recursion,
  \begin{eqnarray}\label{grz1}
F(x_1,\dots,x_n,0)=G(x_1,\dots,x_n), \end{eqnarray}
 \begin{eqnarray}\label{grz2}
F(x_1,\dots,x_n,y+1)=H(x_1,\dots,x_n,y,F(x_1,\dots,x_n,y)), \end{eqnarray}
belongs to $\cE^{l+1}$.

To complete the definition of $\cE^l,\, l\ge 0$,
take the closure with respect to composition and the following {\it limited primitive recursion}:
\begin{itemize}
\item[] Let $G(x_1,\dots,x_n), H(x_1,\dots,x_n,y,z), Q(x_1,\dots,x_n,y) \in \cE^l$. Then the function $F(x_1,\dots,x_n,y)$ defined by (\ref{grz1}),(\ref{grz2}) also belongs to  $ \cE^l$, {\it provided that}
$F(x_1,\dots,x_n,y)\leq Q(x_1,\dots,x_n,y)$.
\end{itemize}
Clearly, $\cE^{l+1}\supset \cE^l$.

Observe that 
$\cE^3$
contains all  towers of exponential functions and
$\cE^4$
contains all tetration  functions \cite{Gr}, \cite{Wagner}.

The
union $\cup_{l<\infty}
\cE^l$ coincides with the set of all primitive-recursive functions.

\subsection{Resolution algorithm as a graph}
 It is instructive to
represent the resolution algorithm in the form of a tree $T$ as in the following
Figure.

\begin{figure}[htbp]
\centerline{\epsfig{file=tree2.eps,width=.7\textwidth}}
\caption{\label{tree2}}
      \end{figure}

Each node $a$ of $T$ corresponds to an intermediate object $T_a=(\{X_{\alpha,\beta},\cI_{\alpha,\beta},E_{\alpha,\beta},U_{\alpha,\beta}\}_{\alpha,\beta},
n,\mu)$. Each node $a$ is labeled either by 1 or 2 depending on whether it corresponds to
step 1 or 2 in the description of the algorithm (see the previous sections). An edge
from a node labeled by 1 leads to its child node labeled by 2 and the edge is labeled in its turn
either by 2a or by 2b depending on the step to which it corresponds. Similarly, an
edge from a node labeled by 2 leads to its child node labeled by 1 and is labeled in
turn either by 1a or by 1b. In the Figure a child node is always located to the right
from a node.

The algorithm yields $T$ by recursion starting with its root. Assume that $a$ and $T_a$
are already constructed. The next task of the algorithm is to resolve the object $T_a$. To
this end
the algorithm first constructs the child nodes $a_1,\dots,a_{\overline t}$ of $a$
according to the algorithm. The order of producing $a_1,\dots,a_{\overline t}$ goes
from up to down in the Figure. The algorithm resolves the objects
$T_{a_1},\dots,T_{a_{t-1}}$ by recursion on $0\le t\le \overline t$ and in the process
modifies $T_a:=T_a(0)$, obtaining the current object $T_a(t-1)$. Then the algorithm yields
$a_t,\, T_{a_t}$, resolves $T_{a_t}$ and
collects all the blow ups produced while resolving $T_{a_t}$ and applies them (with the
same centers) to
the current object $T_a(t-1)$; the resulting object we denote by $T_a(t)$. This
allows the algorithm to yield a child node $a_{t+1}$ and $T_{a_{t+1}}$ following the
description from the previous sections.

For the leaves of $T$ there are two possibilities: either a leaf is labeled by 2 or a certain
node $a$
labeled by 2 could have a single edge (the lowest in the Figure among the edges originating
at $a$) labeled by 2b which leads to a child node of $a$ being a leaf corresponding to the
monomial case (labeled by $M$). Note also
that if $a$ is labeled by 1 then the top few edges originating at $a$ are labeled by 1a and the
remaining bottom ones are labeled by 1b (in the order from up to down in the Figure).

Observe that the dimension of the varieties $X_{\alpha,\beta}$ corresponding to $a$ drops while passing
to any of its child nodes when $a$ is labeled by 1, and the dimension does not increase
when $a$ is labeled by 2. Therefore, the depth of $T$ does not exceed $2m$.

\subsection{Main recursive functions}
Now we proceed to the bounds of some recursive functions related to the ingredients of $\cT$. Set  $$\gamma:=(r,m,d,n,l,q,\mu) \in {\bf Z}^7_{\geq 0}\,. $$
Let $\cT_{*}$ be the canonical resolution of $\cT$.
For simpilicity of notation, we introduce the following function defined on ${\bf Z}^7_{\geq 0}$:
$$\Gamma^{(m)}(\gamma):=(r+{R^{(m)}}(\gamma),m,{D^{(m)}}(\gamma), {N^{(m)}}({\gamma}), {L^{(m)}}(\gamma), {Q^{(m)}}(\gamma),\mu)\,,$$
where the subscript $r$ can be interpreted as the subscript in the resolution of $\cT$, and
\begin{enumerate}
\item ${R^{(m)}}(\gamma)$ is the number of  blow-ups needed to resolve the initial marked ideal with data bounded by $(m,d,n,l,q,\mu)$;
\item  ${D^{(m)}}(\gamma)$ is a function bounding the maximum of the degrees of
all the polynomials which represent $\cT_{*}$ and all objects constructed along the way (in particular, the centers);
\item ${N^{(m)}}(\gamma)$ is a bound for the dimensions of the ambient affine spaces  constructed along the way;
\item ${L^{(m)}}(\gamma)$ is a bound for the number of polynomials appearing in the
description of a single  neighborhood $U_{\alpha,\beta}$ on resolving
$T$;
\item ${Q^{(m)}}(\gamma)$ is the number of neighborhoods in all the auxiliary objects (in particular, the centers)
appearing on resolving
$T$.
\end{enumerate}

\begin{remark} The functions ${R^{(m)}}(\gamma)$, ${D^{(m)}}(\gamma)$, ${N^{(m)}}(\gamma)$ do not depend on $l,q$.
\end{remark}

\subsubsection{Algorithm revisited}

Let  $(\cI,\mu)$ be  a marked ideal on an $m$-dimensional smooth variety $X$. Consider the corresponding object $$\cT^{(m)}=(\{X_{\alpha,\beta},\cI_{\alpha,\beta},E_{\alpha,\beta},U_{\alpha,\beta},\CC^n_\alpha \mid \alpha \in A, \beta \in B\},\mu)$$ with the initial data $$\gamma:=(0,m,d,n,l,q,\mu).$$
Our next goal is two-fold. We will give recursive formulas for $\Gamma^{(m)},{R^{(m)}},
{D^{(m)}},{N^{(m)}},{L^{(m)}},{Q^{(m)}}$ and   prove by induction on $m$ these functions  belong to
Grzegorczyk's class $\cE^{m+3}$. In the base of the induction (i.e., for $m=0$), the functions
$${R^{(0)}}=1,\,{D^{(0)}}=O(dn),\,{N^{(0)}}\le 2n,\,
{L^{(0)}}\le l\cdot (dn)^{O(n)},\,{Q^{(0)}}\le nq$$ belong to the class $\cE^3$,
as does $\Gamma^{(0)}$.

Now we proceed to the inductive step.  Assume that $\Gamma^{(m-1)},{R^{(m-1)}},
{D^{(m-1)}},{N^{(m-1)}},{L^{(m-1)}},{Q^{(m-1)}}$  belong to
Grzegorczyk's class $\cE^{m+2}$, where $m\geq 1$.

If $\cI=0$, then the resolution is done by the single blow-up at the center $C=X$ and the object $\cT^{(m)}$ is transformed into an object $\cT^{(m)}_1$ with $X_1=\emptyset $ and data bounded by $Bl(\gamma)\in \cE^3$ (see Lemma \ref{boundG}).

If   $\cI\neq 0$, then the resolution algorithm can be represented by the following scheme.
\medskip

{\bf Step 2}.  Resolve $(\cI,\mu)$ on the $m$-dimensional smooth variety $X$. Consider the corresponding object $\cT^{(m)}$ with initial data $\gamma:=(0,m,d,n,l,q,\mu).$
Let $\bar{\mu}$ denote the maximal order of $\cN(\cI)$ on $X$. We have the following estimate for $\bar{\mu}$:  $$\bar{\mu} \leq M(d,n) \in \cE^3$$
(cf Lemma \ref{boundM}).
\medskip

{\bf Step 2a.} In this Step we are going to decrease
 the maximal order of the nonmonomial part $\cN(\cI)$ by resolving the companion
ideal $O(\cI,\mu)$. In fact we perform an additional modification of $\cO(\cI)$ and construct the ideal $\cJ:=\cC(\cH(\cO(\cI))).$
This corresponds to  the new object
$\cT_{1}^{(m)}$ with the initial data $$\gamma^{(2a)}:=\Delta_{2a}(\gamma) \in \cE^3,$$
(see Lemma \ref{2a1}).

The object $\cT_{1}^{(m)}$ will be then resolved  and its resolution will cause the maximal order to decrease. The resolution $\cT_{1}^{(m)}$ is done by performing Step 1.
\medskip

{\bf Step 1}. In this step we resolve $\cJ$, i.e., $\cT_{1}^{(m)}$.
The Step splits into two Steps (1a) and (1b).
\medskip

{\bf Step 1a}.  Move apart all unions of the intersections $H^s_\alpha\subset \CC^n_\alpha$
of $s$ divisors in $E$, where $s$ is the maximal number of divisors in $E$ through
points in $\supp\,\cJ = \supp(\cJ,\mu(\cJ))$.
For any $\alpha$,  resolve all ${\cJ}_{|H^s_\alpha}$.  We construct a new object  $$\cT_2^{(s)}:=(\{H^s_{\alpha,\beta},\cJ_{\alpha,\beta},E_{\alpha,\beta},U_{\alpha,\beta},\CC^n_\alpha \mid \alpha \in A, \beta \in B\},\mu(\cJ)),$$
with initial data bounded by $$\gamma^{(1a)}:=\Delta_1(\gamma) \in \cE^3 $$
with $s\leq m-1$ (see Lemma \ref{1ab}).

By the inductive assumption,  the resolution  of $\cT_2^{(s)}$, i.e., the sequence  
$\cT_{2*}^{(s)}$ of the induced intermediate objects determined by the blow-ups,  requires at most ${R^{(m-1)}}(\gamma^{(1)})$ blow ups. The maximal degree of the polynomials of the centers and the objects $\cT_{2*}^{(s)}$ describing the resolution does not exceed ${D^{(m-1)}}(\gamma^{(1)})$. The dimension $n$
of the objects  does not exceed ${N^{(m-1)}}(\gamma^{(1)})$.

Note that the resolution of $\cT^{(s)}_{2*}$  determines a multiple blow-up  $\cT^{(m)}_{1*}$ of  $\cT^{(m)}_{1}$ consisting of ${R^{(m-1)}}(\gamma^{(1)})$ blow-ups. We have a direct correspondence between objects $\cT^{(m)}_{1*}$, and $\cT^{(s)}_{2*}$. The bound $$\Gamma^{(m-1)}(\Delta_1(\gamma)) \in \cE^{m+2},$$
for the data for the resolution $\cT^{(s)}_{2*}$,
given by the induction, remains valid  for the data for $\cT^{(m)}_{1*}$ as we use the same centers, the same ambient affine spaces, etc., for these multiple blow-ups. Only the strict transforms of the current $X$ are different, and this does not affect the bounds for the data. We have additional  equations to describe the current $X$ in $\cT^{(s)}_{2*}$, as compared to those in $\cT^{(m)}_{1*}$.

{Step 1a} is performed at most $s\leq m$ times. Introduce the auxillary unknown $t=0,1,\ldots,m,$  and the function $\Gamma^{(m)}_{1a}(\gamma,t)$ which measures the possible effect after performing Step 1a $t$ times:
\begin{align*}\Gamma^{(m)}_{1a}(\gamma,0)&:=\Delta_{1}(\gamma) \in \cE^3\,, \\
\Gamma^{(m)}_{1a}(\gamma,t+1)&:=\Gamma^{(m-1)}(\Gamma^{(m)}_{1a}(\gamma,t)).
\end{align*}

Since the Step 1a is performed at most $m$ times, its final effect after completing Step 1a and passing to Step 1b is measured by the function
$${\Gamma^{(m)}_{1b}}(\gamma):=\Gamma^{(m)}_{1a}(\gamma,m)\,.$$
Note that for any fixed value of $t=t_0$ (in particular, for $t=m$) the functions
$\Gamma^{(m)}_{1a}(\gamma,t_0),$ belong to the class $\cE^{m+2}$ due to the
inductive hypothesis and Lemma~\ref{boundG}, Corollary~\ref{number}. Therefore,
${\Gamma^{(m)}_{1b}}$ belongs to the class $\cE^{m+2}$.
(We use here the property that Grzegorczyk classes are closed under composition.)
After performing Step 1a, we
have moved apart all strict transforms of $E$ and $\supp\,\cJ = \supp(\cJ,\mu(\cJ))$.
\medskip

{\bf Step 1b}. If  the strict transforms of $E$ do not intersect  $\supp\,\cJ$, we resolve 
$\cJ$, i.e., the object  $\cT^{(m)}_{1}$. This is achieved
by resolving
$\cJ_{|V(u)}$ (by induction), where $V(u)$ is a hypersurface of
maximal contact. After completing Step 1a, the bound $\gamma^{(1a)}$ is transformed to $$
{\gamma^{(1b)}}:={\Gamma^{(m)}_{1b}}(\gamma)=({r^{(1b)}},m,{d^{({1b})}}, {n^{(1b)}},{l^{(1b)}}, {q^{(1b)}}, (\mu\cdot(M(d,n)))!)$$
(cf. Lemma~\ref{multiplicity}).

Passing from $\cJ$ to  ${\cJ}_{|V(u)}$, we adjoin the equations of maximal contact as well as create new neighborhoods. This operation has been reflected in the construction of $\Delta_1(\gamma)$.
By the construction of ${\Gamma^{(m)}_{1b}}(\gamma)$ and $\Delta_1(\gamma)$, the degree of the maximal contact does not exceed ${d^{({1b})}}$, while the number of neighborhoods does not exceed ${q^{(1b)}}$. In other words ${\Gamma^{(m)}_{1b}}(\gamma)$ bounds the initial data for ${\cJ}_{|V(u)}$.

The resolution process for ${\cJ}_{|V(u)}$ leads eventually to resolution of the object 
$\cT_1^{(m)}$ corresponding to ${\cJ}$, with the data bounded by
$${\Gamma^{(m)}_{1}}(\gamma):=\Gamma^{(m-1)}({\Gamma^{(m)}_{1b}}(\gamma))= \Gamma^{(m)}_{1a}(\gamma,m+1)= ({r^{(1)}},m,{d^{({1})}}, {n^{(1)}},{l^{(1)}}, {q^{(1)}},(\mu\cdot(M(d,n)))!),$$
for the relevant ${r^{(1)}},{d^{({1})}}, {n^{(1)}},{l^{(1)}}, {q^{(1)}}$.
Hence the function ${\Gamma^{(m)}_{1}}$ belongs to class $\cE^{m+2}$ by the
inductive hypothesis and Lemma~\ref{boundG}, Corollary~\ref{number}.
This completes Step 1.

The object $\cT^{(m)}$ corresponding to $\cI$ with initial data $\gamma$ is transformed to the new object with the data bounded by
$${\Gamma^{(m)}_{2a}}(\gamma)
:=({r^{(1)}},m,{d^{({1})}}, {n^{(1)}},{l^{(1)}}, {q^{(1)}},\bar{\mu})$$
with smaller  $\bar{\mu}$ ---  the maximal order of the new
$\cN(\cI)$. (Note that $\bar{\mu}< M(d,n)$).

This completes Step 2a.
This Step 2a is then repeated at most $M(d,n)$ times until the maximal order drops to zero 
when we arrived at the monomial case.
The final effect of Step 2a is measured then by the recursive function
\begin{align*}
\Gamma^{(m)}_{2a}(\gamma,0)&=\gamma\,,\\
\Gamma^{(m)}_{2a}(\gamma,t+1)&={\Gamma^{(m)}_{2a}}(\Gamma^{(m)}_{2a}(\gamma,t)).
\end{align*}
Therefore, the function $\Gamma^{(m)}_{2a}$ belongs to class $\cE^{m+3}$, by the definition
of Grzegorczyk classes (see (\ref{grz1}),(\ref{grz2})),  and by Lemmas~\ref{boundG},~\ref{boundM}.

Putting $t=M(d,n)$ gives the final effect after completing all necessary Steps 2a and subsequently  passing to
Step 2b:
$${\Gamma^{(m)}_{2b}}(\gamma):=\Gamma^{(m)}_{2a}(\gamma,M(d,n));$$
thereby, the function ${\Gamma^{(m)}_{2b}}$ belongs to class $\cE^{m+3}$ as well.

The procedure eventually reduces $(\cI,\mu)$ to the monomial marked
ideal $\cI=\cM(\cI)$.
\medskip

{\bf Step 2b}. Resolve the monomial marked ideal $\cI=\cM(\cI)$.
The marked ideal corresponds to the object  $\cT^{(m)}$ with data $$({r^{(2b)}},m,{d^{({2b})}}, {n^{(2b)}},{l^{(2b)}}, {q^{(2b)}},\mu)=:{\Gamma^{(m)}_{2b}}(\gamma).$$
The resolution of $\cI=(x^\alpha)$ consists of blow-ups each of which decreases the multiplicity $|x^\alpha| \leq d^{({2b})}$. The resolution of $\cI$ requires at most $d^{({2b})}$ blow-ups. Thus the final solution data can be bounded by the function
$$\Gamma^{(m)}(\gamma):=\overline{Bl}( {\Gamma^{(m)}_{2b}}(\gamma),{d^{(2b)}}) \in \cE^{m+3}\,.$$

We summarize the bounds achieved in the following Corollary (recall that the notation can be
found in subsection \ref{4.1.1}).

\begin{corollary}\label{total}
When resolving a marked ideal $(X,\cI,E,\mu)$ on $X\subset \CC^n$ by the Hironaka algorithm,
the degree $d$, the number $l$ of the polynomials occurring, the embedding
dimension $n$, the
number $r$ of the blow-ups and the number $q$ of the affine
neighborhoods satisfy, for fixed $m=\dim\,X$,  the recursive equalities above, and are majorized by a function
$$(r,m,d,n,l,q,\mu):= {\Gamma^{(m)}}(0,m,d_0,n_0,l_0,q_0,\mu)\in \cE^{m+3},$$
for the initial values $d=d_0$,  $n=n_0$,  $l=l_0$,  $q=q_0$.
\end{corollary}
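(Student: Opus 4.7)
The plan is to prove the bound by induction on $m = \dim X$, following the recursive structure of the algorithm of Section~\ref{Resolution algorithm}. I would track the data vector $\gamma = (r,m,d,n,l,q,\mu)$ through each operation of the algorithm, verify that every elementary building block induces a transformation lying in $\cE^3$, and show that each drop in dimension costs exactly one Grzegorczyk level, so that $m+3$ nested recursions suffice.

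For the base case $m=0$, the support is either empty (no blow-up needed) or all of $X$ (a single blow-up suffices), so $\Gamma^{(0)}$ is a polynomial expression involving $Bl$ and lies in $\cE^3$. For the inductive step, assume $\Gamma^{(m-1)} \in \cE^{m+2}$. The preparatory inputs are already in $\cE^3$: the single-blow-up effect $Bl$ (Lemma~\ref{boundG}), the multiplicity bound $M(d,n) = d^{O(n^2)}$ (Lemma~\ref{boundM}), the companion/homogenize/coefficient transition $\Delta_{2a}$ (Lemma~\ref{2a1}), and the restriction-to-hypersurface transition $\Delta_1$ (Lemma~\ref{1ab}).

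With these in hand, I would trace the algorithm from the inside out. Step 1a iterates $\Gamma^{(m-1)}$ (resolutions on the lower-dimensional strata $H^s_\alpha$) at most $m$ times; the limited primitive recursion
\begin{align*}
\Gamma^{(m)}_{1a}(\gamma,0) &= \Delta_1(\gamma),\\
\Gamma^{(m)}_{1a}(\gamma,t{+}1) &= \Gamma^{(m-1)}(\Gamma^{(m)}_{1a}(\gamma,t))
\end{align*}
stays in $\cE^{m+2}$ for each fixed $t \le m$ by closure under composition. Step 1b composes once more with $\Gamma^{(m-1)}$, applied to the restriction to a maximal-contact hypersurface $V(u)$, giving $\Gamma^{(m)}_1 \in \cE^{m+2}$. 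Step 2a then iterates $\Gamma^{(m)}_{2a}$ up to $M(d,n)$ times, corresponding to the strictly decreasing order $\ord_{\cN(\cI)}$ of the nonmonomial part. This is the step that raises the class: a primitive recursion whose depth is bounded by the $\cE^3$-function $M(d,n)$ yields $\Gamma^{(m)}_{2b} \in \cE^{m+3}$. Finally, Step 2b resolves the monomial phase by at most $d^{(2b)}$ further applications of $Bl$, so $\Gamma^{(m)} := \overline{Bl}(\Gamma^{(m)}_{2b}(\gamma), d^{(2b)}) \in \cE^{m+3}$.

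The main obstacle is bookkeeping at the Grzegorczyk level. I need to check carefully that every iteration count occurring in the algorithm is itself dominated by a function already in the current class, so that each recursion is a genuine limited primitive recursion in the sense of (\ref{grz1})--(\ref{grz2}) and raises the class by at most one. The delicate point is Step 2a: the number of outer iterations is bounded by $M(d,n)$, and this bound must be produced once and for all from the initial data rather than growing with the iteration, since otherwise we would lose control of the class. Once this is verified, the closure of $\cE^l$ under composition together with the one-level jump at Step 2a yields the final estimate $\Gamma^{(m)} \in \cE^{m+3}$.
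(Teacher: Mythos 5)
Your proposal is correct and follows essentially the same route as the paper: induction on $m$ with base case $\Gamma^{(0)}\in\cE^3$, the $\cE^3$ auxiliary transformations $Bl$, $M(d,n)$, $\Delta_{2a}$, $\Delta_1$, the Step 1a/1b recursions and compositions staying in $\cE^{m+2}$, the Step 2a iteration of depth at most $M(d,n)$ providing the single jump to $\cE^{m+3}$, and the final monomial phase handled by $\overline{Bl}$ with at most $d^{(2b)}$ blow-ups. Your flagged "delicate point" about the iteration count in Step 2a is exactly how the paper handles it: the bound $M(d,n)$ is computed from the initial data and the recursion is then closed off by substituting $t=M(d,n)$, using closure under composition.
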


\subsection{Complexity of the algorithm}

The principal complexity result of the paper is the following assertion.

\begin{theorem}\label{main2}
When resolving a marked ideal $(X,\cI,E,\mu)$ on $X\subset \CC^n$ by the Hironaka algorithm, its complexity can be bounded, for fixed $m=\dim\,X$,  by $$b^{O(1)}\cdot \cF^{(m)}(d_0,n_0,l_0,q_0, \mu),$$ for
a certain function $\cF^{(m)}(d_0,n_0,l_0,q_0,\mu)\in \cE^{m+3}$.
\end{theorem}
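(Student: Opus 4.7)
The plan is to bootstrap Theorem~\ref{main2} from Corollary~\ref{total} by tracking not only the combinatorial size parameters $(r,m,d,n,l,q,\mu)$ but also the bit-size $b$ of the integer coefficients, and then checking that every elementary subroutine invoked by the algorithm has a bit-complexity polynomial in its input size. Since Corollary~\ref{total} already guarantees that the $(d,n,l,q,\mu)$ parameters, and the total number $r$ of blow-ups, are majorized by a function in $\cE^{m+3}$, it will remain only to observe that the cost of each single step, measured in bit-operations, grows no worse than a polynomial in those parameters and in $b$.

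First I would enumerate the elementary subroutines that appear in the algorithm: (i) computing ideals of derivatives $\cD^i(\cI)$, which is just differentiation of given polynomials; (ii) forming sums, products and powers of ideals in Step~2a (companion, homogenization, coefficient ideals); (iii) restricting polynomials to smooth subvarieties of maximal contact or to intersections of exceptional divisors; (iv) applying the effective Nullstellensatz / linear algebra computation invoked in Section~4.2 to express generators of $\cI$ in the form $\sum h_{\bar a}\bar u^{\bar a}+\sum h_{ij} u_j$; (v) performing a single blow-up chart-by-chart (substitution of rational monomial change of variables and division by the exceptional coordinate); (vi) resolving the monomial case, which is purely combinatorial manipulation of exponent vectors. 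For each of (i)--(vi) the cost is bounded by a polynomial in the input degree $d$, the number $l$ of generators, the ambient dimension $n$, the number $q$ of charts, and the coefficient bit-size $b$; the only nontrivial input here is the Nullstellensatz step, for which the cited references give a bit-complexity of $(dl\mu b)^{d^{O(n)}}$, still a function of the same parameters whose dependence on $b$ is polynomial.

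Next I would track how the bit-size of the integer coefficients evolves along the algorithm. After one blow-up the new coefficients are obtained by algebraic operations of polynomial blow-up cost on the previous ones, so the bit-size grows by at most a factor bounded by a function of $(d,n,l,\mu)$. Iterating along an $\cE^{m+3}$-bounded number of steps, the bit-size at any node of the tree~$T$ (from the previous figure) is bounded by $b\cdot \widetilde{\cF}^{(m)}(d_0,n_0,l_0,q_0,\mu)$ for some $\widetilde{\cF}^{(m)}\in\cE^{m+3}$; this uses the closure of Grzegorczyk classes under composition and limited recursion, exactly as in the proof of Corollary~\ref{total}. Consequently the cost of any individual elementary operation inside the algorithm is bounded by $b^{O(1)}$ times a function in $\cE^{m+3}$ of the geometric parameters.

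Finally I would aggregate: the number of nodes of the recursion tree $T$, the number of charts, the number of polynomials, and the number of blow-ups are each majorized by a function in $\cE^{m+3}$ by Corollary~\ref{total}, so the total number of elementary operations is likewise in $\cE^{m+3}$. Multiplying by the per-operation cost $b^{O(1)}\cdot (\text{something in }\cE^{m+3})$ and using again closure under product and composition, the total bit-complexity is bounded by $b^{O(1)}\cdot \cF^{(m)}(d_0,n_0,l_0,q_0,\mu)$ with $\cF^{(m)}\in \cE^{m+3}$. The main obstacle, and the step that requires the most care, is item~(iv): one must verify that the doubly-exponential Gröbner-basis/Nullstellensatz cost, although large, remains polynomial in~$b$ and lies inside the Grzegorczyk class already occupied by the other parameters; because $\cE^{m+3}$ with $m\geq 0$ already contains towers of exponentials, this fits inside $\cE^{m+3}$, and the separation of the $b$-dependence from the geometric dependence is what produces the clean factored bound $b^{O(1)}\cdot \cF^{(m)}$.
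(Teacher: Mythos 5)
Your proposal is correct and follows essentially the same route as the paper: it uses Corollary~\ref{total} to bound the number of arithmetic (linear-algebra) subroutine operations by a function in $\cE^{m+3}$, observes that coefficient bit-sizes grow only to $b$ times an $\cE^{m+3}$ factor along the run, and multiplies by the polynomial per-operation cost, invoking closure of Grzegorczyk classes under composition. Your version merely spells out in more detail (enumeration of subroutines, the effective Nullstellensatz step) what the paper's proof states tersely.
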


{\bf Proof.} Indeed, each step of the algorithm consists of solving a certain subroutine (basically,
solving a linear system) over the coefficients of the current polynomials. Therefore,
Corollary~\ref{total} provides a bound on the number of arithmetic operations with the
coefficients of the current polynomials (providing a function from class $\cE^{m+3}$). On the
other hand, all the coefficients of the polynomials for the next step are obtained as
results of these arithmetic operations, so the bit sizes of the coefficients and the complexity   grow by at most
$b\cdot \cF^{(m)}$, for a function $\cF^{(m)}={\mathcal G}(\Gamma^{(m)}(0,m,d_0,n_0,l_0,q_0,\mu))\in \cE^{m+3}$ with a suitable function $\mathcal G \in \cE^3$.  
The cost of a single
arithmetic operation is obviously polynomial.

As a corollary we obtain the following theorem.

\begin{theorem} \label{main3}
When either
\begin{enumerate}
\item resolving singularities
of $X\subset \CC^{n_0}$, 
\item[{\it or} (2)] principalizing an ideal sheaf $\cI$ on  a nonsingular $X\subset \CC^{n_0}$,
\end{enumerate}
by the Hironaka algorithm, the degree $d$, the number $l$ of the polynomials occurring, the embedding
dimension $n$, the
number $r$ of the blow-ups, and the number $q$ of the affine
neighborhoods satisfy, for fixed $m=\dim\,X$,  the recursive equalities above and are 
majorized by a function
$$(r,m,d,n,l,q,1):= {\Gamma^{(m)}}(0,m,d_0,n_0,l_0,q_0,1)\in \cE^{m+3},$$\noindent for the initial values $d=d_0$,  $n=n_0$,  $l=l_0$,  $q=q_0$.

The complexity of the algorithm is bounded by $$b^{O(1)}\cdot \cF^{(m)}(d_0,n_0,l_0,q_0,\mu),$$ for a certain function $\cF^{(m)}(d_0,n_0,l_0,q_0,\mu)\in \cE^{m+3}$.
\end{theorem}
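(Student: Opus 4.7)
The plan is to derive Theorem~\ref{main3} directly from Theorem~\ref{main2} via the Hironaka resolution principle (the chain of implications stated in Section~3.3). I will treat principalization first, then desingularization, and finally verify that the complexity estimates transfer without escaping class $\cE^{m+3}$.

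First, for principalization of a coherent sheaf $\cI$ on a smooth $X \subset \CC^{n_0}$, I would invoke the implication (1)$\Rightarrow$(2) of the Hironaka principle: a resolution of the marked ideal $(X,\cI,\emptyset,1)$ is precisely a principalization of $\cI$. Hence the algorithm runs by constructing the affine marked ideal $\cT_0$ with initial parameters $(m,d_0,n_0,l_0,q_0,1)$ described in Step~A/B of \S4.1.3, and applies the resolution procedure of Section~3. The bounds on $r, d, n, l, q$ therefore come from Corollary~\ref{total} applied with $\mu=1$: each parameter is majorized by the corresponding coordinate of $\Gamma^{(m)}(0,m,d_0,n_0,l_0,q_0,1)\in \cE^{m+3}$, and the arithmetic complexity is bounded by $b^{O(1)}\cdot \cF^{(m)}(d_0,n_0,l_0,q_0,1)$ by Theorem~\ref{main2}.

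Next, for resolution of singularities of a possibly singular $Y\subset X = \CC^{n_0}$, I would use the implication (2)$\Rightarrow$(3): applying the principalization algorithm to $\cI_Y$ until the strict transform of $Y$ first appears as a blow-up center. Following Step~C of \S4.1.2, one fixes a smooth point $p \in Y$ and halts the algorithm at the first blow-up whose center contains the inverse image of $p$; the unique irreducible component of this center through the preimage of $p$ yields the local resolution $\tilde Y$. Since this is obtained by running (a prefix of) the principalization algorithm for $(X, \cI_Y, \emptyset, 1)$, every parameter is again bounded by the coordinates of $\Gamma^{(m)}(0,m,d_0,n_0,l_0,q_0,1)$. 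Equivariance and canonicity of the construction (Theorems~\ref{th: 1}--\ref{th: 3}) ensure that these local pieces patch without further blow-ups, so no extra complexity is incurred beyond the principalization run.

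Finally, for the complexity bound itself, the key observation is that both applications above are \emph{instances} of the resolution-of-marked-ideals algorithm with $\mu=1$. Theorem~\ref{main2} already provides a complexity estimate $b^{O(1)}\cdot \cF^{(m)}(d_0,n_0,l_0,q_0,\mu) \in \cE^{m+3}$ for that algorithm, and the class $\cE^{m+3}$ is closed under composition with elementary functions, so restricting to $\mu=1$ keeps us in the same class. The only additional bookkeeping (choosing a smooth point $p$, identifying the irreducible component of the first center through it) amounts to finitely many polynomial-time tests and therefore contributes no more than a polynomial factor in $b$. Combining these observations yields both the structural bounds on $(r,d,n,l,q)$ and the complexity bound $b^{O(1)}\cdot\cF^{(m)}$ claimed.

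The step that I expect to require the most care is not any single estimate but the verification that the algorithm’s behavior under the ``halt when $Y$ becomes smooth'' rule of Step~C does not destroy the recursive structure used in Corollary~\ref{total}: one must check that the early termination simply truncates the resolution tree $T$ of \S6.2 and therefore only improves the bounds; nothing in the inductive construction of $\Gamma^{(m)}$ depends on actually reaching the leaves labeled $M$.
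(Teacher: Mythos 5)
Your proposal is correct and follows essentially the same route as the paper: the paper states Theorem~\ref{main3} simply as a corollary of Theorem~\ref{main2} and Corollary~\ref{total}, obtained by running the marked-ideal resolution algorithm with $\mu=1$ via the setups of \S4.1.2--4.1.3 (principalization as resolution of $(X,\cI,\emptyset,1)$, desingularization via the Step~C stopping rule), which is exactly what you spell out, including the harmless truncation of the resolution tree.
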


\begin{remark}
In the proof above, we gave a more explicit form of $\cF^{(m)}$ providing additional information
on its dependance on $r,d,n,l,q,\mu$. But the main consequence of the Theorem is that $m=
\dim\,X$ provides the most significant contribution to the complexity bound.
\end{remark}
\bigskip

\section{Appendix. Applications to positive characteristic.}

Define $D(m,d_0,n_0,l_0,q_0,\mu)$ and $N(m,d_0,n_0,l_0,q_0,\mu)$ to be the 
$d-$ and $n-$coordinates of  \\ ${\Gamma^{(m)}}(0,m,d_0,n_0,l_0,q_0,\mu)$
(in future considerations we drop subscripts in the above presentations).

Observe that  we can count the maximal order of all ideals occuring in single neighborhoods representing our data; that is, the marked ideal $(X,\cI,E,\mu)$ embedded in the affine space $\bA^n=\CC^n$ .

It follows from Lemma \ref{boundM} that this  order does not exceed
$$M(\bar{D},\bar{N})=M(m,d,n,l,\mu),$$ where $\bar{D}(m,d,n,l,\mu):={D}(n,d,n,l,1,1,1,\mu)$ and $\bar{N}(m,d,n,l,\mu):=N(n,d,n,l,1,1,1,\mu)$ are  functions in $(m,d,n,l,\mu)$. 
By the construction all the functions above are in $\cE^{n+3}$.

This gives us some rough control of the algorithm in characteristic $p$.

We can define $M(X,\cI,E,\mu, \bA^n):=M(m,d,n,l,\mu)$, where $(X,\cI,E,\mu)$ is a marked ideal
described  in some open subset $U$  of $\bA^n$ by $l$ polynomials of degree less than $n$.

Set  $M(X,\cI,E,\mu,x)$ to be the minimum of $M(X',\cI',E',\mu, \bA^{n'}),$ over all the affine neighborhoods $X'\subset X$ of $x$, embedded into possible affine spaces $\bA^{n'}$.
Note that $X'$ is locally closed and can always be shrunk so the data depend 
only on the stalk of the marked  ideal $\cI$ at $x \in X$. 
Similarly $\bar{M}(X,\cI,E,\mu,x)$ is the minimum of all the ${M}(X',\cI',E',\mu, x')$, for which there exists 
$(X'',\cI'',E'',\mu, x'')$ with \'etale morphisms $(X'',\cI'',E'',x'')\to (X,\cI,E, x)$ and $(X'',\cI'',E'', x'')\to (X',\cI',E',x')$.

Note that the function $\bar{M}(X,\cI,E,\mu,x)$ is  defined in any 
characteristic.
Moreover, we have:
\begin{lemma}
 The function $\bar{M}(X,\cI,E,\mu,x)$ has the following properties:
\begin{enumerate}
\item If $X\subset \bA^n$ is a locally closed subset of $\bA^n$ described in some open subset $U$  of $\bA^n$ by $l$ polynomials of degree less than $n$, and $x\in U$, then  $\bar{M}(X,\cI,x)\leq M(d,n,l)$.

\item If $(X',\cI',E',x')\to (X,\cI,E,x)$ is \'etale then $\bar{M}(X,\cI,E,\mu,x)=\bar{M}(X',\cI',E',\mu,x')$.
\item  The function $$\bar{M}(X,\cI,E,\mu):=\max_{x\in X}\{\bar{M}(X,\cI,E,\mu,x)\}$$
is well defined.

\item  Consider the canonical   Hironaka resolution of $(X, \cI, E,\mu)$. 
The multiplicities of the marked ideals occuring do not exceed $\bar{M}(X,\cI,E,\mu)$.
\end{enumerate}
\end{lemma}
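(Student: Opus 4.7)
The plan is to treat parts (1)--(3) as bookkeeping arguments that essentially unwind the definition of $\bar{M}$, and to treat part (4) as the substantive content where the complexity analysis of Sections 4--6 is invoked. For (1), I would take the trivial \'etale pair $(X'',\cI'',E'',x'') = (X,\cI,E,x)$ with identity morphisms to both sides in the definition of $\bar{M}$; the defining infimum is then at most $M(X,\cI,E,\mu,\bA^n)$, and the latter is bounded by $M(d,n,l,\mu)$ by Lemma~\ref{boundM} applied to the given affine description.

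For (2), given an \'etale $\phi:(X',\cI',E',x')\to(X,\cI,E,x)$, both inequalities between $\bar M(X,\cI,E,\mu,x)$ and $\bar M(X',\cI',E',\mu,x')$ follow by fibre-product constructions: any chain $(X',x')\leftarrow (X'',x'')\to (Y,y)$ realizing the infimum for $X'$ composes with $\phi$ to give a chain for $X$, while any chain for $X$ can be pulled back along $\phi$ via the fibre product $X''\times_X X'$, producing a chain for $X'$. For (3), combining (1) and (2) shows that $\bar M$ is locally bounded and \'etale-invariant; upper semicontinuity of the relevant multiplicity data (as in Lemma~\ref{le: Vi1}) together with Noetherianness of $X$ then guarantees that the maximum is attained.

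For (4), which is the main content, I would fix a point $x\in X$ and an \'etale affine neighborhood $(X',\cI',E',\mu,x')\to(X,\cI,E,x)$ realizing the infimum, so that $\bar M(X,\cI,E,\mu,x) = M(X',\cI',E',\mu,\bA^{n'})$ for affine data $(m,d,n,l,\mu)$. Since the canonical Hironaka resolution commutes with \'etale morphisms (Theorem~\ref{main}(2)), pulling back the canonical resolution of $(X,\cI,E,\mu)$ via this neighborhood yields an extension of the canonical resolution of $(X',\cI',E',\mu)$. By Corollary~\ref{total}, at every intermediate stage $i$ of the latter the data of $(\cI'_i,\mu_i)$ are bounded by $\Gamma^{(m)}(0,m,d,n,l,q,\mu)$, so its degree and embedding dimension are bounded by $\bar D(m,d,n,l,\mu)$ and $\bar N(m,d,n,l,\mu)$ respectively. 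Applying Lemma~\ref{boundM} to $(\cI'_i,\mu_i)$ at any point over $x$ bounds the multiplicity by $M(\bar D(m,d,n,l,\mu),\bar N(m,d,n,l,\mu)) = M(X',\cI',E',\mu,\bA^{n'})$. Taking the maximum over $x\in X$ yields the claim.

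The main obstacle is the bookkeeping in (4): one must check that the \'etale-local lift of the algorithm genuinely controls the multiplicity at a single point of $X_i$ (rather than merely some global ``worst-case'' bound), which relies on the \'etale canonicity of the algorithm recorded in Theorem~\ref{main}(2) and the fact that multiplicity is itself an \'etale-local invariant. Once this is set up, the inequality follows step-by-step from the already-established bounds $\bar D$ and $\bar N$ together with Lemma~\ref{boundM}.
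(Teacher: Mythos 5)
Your proposal is correct and follows essentially the same route as the paper, whose own proof is just the terse remark that the lemma ``follows from the definition,'' using canonicity of the Hironaka resolution under \'etale morphisms and the fact that \'etale morphisms preserve multiplicities; your parts (1)--(3) unwind the definition of $\bar M$ exactly as intended, and your part (4) spells out the same mechanism (Theorem \ref{main}(2), the degree/embedding-dimension bounds $\bar D$, $\bar N$ from Corollary \ref{total}, and Lemma \ref{boundM}) that the paper relies on implicitly and repeats in the proof of Theorem \ref{main4}. The only cosmetic differences are that $M(X,\cI,E,\mu,\bA^n)=M(m,d,n,l,\mu)$ holds by definition rather than by a separate appeal to Lemma \ref{boundM}, and in (3) the semicontinuity remark is unnecessary since an integer-valued function bounded above by quasi-compactness automatically attains its maximum.
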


\begin{proof} Follows from the definition. We use here also the canonicity of Hironaka resolution and the fact  that  \'etale morphisms preserve multiplicities.
\end{proof}

This implies the following.

\begin{theorem} There exists a canonical Hironaka resolution of marked ideals 
for which
$$\bar{M}(X,\cI,E,\mu)<p.$$
\end{theorem}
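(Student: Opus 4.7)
The plan is to revisit the characteristic-zero algorithm of Theorem \ref{main} and check that every step continues to work over a field of characteristic $p$ once we know that every multiplicity encountered throughout the recursion is strictly less than $p$. By part (4) of the preceding lemma, the multiplicities $\mu_i$ of all the marked ideals $(X_i,\cI_i,E_i,\mu_i)$ produced by the canonical recursion of Theorem \ref{main} are bounded by $\bar M(X,\cI,E,\mu)$, which is assumed to be $<p$. So I only need to show that the construction of each of those $(X_i,\cI_i,E_i,\mu_i)$, together with its associated blow-up, is insensitive to the characteristic as soon as all factorials $n!$ with $n<p$ that appear as denominators are units.

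First I would check that the purely formal pieces of the algorithm are defined over an arbitrary field. The controlled transform (Lemma \ref{bb}), the derivative ideals $\cD^i(\cI)$ (\S\ref{der}), the arithmetic operations on marked ideals (Lemma \ref{le: operations}), the companion ideal $O(\cI,\mu)$, the homogenized ideal $\cH(\cI)$ (\S\ref{hom}), the coefficient ideal $\cC(\cI,\mu)$ (\S\ref{Coefficient}), the \'etale Glueing Lemma \ref{le: homo}, as well as the combinatorial procedure of Step 2b, are all given by universal polynomial formulas and do not rely on $\cha K=0$. The only place where characteristic zero is genuinely used is the existence of a tangent direction, Lemma \ref{le: Gi}.

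The main obstacle, and where I would therefore focus, is the following point. Given $f\in\cI_x$ with $\ord_x(f)=\mu$ and parameters $u_1,\ldots,u_n$ at $x$, one expands $f=\sum_\alpha c_\alpha u^\alpha$, picks a monomial $c_{\alpha_0}u^{\alpha_0}$ with $|\alpha_0|=\mu$ and $c_{\alpha_0}(x)\neq 0$, and produces a tangent direction as an appropriate iterated partial derivative $\partial^{\mu-1}f/\partial u^{\alpha'}$ with $|\alpha'|=\mu-1<\alpha_0$; this derivative equals, up to the multinomial integer $\alpha_0!/(\alpha_0-\alpha')!$, a function of order one. Under the hypothesis $\mu\leq \bar M(X,\cI,E,\mu)<p$ all such multinomial coefficients are units in the base field, so the construction still yields a function $u\in\cD^{\mu-1}(\cI)_x$ of order one, which in turn determines a smooth hypersurface of maximal contact just as in characteristic zero. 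The same bound guarantees the validity of the derivative computations inside the proofs of Lemmas \ref{le: inclusions}, \ref{Vi2}, \ref{Vi3} and of the homogenization Lemma \ref{le: homo0}.

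To conclude, I would then run Theorem \ref{main} verbatim, observing that at each node of the recursion tree the current multiplicity is at most $\bar M(X,\cI,E,\mu)<p$ (by the preceding lemma and the fact that the algorithm only lowers the multiplicities or replaces them by their bounded Grzegorczyk images, cf.\ Lemma \ref{multiplicity}, which also respect this bound since every $(\mu\cdot M(d,n))!$ occurring during the algorithm is still $<p$ after passing to a subproblem where the order has already dropped). Hence all the steps of Section \ref{Resolution algorithm} carry out over the given ground field, Steps 1a/1b/2a/2b succeed in order, and the output is a canonical resolution with the functoriality properties listed in Theorem \ref{main}. The main subtlety to be careful about is that the intermediate quantities $\cJ=\cC(\cH(O(\cI)))$ formally involve multiplicities as large as $(\mu\cdot M(d,n))!$ (Lemma \ref{multiplicity}); the bound $\bar M$ in the statement must be understood to control exactly these enlarged multiplicities that appear in the canonical algorithm, which is precisely the content of the preceding lemma.
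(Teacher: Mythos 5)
Your proposal is correct and follows essentially the same route as the paper: run the characteristic-zero algorithm unchanged, note that the only characteristic-sensitive ingredients are the derivative-based lemmas (existence of tangent directions/maximal contact, homogenization, coefficient ideals), and use the bound $\bar{M}(X,\cI,E,\mu)<p$ together with part (4) of the preceding lemma to guarantee that every multiplicity encountered keeps those lemmas valid. Your extra observation that the relevant multinomial coefficients are units when $\mu<p$ is exactly the content behind the paper's phrase ``with proofs unchanged,'' so no gap remains.
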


\begin{proof}
The algorithm is the same as in characteristic zero. We just need minor modifications of the basic results.
 We replace the hypothesis of characteristic zero with the one that the multiplicity $\mu$ in the relevant marked ideals is less than characteristic $p$ in Lemmas  \ref{le: Vi1}, \ref{Vi2}, \ref{Vi3}, \ref{le: Gi}, \ref{le: homo0},  \ref{le: homo}, \ref{le: S}, with proofs unchanged. 

Theorem \ref{main} is replaced with the following.

\begin{theorem}\label{main4} Assume the characteristic of base field is $p$. 
For any marked ideal $(X,{\cI},E,\mu)$ such that $\bar{M}(X,\cI,E,\mu)<p$,
there is an associated  resolution $(X_i)_{0\leq i\leq m_X}$, 
called \underline{canonical},
 satisfying the following conditions:
\begin{enumerate}
\item For any  surjective \'etale morphism $\phi: X '\to X$, the induced sequence
 $(X'_i)=\phi^*(X_i)$ is  the canonical resolution of $(X',{\cI}',E',\mu):=\phi^*(X,{\cI},E,\mu)$.
\item For any  \'etale morphism $\phi: M'\to M$, the induced sequence
 $(X'_i)=\phi^*(X_{i})$ is an extension   of the canonical resolution of  $(X',{\cI}',E',\mu):=\phi^*(X,{\cI},E,\mu)$.
\end{enumerate}
\end{theorem}
 \begin{proof} The proof is the same as before. We apply the same algorithm as 
in characteristic zero. The multiplicities of the marked ideals occuring
do not exceed $\bar{M}(X,\cI,E,\mu)<p$.
Indeed, it suffices to  verify this for a marked ideal in a neighborhood of 
some point $x\in X$.
We can find a marked ideal $(X',\cI',E',\mu)$ which is  \'etale equivalent to
 $(X,\cI,E,\mu)$ in a neighborhood of $x \in X$, and such that  
${M}(X',\cI,E',\mu)={M}(X',\cI,E',\mu,x')=\bar{M}(X,\cI,E,\mu,x)$. The 
algorithm for $(X',\cI,E',\mu,x')$ is identical to that in characteristic zero 
since it will not create marked ideals of the multiplicities greater than $p$. 
The algorithm for $(X,\cI,E,\mu,x)$ by the modified basic results above will 
create marked ideals with the same multiplicities.
\end{proof}
\end{proof}

\begin{remark} Note that the resolution is canonical and it is $G$-equivariant 
with respect to the action of any group $G$. Thus existence of a resolution 
over an algebraically closed field implies a resolution over any perfect field 
$K$. The reasoning is the same as in characteristic zero.
Consider the action of the Galois group $G:=Gal(\overline{K}/K)$. By the above,
all the centers are $G$-stable so they are defined over $K=\overline{K}^G$. 
(See, for instance, \cite{Wlod} for details.)
\end{remark}

\begin{corollary} Assume the base field is perfect of characteristic $p$.
There exists a function $M(d,n,l):=M(n,d,n,l,1) \in  \cE^{n+3}$   
(independent of characteristic) such that
for all $Y\subset \bA^n$ described  by $l$ polynomials of degree less than 
$n$, for which $M(d,n,l)<p$, there is a canonical (embedded) resolution of singularities.
\end{corollary}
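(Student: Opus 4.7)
The plan is to reduce the corollary to the immediately preceding Theorem~\ref{main4} by exhibiting a single function $M(d,n,l)\in\cE^{n+3}$ that uniformly dominates $\bar{M}(X,\cI,E,\mu)$ for every marked ideal arising in the canonical algorithm started from the given input, and then invoking Galois descent to pass from the algebraic closure $\overline{K}$ to a perfect field $K$.

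First I would set up the initial object: given $Y\subset \bA^n$ cut out by $l$ polynomials of degree $\leq d$, form the marked ideal $(\bA^n,\cI_Y,\emptyset,1)$ with data $(m,d,n,l,q,\mu)=(n,d,n,l,1,1)$, and define
$$M(d,n,l) := M\bigl(\bar D(n,d,n,l,1),\, \bar N(n,d,n,l,1)\bigr),$$
where the outer $M(\cdot,\cdot)$ is the multiplicity bound of Lemma~\ref{boundM} and $\bar D,\bar N$ are the degree and embedding-dimension coordinates extracted from $\Gamma^{(n)}$, as already introduced at the start of the appendix. Since $\bar D,\bar N\in\cE^{n+3}$ by Theorem~\ref{main3} and the outer $M(d,n)=d^{O(n^2)}\in\cE^3$ by Lemma~\ref{boundM}, composition places $M(d,n,l)$ in $\cE^{n+3}$.

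Second, I would verify the key inequality $\bar M(\bA^n,\cI_Y,\emptyset,1)\leq M(d,n,l)$. Because $\bar M$ is \'etale-invariant (property~(2) of the lemma preceding Theorem~\ref{main4}), it suffices to work on the given affine representative. The crucial point is that the recursion producing $\Gamma^{(n)}$ in Section~6 was constructed to dominate the degrees and ambient embedding dimensions of \emph{every} intermediate marked ideal that the canonical algorithm can manufacture --- companion ideals, homogenized ideals, coefficient ideals, restrictions to maximal contact, and \'etale refinements of neighborhoods. Applying Lemma~\ref{boundM} with the worst-case parameters $(\bar D,\bar N)$ therefore bounds the pointwise multiplicity of each such intermediate ideal by $M(d,n,l)$, and taking the maximum over the finite resolution tree yields the claimed global bound.

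With this bound in hand, the hypothesis $M(d,n,l)<p$ forces $\bar M(\bA^n,\cI_Y,\emptyset,1)<p$, so Theorem~\ref{main4} applies and produces the canonical Hironaka resolution of $(\bA^n,\cI_Y,\emptyset,1)$ over $\overline{K}$; by the Hironaka resolution principle of Section~2.3, this gives an embedded desingularization of $Y$. Descent to the perfect ground field $K$ is then automatic from $\mathrm{Gal}(\overline{K}/K)$-equivariance of the canonical construction (all centers are Galois-stable and hence defined over $K=\overline K^G$), exactly as in the Remark immediately preceding the corollary. The main obstacle I expect is the bookkeeping in the second step: one must check carefully that the recursive definition of $\Gamma^{(n)}$ was genuinely set up to cover every auxiliary marked ideal produced by the algorithm and not merely the initial one --- this is precisely what the explicit recursion in Section~6 encodes, but the verification requires matching each operation in the algorithm (Steps~1a, 1b, 2a, 2b) against the corresponding coordinate update in $\Gamma^{(n)}$.
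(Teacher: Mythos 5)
Your proposal is correct and follows essentially the route the paper intends (the corollary's proof is left implicit there): define $M(d,n,l)=M(n,d,n,l,1)$ by composing the multiplicity bound of Lemma~\ref{boundM} with the degree and embedding-dimension coordinates $\bar{D},\bar{N}$ of $\Gamma^{(n)}$ (hence $M(d,n,l)\in\cE^{n+3}$), observe $\bar{M}(\bA^n,\cI_Y,\emptyset,1)\le M(d,n,l)$ via the lemma preceding Theorem~\ref{main4}, apply Theorem~\ref{main4} together with the resolution principle to get the embedded resolution over $\overline{K}$, and descend to the perfect field by the Galois-equivariance remark. Your cautionary note that one must check $\Gamma^{(n)}$ really dominates the data of all intermediate objects is exactly the content the paper encodes in Corollary~\ref{total} and the appendix lemma, so no new argument is needed beyond what you outline.
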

\begin{remark} Note that in the above formulation the ambient nonsingular variety is $X=\bA^n$. That is why we put here $m=n=\dim(X)$.
\end{remark}
\begin{corollary} There is a canonical principlization  of ideals $\cI$ in 
$\bA^n$ described  by $l$ polynomials of degree less than $n$, for which
$M(d,n,l)<p$. 
\end{corollary}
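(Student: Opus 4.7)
The plan is to apply Theorem~\ref{main4} to the marked ideal $(\bA^n,\cI,\emptyset,1)$ associated to the given sheaf of ideals, and then to invoke the Hironaka resolution principle $(1)\Rightarrow(2)$ from Section~2.3 to convert the resulting resolution into a principalization of $\cI$. This is the same strategy that underlies the passage from Theorem~\ref{th: 1} to principalization in characteristic zero, so the only new ingredient to verify is that the hypothesis of Theorem~\ref{main4}, namely $\bar{M}(X,\cI,E,\mu)<p$, is satisfied by every marked ideal produced by the algorithm when $M(d,n,l)<p$.

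To verify this, I would first observe that, by Corollary~\ref{total} applied to the initial data $(0,n,d,n,l,1,1)$, every marked ideal appearing in the recursive execution of the algorithm on $(\bA^n,\cI,\emptyset,1)$ lives in an ambient affine space of dimension bounded by $\bar{N}(n,d,n,l,1)$ and is presented by polynomials of degree bounded by $\bar{D}(n,d,n,l,1)$; this includes the coefficient, companion, and homogenized ideals introduced inside Steps~1a, 1b and~2a, whose degree and dimension growth are accounted for in $\Delta_{2a}$ and $\Delta_1$ via Lemmas~\ref{2a1} and~\ref{1ab} together with Corollary~\ref{number}. Combining with Lemma~\ref{boundM}, which bounds the maximal multiplicity of a marked ideal on a subvariety by a function of its degree and embedding dimension, the pointwise multiplicity of every intermediate marked ideal is at most $M(\bar{D},\bar{N})=M(d,n,l)$.

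Consequently, the hypothesis $M(d,n,l)<p$ propagates down every branch of the resolution tree: every marked ideal that the algorithm hands to an inner recursive call satisfies $\bar{M}(\cdot)<p$, so Theorem~\ref{main4} applies uniformly and the canonical characteristic-zero algorithm runs without modification. The resulting canonical resolution $(X_i)$ of $(\bA^n,\cI,\emptyset,1)$ yields a canonical principalization of $\cI$: the final controlled transform has empty support, hence equals $\cO_{\widetilde{X}}$, and therefore the total transform of $\cI$ is the sheaf of a simple normal crossings divisor supported on exceptional components. Canonicity and $G$-equivariance descend from those of Theorem~\ref{main4} verbatim, as in the remark preceding the previous corollary. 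The one subtle point---and the only real obstacle---is that the degree and dimension bounds $\bar{D},\bar{N}$ must majorize the sizes of \emph{all} auxiliary ideals (not merely the initial input), but this is precisely what is guaranteed by the recursive definition of $\Gamma^{(m)}$ in Section~6, so no additional estimates are required.
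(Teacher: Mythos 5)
Your proposal is correct and follows exactly the route the paper intends: the corollary is stated as an immediate consequence of Theorem~\ref{main4} (via the appendix lemma bounding the multiplicities of all marked ideals occurring in the algorithm by $M(d,n,l)=M(n,d,n,l,1)$, itself built from Lemma~\ref{boundM} and the $d$- and $n$-coordinates of $\Gamma^{(m)}$), combined with the resolution-implies-principalization principle of Section~2.3 applied to $(\bA^n,\cI,\emptyset,1)$ with $m=n$. Your verification that the bound propagates to all auxiliary (companion, homogenized, coefficient) ideals is precisely what the paper's definitions of $\bar{D}$, $\bar{N}$ and $\bar{M}$ are designed to encapsulate, so no genuinely different argument is involved.
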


\end{document}